\newtheorem{thm}{Theorem}[section]
\newtheorem{defn}[thm]{Definition}
\newtheorem{lem}[thm]{Lemma}
\newtheorem{cor}[thm]{Corollary}
\newtheorem{prop}[thm]{Proposition}
\newtheorem{rem}[thm]{Remark}
\newcommand{\be}{\begin{equation}}
	\newcommand{\ee}{\end{equation}}
\newcommand{\bea}{\begin{eqnarray}}
	\newcommand{\eea}{\end{eqnarray}}
\newcommand{\ben}{\begin{eqnarray*}}
	\newcommand{\een}{\end{eqnarray*}}
\newcommand{\bt}{\begin{split}}
	\newcommand{\et}{\end{split}}
\newcommand{\bet}{\begin{equation}}
\begin{document}
		\title[Range of Complex Monge-Amp\`ere operators]{On the Range of a class of  Complex Monge-Amp\`ere  operators on compact Hermitian manifolds}
		
		%
		\author[Y. Li]{Yinji Li}
		\address{Yinji Li:  Institute of Mathematics\\Academy of Mathematics and Systems Sciences\\Chinese Academy of
			Sciences\\Beijing\\100190\\P. R. China}
		\email{1141287853@qq.com}
		\author[Z. Wang]{Zhiwei Wang}
		\address{Zhiwei Wang: Laboratory of Mathematics and Complex Systems (Ministry of Education)\\ School of Mathematical Sciences\\ Beijing Normal University\\ Beijing 100875\\ P. R. China}
		\email{zhiwei@bnu.edu.cn}
		\author[X. Zhou]{Xiangyu Zhou}
		\address{Xiangyu Zhou: Institute of Mathematics\\Academy of Mathematics and Systems Sciences\\and Hua Loo-Keng Key
			Laboratory of Mathematics\\Chinese Academy of
			Sciences\\Beijing\\100190\\P. R. China}
		\address{School of
			Mathematical Sciences, University of Chinese Academy of Sciences,
			Beijing 100049, P. R. China}
		\email{xyzhou@math.ac.cn}
		
		\begin{abstract}
			
			Let $(X,\omega)$ be a compact Hermitian manifold of complex dimension $n$. Let $\beta$ be a smooth real closed $(1,1)$ form such that there exists a function $\rho \in \mbox{PSH}(X,\beta)\cap L^{\infty}(X)$. We study the range of the complex non-pluripolar Monge-Amp\`ere operator $\langle(\beta+dd^c\cdot)^n\rangle$ on weighted Monge-Amp\`ere energy classes on $X$. In particular, when $\rho$ is assumed to be continuous,  we give a complete characterization of the range of the complex Monge-Amp\`ere operator on the class $\mathcal E(X,\beta)$, which is the class of all $\varphi \in \mbox{PSH}(X,\beta)$ with full Monge-Amp\`ere mass, i.e. $\int_X\langle (\beta+dd^c\varphi)^n\rangle=\int_X\beta^n$.
		\end{abstract}
		
		\thanks{}

		\maketitle
\section{Introduction}

Let $(X, \omega)$ be a compact Hermitian manifold of complex dimension $n$, with a Hermitian metric $\omega$. Let $\beta$ be a smooth real closed $(1,1)$ form. A function $u:X\rightarrow [-\infty,+\infty)$ is called quasi-plurisubharmonic if locally $u$ can be written as the sum of a smooth function and a plurisubharmonic function. A $\beta$-plurisubharmonic ($\beta$-psh for short) function $u$ is defined as a quasi-plurisubharmonic function satisfying $\beta+dd^cu\geq 0$ in the sense of currents. The set of all $\beta$-psh functions on $X$ is denoted by $\mbox{PSH}(X,\beta)$.

Suppose that there exists a function $\rho\in \mbox{PSH}(X,\beta)\cap L^{\infty}(X)$. We define the $(\beta+dd^c\rho)$-psh function to be a function $v$ such that $\rho+v\in \mbox{PSH}(X,\beta)$, and denote by $\mbox{PSH}(X,\beta+dd^c\rho)$ the set of all $(\beta+dd^c\rho)$-psh functions.  In \cite{LWZ23}, the authors solve the following  degenerate complex Monge-Amp\`ere equation 
	\[(\beta+dd^c\varphi)^n=f\omega^n,\]
with $\varphi\in \mbox{PSH}(X,\beta)\cap L^\infty(X)$ and  $0\leq f\in L^p(X,\omega^n)$, $p>1$, such that  $\int_Xf\omega^n=\int_X\beta^n$.  

Building on this result, we further investigate the complex non-pluripolar Monge-Ampère equation:$$\langle (\beta+dd^c\varphi)^n\rangle=\mu, \quad \varphi \in \mbox{PSH}(X,\beta),$$ where $\mu$ is a positive non-pluripolar Radon measure on $X$, and $\langle (\beta+dd^c\varphi)^n\rangle$ is the non-pluripolar product (see \cite{BEGZ10}).

When the manifold $X$ is assumed to be compact K\"ahler and $\beta$ is assumed to be a K\"ahler metric or in a semi-positive big class, the complex Monge-Amp\`ere equations mentioned above have been extensively studied in recent years. These equations have important applications in K\"ahler geometry (e.g., see \cite{GZ07,EGZ09,BEGZ10,EGZ11,BBGZ13,EGZ17} and references therein) as well as in complex dynamics (see \cite{Sib99}).

To state our results, we introduce some notions. Let $A$ and $a$ be positive constants. For a positive Radon measure $\mu$ on $X$, we say $\mu\in \mathcal{H}(A,a,\beta)$ if for every Borel set $E$,
$$\mu(E)\leq A \text{Cap}_{\beta}(E)^a,$$
where
\[\text{Cap}_{\beta}(E):=\sup\left\{\int_E(\beta+dd^cv)^n:v\in \text{PSH}(X,\beta),\rho \leq v \leq \rho+1 \right\}.\]
By definition, if $\mu\in \mathcal H(A,a,\beta)$, then $\mu$ is non-pluripolar.

We denote by $\mathcal E(X,\beta)$ the class of all $\varphi \in \text{PSH}(X,\beta)$ with full Monge-Amp\`ere mass, i.e.,
$\int_X\langle (\beta+dd^c\varphi)^n\rangle=\int_X\beta^n$. We also denote by $\mathcal E^p(X,\beta)$ the class of all $\varphi\in \mathcal E(X,\beta)$ such that $\varphi\in L^p(\langle (\beta+dd^c\varphi)^n\rangle)$.

First, we study the range of the complex non-pluripolar Monge-Amp\`ere operator $\langle(\beta+dd^c\cdot)^n\rangle$ on the class $\mathcal E^p(X,\beta)$.

\begin{thm}\label{thm: main 1-1}
	Let $(X,\omega)$ be a compact Hermitian manifold of complex dimension $n$. Let $\{\beta\}\in H^{1,1}(X,\mathbb R)$ be a real $(1,1)$-class with a smooth representative $\beta$. Assume that there is a bounded function $\rho\in \text{PSH}(X,\beta)$. Let $p\geq1$, $a>\frac{p}{p+1}$, and let $\mu\in \mathcal{H}(A,a,\beta)$ satisfy $\mu(X)=\int_X\beta^n=1$. Then there exists a unique $\varphi \in \mathcal{E}^p(X,\beta)$ such that
	$$\langle(\beta+dd^c\varphi)^n\rangle=\mu, \ \sup_X\varphi=0.$$
\end{thm}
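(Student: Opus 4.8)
The plan is to follow the strategy of Kolodziej and Guedj--Zeriahi, adapted to the Hermitian setting of the present paper: derive a uniform a priori estimate for solutions attached to measures in $\mathcal H(A,a,\beta)$, solve the equation first for measures with bounded density by invoking the solvability theorem of \cite{LWZ23}, and then pass to the limit along a regularizing sequence. Uniqueness is the easiest point: if $\varphi_1,\varphi_2\in\mathcal E^p(X,\beta)$ both solve $\langle(\beta+dd^c\varphi_i)^n\rangle=\mu$ with $\sup_X\varphi_i=0$, then the domination (comparison) principle in $\mathcal E(X,\beta)$ --- available here because $\beta$ is a \emph{closed} form --- forces $\varphi_1-\varphi_2$ to be constant, and the normalization makes this constant vanish.

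\emph{A priori estimate.} Let $u\in\mathrm{PSH}(X,\beta)\cap L^\infty(X)$ satisfy $\langle(\beta+dd^c u)^n\rangle=\nu$ with $\nu\in\mathcal H(A,a,\beta)$, $\nu(X)=1$, and $\sup_X u=0$. Since every competitor $v$ in the definition of $\mathrm{Cap}_\beta$ is $\beta$-psh and bounded, Stokes' theorem (using that $\beta$ is closed) gives $\int_X(\beta+dd^c v)^n=\int_X\beta^n=1$, so $\mathrm{Cap}_\beta(X)=1$. The comparison principle then yields a Chern--Levine--Nirenberg/Kolodziej-type inequality bounding $\mathrm{Cap}_\beta(\{u<-s-t\})$ by $C\,t^{-1}\nu(\{u<-s\})$ for $s,t>0$; combined with the defining bound $\nu(\{u<-s\})\le A\,\mathrm{Cap}_\beta(\{u<-s\})^{a}$ and the uniform bound $\mathrm{Cap}_\beta(\{u<-s\})\le 1$, a Kolodziej-type iteration produces polynomial decay $\mathrm{Cap}_\beta(\{u<-s\})\le C(A,a)\,s^{-q}$ for every $q<\tfrac1{1-a}$. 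The hypothesis $a>\tfrac{p}{p+1}$ is precisely what guarantees $\tfrac1{1-a}>\tfrac pa$, so that one may pick $q$ with $qa>p$ and conclude
\[
\int_X(-u)^p\,d\nu=p\int_0^\infty s^{p-1}\nu(\{u<-s\})\,ds\le pA\int_0^\infty s^{p-1}\mathrm{Cap}_\beta(\{u<-s\})^{a}\,ds\le C(A,a,p,n).
\]
The same decay bounds $\int_X(-u)\,\omega^n$ uniformly, so the set of such $u$ is relatively compact in $L^1(X)$ and does not degenerate to $-\infty$.

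\emph{Existence.} Approximate $\mu$ in the weak-$*$ topology by measures $\mu_j=f_j\,\omega^n$ with $0\le f_j$ bounded, $\mu_j(X)=\int_X\beta^n=1$, chosen (by a partition of unity together with local regularization, keeping track of the comparability between $\mathrm{Cap}_\beta$ and the Bedford--Taylor capacity) so that the domination $\mu_j(E)\le A'\,\mathrm{Cap}_\beta(E)^{a}$ holds with $A'$ independent of $j$. By the theorem of \cite{LWZ23} there is $\varphi_j\in\mathrm{PSH}(X,\beta)\cap L^\infty(X)$ with $(\beta+dd^c\varphi_j)^n=\mu_j$, and we normalize $\sup_X\varphi_j=0$; being bounded, $\varphi_j$ has full Monge--Amp\`ere mass, so $\varphi_j\in\mathcal E(X,\beta)$ and the a priori estimate applies with uniform constants: $\int_X(-\varphi_j)^p\,d\mu_j\le C$ and $\mathrm{Cap}_\beta(\{\varphi_j<-s\})\le C s^{-q}$. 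Passing to a subsequence, $\varphi_j\to\varphi$ in $L^1(X)$ and a.e., with $\varphi\in\mathrm{PSH}(X,\beta)$ and $\sup_X\varphi=0$. The uniform capacity decay forces $\varphi\in\mathcal E(X,\beta)$ and, crucially, $\langle(\beta+dd^c\varphi_j)^n\rangle\to\langle(\beta+dd^c\varphi)^n\rangle$ weakly (no mass escapes, since all the measures involved are uniformly dominated by capacity and charge no pluripolar set); hence $\langle(\beta+dd^c\varphi)^n\rangle=\mu$. Lower semicontinuity of $w\mapsto\int_X(-w)^p\,\langle(\beta+dd^cw)^n\rangle$ along this convergence, together with the uniform bound, gives $\int_X(-\varphi)^p\,d\mu<\infty$, i.e. $\varphi\in\mathcal E^p(X,\beta)$, which completes the proof.

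\emph{Main obstacle.} The two delicate points are the a priori estimate and the weak convergence of the non-pluripolar products. In the former, one must carefully track how the exponent $a$, the integrability exponent $p$, and the dimensional powers appearing in the capacity--comparison inequality combine, so that the threshold $a>\tfrac{p}{p+1}$ is exactly what closes the iteration at the correct integrability. In the latter, the non-pluripolar Monge--Amp\`ere operator is discontinuous in general, so the uniform capacity estimates of the a priori step are essential to rule out loss of mass in the limit; producing bounded-density regularizations that preserve a uniform capacity domination is the remaining technical ingredient.
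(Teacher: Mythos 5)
Your architecture (regularize $\mu$ to bounded densities, solve via the bounded theorem of \cite{LWZ23}, prove uniform a priori estimates from the capacity domination, pass to the limit) is the same as the paper's, but two of your key steps do not work as stated. First, the a priori estimate: the comparison-principle inequality you invoke, $t^{n}\,\mathrm{Cap}_\beta(\{u<-s-t\})\le \nu(\{u<-s\})$, is only available for $0<t\le 1$ (it comes from testing against convex combinations $t v+(1-t)(\cdot)$ of $\beta$-psh functions, which leave the class for $t>1$). Hence the only iterable inequality is $f(s+1)\le A\,f(s)^{a}$ with $f(s)=\mathrm{Cap}_\beta(\{u<-s\})$. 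For $a>1$ this is the classical De Giorgi--Kolodziej lemma and yields boundedness (the paper's Corollary \ref{cor: 1}); but for $a<1$ the recursion converges to a constant and in fact \emph{degrades} any polynomial decay you feed into it (an input $f(s)\le C/s$ returns only $f(s+1)\le AC^{a}s^{-a}$ with $a<1$). So the claimed decay $\mathrm{Cap}_\beta(\{u<-s\})\le Cs^{-q}$ for $q$ up to $\tfrac{1}{1-a}$ is not produced by this iteration, and with only $f(s)\lesssim s^{-1}$ the integral $\int s^{p-1}f(s)^{a}\,ds$ diverges for every $p\ge 1$. The correct mechanism (and the one the paper uses) is the energy route: Proposition \ref{prop: cap estima upper level set 2} converts finite $\chi$-energy into the decay $\mathrm{Cap}_\beta(\{v<-t\})\le C E_\chi(\rho+v)\,(-t\chi(-t))^{-1}$ via integration by parts against the mixed measures (Proposition \ref{prop: mix chi energy ineq}), and the integrability is then bootstrapped through $p_1=1$, $p_{k+1}=\tfrac{p}{p+1}(1+p_k)\nearrow p$, with a final replacement of $p$ by $p+\varepsilon$ to land in $\mathcal E^{p}$ itself. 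Your arithmetic observation that $a>\tfrac{p}{p+1}$ is equivalent to $\tfrac{1}{1-a}>\tfrac{p}{a}$ is the same threshold computation, but you have no valid derivation of the decay that feeds it.

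Second, the limit passage. You assert that $\langle(\beta+dd^c\varphi_j)^n\rangle\to\langle(\beta+dd^c\varphi)^n\rangle$ because the measures are ``uniformly dominated by capacity and charge no pluripolar set.'' That is not a proof: even for uniformly bounded potentials, $L^{1}$ convergence does not imply convergence of Monge--Amp\`ere measures, and non-charging of pluripolar sets rules out only one failure mode, not the identification of the limit. This is exactly where the paper does its real work: Lemma \ref{lem: dmu conv} proves $\int_X|v_j-v|\,d\mu\to 0$ and $\int_X|v_j-v|\,d\mu_j\to 0$ via Mazur's theorem in $L^{\delta}(\mu)$ combined with the capacity estimates, and only then does the convergence criterion of Proposition \ref{prop: energy conv} (namely $\varphi_j\to\varphi$ in $L^1$ \emph{together with} $\int_X|\varphi_j-\varphi|\langle(\beta+dd^c\varphi_j)^n\rangle\to 0$) identify the limit measure as $\mu$. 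You flag this as ``the remaining technical ingredient'' but supply no mechanism, so the existence proof does not close. Two smaller points: uniqueness in $\mathcal E(X,\beta)$ is not an immediate consequence of the comparison principle of Theorem \ref{thm: comparison prin} (which only compares masses on sublevel sets); it is a theorem in its own right, imported from \cite[\S 3.3]{BEGZ10}. And your final appeal to lower semicontinuity of $w\mapsto\int_X(-w)^p\langle(\beta+dd^cw)^n\rangle$ is only available here for \emph{convex} weights (Proposition \ref{prop: fund inequ} and Proposition \ref{prop: criterion chi ene}), whereas $t\mapsto -(-t)^p$ is concave for $p\ge1$; this is precisely why the paper first establishes membership in $\mathcal E^1$ and then bootstraps.
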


\begin{rem}
	The above theorem is a generalization of the corresponding result in \cite[Theorem 4.2, Proposition 5.3]{GZ07}, where $X$ is assumed to be K\"ahler and $\beta$ is assumed to be a K\"ahler metric on $X$.
\end{rem}

As a special case of Theorem \ref{thm: main 1-1}, let's consider the situation when $a>1$. In this case, we can obtain the $L^\infty$-estimate of the solution $\varphi$ using the method in \cite[\S 2]{EGZ09} (see also \cite[\S 3.4]{LWZ23}).

\begin{cor}\label{cor: 1}
	Let $(X,\omega)$ be a compact Hermitian manifold of complex dimension $n$. Let $\{\beta\}\in H^{1,1}(X,\mathbb R)$ be a real $(1,1)$-class with a smooth representative $\beta$. Assume that there exists a bounded function $\rho\in \mbox{PSH}(X,\beta)$. Let $a>1$ and let $\mu\in \mathcal{H}(A,a,\beta)$ such that $\mu(X)=\int_X\beta^n=1$. Then there exists a unique $\varphi\in \mbox{PSH}(X,\beta)\cap L^{\infty}(X)$ such that
		$$(\beta+dd^c\varphi)^n=\mu, \ \sup_X\varphi=0.$$
\end{cor}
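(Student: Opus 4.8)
The plan is to deduce Corollary \ref{cor: 1} directly from Theorem \ref{thm: main 1-1} together with an $L^\infty$ a priori estimate for the solution. Since $a > 1 > \frac{p}{p+1}$ for every $p \geq 1$, Theorem \ref{thm: main 1-1} already provides a unique $\varphi \in \mathcal{E}^p(X,\beta)$ with $\langle (\beta + dd^c\varphi)^n \rangle = \mu$ and $\sup_X \varphi = 0$ (take, say, $p = 1$); uniqueness in the class $\mathrm{PSH}(X,\beta) \cap L^\infty(X) \subset \mathcal{E}(X,\beta)$ is then inherited from the uniqueness already asserted in Theorem \ref{thm: main 1-1}. So the only real content to establish is that this $\varphi$ is in fact bounded, and once $\varphi \in L^\infty(X)$, the non-pluripolar product $\langle (\beta+dd^c\varphi)^n\rangle$ coincides with the Bedford--Taylor product $(\beta+dd^c\varphi)^n$, upgrading the equation to the stated form.

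First I would set up the boundedness proof along the lines of \cite[\S 2]{EGZ09}, in the Hermitian adaptation carried out in \cite[\S 3.4]{LWZ23}. The idea is a De Giorgi-type iteration on the sublevel sets. Normalizing $\sup_X \varphi = 0$, for $t > 0$ set $E(t) := \{\varphi < -t\}$ and consider the decreasing function $g(t) := \mathrm{Cap}_\beta(E(t))^{1/n}$ (or a comparable capacity-type quantity relative to $\rho$). Using the comparison principle for bounded $\beta$-psh functions — valid here because $\rho \in \mathrm{PSH}(X,\beta) \cap L^\infty(X)$ furnishes a bounded model potential — together with the hypothesis $\mu \in \mathcal{H}(A,a,\beta)$, one derives an inequality of the form $t\, g(s+t)^n \leq C\, \mu(E(s)) \leq C A\, g(s)^{na}$ for $s, t > 0$. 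Since $na > n \geq 1$, a standard lemma (the Cegrell--Kołodziej iteration lemma, as used in \cite{EGZ09}) shows that $g(s) \equiv 0$ for all $s \geq s_\infty$ for some finite $s_\infty$ depending only on $n$, $a$, $A$, and $\int_X \beta^n$; equivalently $\mathrm{Cap}_\beta(\{\varphi < -s_\infty\}) = 0$, hence $\{\varphi < -s_\infty\}$ is pluripolar. Because $\varphi \in \mathcal{E}(X,\beta)$ puts no mass on pluripolar sets and (more directly) a $\beta$-psh function bounded below outside a pluripolar set is genuinely bounded below, we conclude $\varphi \geq -s_\infty$, i.e. $\varphi \in L^\infty(X)$.

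The main obstacle is ensuring that the comparison principle and the capacity estimates go through in the \emph{Hermitian} (non-Kähler) setting, where $d\beta \neq 0$ and integration by parts produces error terms controlled by $\omega$; this is exactly the technical work done in \cite{LWZ23}, so I would invoke their Hermitian comparison principle and their version of the Kołodziej--Guedj--Zeriahi capacity--mass estimates rather than reprove them. A secondary point to check is that the relevant $\mathrm{Cap}_\beta$ here, defined with competitors squeezed between $\rho$ and $\rho+1$, is comparable (up to constants and up to bounded shifts of the potential) to the more standard Monge--Ampère capacity used in the iteration lemma, so that the hypothesis $\mu(E) \leq A\, \mathrm{Cap}_\beta(E)^a$ feeds correctly into the recursion; this comparability is elementary given $\rho$ bounded. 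With these ingredients in place, the corollary follows by combining Theorem \ref{thm: main 1-1} (existence, uniqueness, normalization) with the $L^\infty$ bound just sketched.
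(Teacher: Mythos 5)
Your overall route is exactly the paper's: the authors also obtain the corollary by combining Theorem \ref{thm: main 1-1} with the $L^\infty$ a priori estimate from the Ko\l odziej--Eyssidieux--Guedj--Zeriahi iteration, and they simply cite \cite[\S 2]{EGZ09} and its Hermitian adaptation in \cite[\S 3.4]{LWZ23} for the details rather than reproducing them.

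One step in your sketch as written does not go through. Having obtained $\mathrm{Cap}_{\beta}(\{\varphi<-s_\infty\})=0$ from the iteration lemma, you conclude that $\{\varphi<-s_\infty\}$ is pluripolar; but the implication ``$\mathrm{Cap}_{\beta}(E)=0 \Rightarrow E$ pluripolar'' is precisely condition (2) of Theorem \ref{thm: main 1-2}, which in this Hermitian setting is a genuinely nontrivial property of $\mathrm{Cap}_\beta$ and is \emph{not} among the hypotheses of the corollary. The standard (and intended) way to finish is instead via the volume--capacity estimate of Lemma \ref{lem: volume cap esti}: since $\mathrm{Vol}_\omega(E)\le C\exp\bigl(-\alpha\,\mathrm{Cap}_{\beta}(E)^{-1/n}\bigr)$, capacity zero forces $\mathrm{Vol}_\omega(\{\varphi<-s_\infty\})=0$, so $\varphi\ge -s_\infty$ Lebesgue-almost everywhere and hence everywhere by the sub-mean-value property of quasi-psh functions. (A minor further point: the comparison-principle inequality feeding the iteration should carry $t^n$ rather than $t$, i.e. $t^n\,\mathrm{Cap}_\beta(E(s+t))\le C\,\mu(E(s))$ for $0<t\le 1$; this is inessential, as $a>1$ is exactly the condition the iteration lemma needs in either form.) With that correction your argument coincides with the paper's.
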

	\begin{rem}
		The above corollary is a generalization of \cite[Theorem 2.1]{EGZ09}, where $X$ is assumed to be K\"ahler and $\beta$ is assumed to be semi-positive.
	\end{rem}
	
	Furthermore, when the potential $\rho\in \mbox{PSH}(X,\beta)$ is assumed to be continuous, we give a complete characterization of the range of the complex non-pluripolar Monge-Amp\`ere operator $\langle(\beta+dd^c\cdot)^n\rangle$ on the class $\mathcal E(X,\beta)$.
	
	\begin{thm}\label{thm: main 1-2} Let $(X,\omega)$ be a compact Hermitian manifold of complex dimension $n$. Let $\{\beta\}\in H^{1,1}(X,\mathbb R)$ be a real $(1,1)$-class with a smooth representative $\beta$. Assume that there exists a continuous function $\rho$ such that $\beta+dd^c\rho\geq 0$ in the sense of currents. Then the following two conditions are equivalent:
		
		\begin{itemize}
			\item [(1)] For every non-pluripolar Randon measure $\mu$ satisfying $\mu(X)=\int_X \beta^n=1$, there exists $\varphi \in \mathcal{E}(X,\beta)$ such that
			$$\langle(\beta+dd^c\varphi)^n\rangle=\mu.$$
			\item [(2)]$Cap_{\beta}(\cdot)$ can characterize pluripolar sets in the following sense:  for any Borel set $E$, $Cap_{\beta}(E)=0$ if and only if  $E$  is pluripolar. 
		\end{itemize}
		\end{thm}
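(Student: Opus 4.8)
The plan is to establish the two implications separately, after two preliminary observations. First, the inclusion ``range $\subseteq$ \{non-pluripolar measures of mass $\int_X\beta^n$\}'' is automatic: $\langle(\beta+dd^c\varphi)^n\rangle$ never charges pluripolar sets and always has total mass $\le\int_X\beta^n$, with equality exactly when $\varphi\in\mathcal E(X,\beta)$; so condition (1) is precisely the assertion that this range is \emph{all} such measures. Second, ``$E$ pluripolar $\Rightarrow\mathrm{Cap}_\beta(E)=0$'' holds unconditionally, since a bounded $\beta$-psh potential is locally a bounded psh function and, by Bedford--Taylor, its Monge--Amp\`ere measure does not charge pluripolar sets; hence condition (2) is equivalent to the single implication ``$\mathrm{Cap}_\beta(E)=0\Rightarrow E$ pluripolar''.

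For $(1)\Rightarrow(2)$ I will argue by contraposition. Suppose some Borel set $E$ is not pluripolar yet satisfies $\mathrm{Cap}_\beta(E)=0$. Passing to a compact non-pluripolar subset of $E$ inside a coordinate ball and taking the Monge--Amp\`ere measure of its relative extremal function, I obtain a non-pluripolar probability measure $\mu$ with $\mu(E)>0$ and $\mu(X)=\int_X\beta^n=1$. If (1) held there would be $\varphi\in\mathcal E(X,\beta)$ with $\langle(\beta+dd^c\varphi)^n\rangle=\mu$. But for \emph{any} $\varphi\in\mathrm{PSH}(X,\beta)$ the measure $\langle(\beta+dd^c\varphi)^n\rangle$ puts no mass on $\mathrm{Cap}_\beta$-null sets: with $\varphi^{(k)}:=\max(\varphi,\rho-k)$, on the plurifine open set $\{\varphi>\rho-k\}$ the product agrees with $(\beta+dd^c\varphi^{(k)})^n$ as $k\to\infty$, while for a bounded $\beta$-psh $w$ the convex combination $\tfrac1Mw+(1-\tfrac1M)\rho$ (with $M$ bounding the relevant oscillation) is an admissible competitor in the definition of $\mathrm{Cap}_\beta$, yielding $(\beta+dd^cw)^n(\,\cdot\,)\le M^n\,\mathrm{Cap}_\beta(\,\cdot\,)$. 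Therefore $\mu(E)=\langle(\beta+dd^c\varphi)^n\rangle(E)\le\lim_k M_k^n\,\mathrm{Cap}_\beta\big(E\cap\{\varphi>\rho-k\}\big)=0$, contradicting $\mu(E)>0$.

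For $(2)\Rightarrow(1)$, let $\mu$ be non-pluripolar with $\mu(X)=\int_X\beta^n=1$. Using condition (2), a Kolodziej--Cegrell-type decomposition produces $\mu=f\nu$ with $0\le f\in L^1(\nu)$ and a probability measure $\nu$ satisfying $\nu(E)\le A_0\,\mathrm{Cap}_\beta(E)$ for all Borel $E$; the classical proof of such a decomposition rests precisely on capacity characterizing pluripolar sets, which is why (2) is indispensable here. Set $\mu_j:=(\min(f,j)+\varepsilon_j)\nu$ with $\varepsilon_j:=1-\int_X\min(f,j)\,d\nu\in[0,1]$, so $\mu_j(X)=1$, $\mu_j\in\mathcal H((j+1)A_0,1,\beta)$, and $\mu_j\to\mu$ in total variation. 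Since $1>\tfrac{p}{p+1}$ for $p=1$, Theorem \ref{thm: main 1-1} gives $\varphi_j\in\mathcal E^1(X,\beta)$ with $\langle(\beta+dd^c\varphi_j)^n\rangle=\mu_j$ and $\sup_X\varphi_j=0$. The normalization makes $\{\varphi_j\}$ relatively compact in $L^1(X)$, so along a subsequence $\varphi_j\to\varphi\in\mathrm{PSH}(X,\beta)$ with $\sup_X\varphi=0$ (Hartogs lemma). It then remains to show $\varphi\in\mathcal E(X,\beta)$ and $\langle(\beta+dd^c\varphi)^n\rangle=\mu$, invoking the comparison principle on $(X,\omega)$ and the convergence properties of the non-pluripolar product on Hermitian manifolds from \cite{LWZ23} and the earlier sections.

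The main obstacle is this last step — ruling out loss of Monge--Amp\`ere mass in the limit. Because $\mu$ is only assumed non-pluripolar, it may carry infinite weighted energy, so $\varphi_j$ cannot be controlled through an energy functional as in the $\mathcal E^p$ part of Theorem \ref{thm: main 1-1}; instead one must combine the comparison principle with plurifine locality and the monotone continuity of $\langle(\beta+dd^c\cdot)^n\rangle$ — likely after comparing $\{\varphi_j\}$ with an auxiliary monotone family — to force $\int_X\langle(\beta+dd^c\varphi)^n\rangle=\lim_j\mu_j(X)=1$, hence $\varphi\in\mathcal E(X,\beta)$, after which the weak convergence $\mu_j\to\mu$ identifies $\langle(\beta+dd^c\varphi)^n\rangle=\mu$. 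This parallels the K\"ahler case of \cite{GZ07,BEGZ10}; the novelty is the absence of a K\"ahler form, which forces everything through $\mathrm{Cap}_\beta$ and the Hermitian estimates of \cite{LWZ23}.
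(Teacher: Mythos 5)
Your $(1)\Rightarrow(2)$ argument is essentially the paper's: both reduce to showing that $\langle(\beta+dd^c\varphi)^n\rangle$ cannot charge a $\mathrm{Cap}_\beta$-null set, by rescaling the truncations $\varphi^{(k)}$ so that they become admissible competitors in the definition of $\mathrm{Cap}_\beta$ (the paper uses $\rho+v^{(k)}/k$ and the bound $(\beta+dd^c\rho+dd^cv^{(k)})^n\le k^n(\beta+dd^c\rho+dd^c\tfrac{v^{(k)}}{k})^n$; your $M_k^n$ bound is the same device). Your preliminary observations (that the forward implication ``pluripolar $\Rightarrow$ zero capacity'' is automatic, and that condition (1) is exactly surjectivity onto non-pluripolar probability measures) are also correct.

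The $(2)\Rightarrow(1)$ direction, however, has a genuine gap precisely where you flag ``the main obstacle'': you do not actually rule out loss of Monge--Amp\`ere mass in the limit $\varphi_j\to\varphi$, and the weak convergence $\mu_j\to\mu$ by itself cannot identify $\langle(\beta+dd^c\varphi)^n\rangle$, since the non-pluripolar operator is not continuous along $L^1$ convergence. Moreover, your stated reason for abandoning energy methods --- that $\mu$ ``may carry infinite weighted energy'' --- is misleading: the paper's resolution of exactly this difficulty \emph{is} an energy argument, with the weight tailored to the measure. Concretely, after the decomposition $\mu=f\nu$ of Lemma \ref{lem: npp measure} (where, importantly, $\nu=(\beta+dd^c\rho+dd^cu)^n$ for a \emph{bounded} potential $u$, not merely $\nu\le A_0\,\mathrm{Cap}_\beta$ --- this stronger form is what makes the Chern--Levine--Nirenberg bound $\int_X(-v_j)\,d\nu\le C\|v_j\|_{L^1}$ available), one chooses an Orlicz-type convex $\gamma$ with $\gamma\circ f\in L^1(\nu)$ and sets $\chi\in\mathcal W^-$ to be (a smoothing of) $-(\gamma^*)^{-1}(-t)$, so that $-\chi(t)f(x)\le -t+\gamma\circ f(x)$. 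This yields a \emph{uniform} bound on $E_\chi(\rho+v_j)$, which is transferred to the decreasing envelopes $\Phi_j=(\sup_{k\ge j}v_k+\rho)^*$ via Proposition \ref{prop: fund inequ}, giving $\rho+v\in\mathcal E_\chi(X,\beta)\subset\mathcal E(X,\beta)$ by Proposition \ref{prop:char echi}. Then Proposition \ref{prop: preserve sup} gives $\langle(\beta+dd^c\Phi_j)^n\rangle\ge\min\{f,j\}\nu$, Theorem \ref{thm: mono converge} passes this to the limit to get $\langle(\beta+dd^c\rho+dd^cv)^n\rangle\ge\mu$, and equality of total masses forces equality of measures. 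Without constructing such a $\chi$ (or an equivalent mass-retention mechanism), your proof of $(2)\Rightarrow(1)$ is incomplete; you should also note that the decomposition lemma itself uses the continuity of $\rho$ (compactness of $\mathcal H(1,1,\beta)$) together with Theorem \ref{thm: main 1 weak} to produce the bounded potential $u$, not only condition (2).
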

		
		\begin{rem}
			When $X$ is a compact K\"ahler manifold and $\beta$ is a K\"ahler metric, the above theorem is proved by Guedj-Zeriahi \cite[Theorem A]{GZ07}.
		\end{rem}
	The structure of this paper is organized as follows. In Section \ref{sect: pre}, we present basic properties of the non-pluripolar product and several inequalities concerning the Monge-Ampère energy. In Section \ref{sect: range}, we investigate the range of the complex non-pluripolar Monge-Ampère operator on the class $\mathcal E^p(X,\beta)$ and present the proof of Theorem \ref{thm: main 1-1}. In Section \ref{sect: char}, we provide a complete characterization of the range of the complex non-pluripolar Monge-Ampère operator on the class $\mathcal E(X,\beta)$ and complete the proof of Theorem \ref{thm: main 1-2}.

\subsection*{Acknowledgements} 
This research is supported by National Key R\&D Program of China (No. 2021YFA1002600 and No. 2021YFA1003100).  Wang and Zhou  are partially supported respectively by NSFC grants (12071035, 12288201).
The second author  was   partially supported by Beijing Natural Science Foundation (1202012, Z190003). 

\section{Preliminaries }\label{sect: pre}
Let $(X,\omega) $ be a compact Hermitian manifold of complex dimension $n$. Let $\{\beta\}\in H^{1,1}(X,\mathbb R)$ be a real $(1,1)$-class with a smooth representative $\beta$. Suppose that there is a bounded $\beta$-psh function $\rho$ such that $\beta+dd^c\rho\geq 0$ in the weak sense of currents.

\begin{defn}[{\cite{BEGZ10}}]\label{defn: nn product}
	
	Let  $\varphi \in \mbox{PSH}(X,\beta)$. The non-pluripolar product $ \left\langle(\beta+dd^c \varphi)^n \right\rangle$  is defined as 
	$$\left\langle(\beta+dd^c \varphi)^n \right\rangle:=\lim_{k\rightarrow \infty} \mathds{1}_{\{\varphi \textgreater \rho-k\}}(\beta+dd^c\varphi^{(k)})^n,$$
	where $\varphi^{(k)}:=\max\{\varphi,\rho-k\}$. 
	\end{defn}

\begin{rem}
	It is easy to see that $O_k:=\{\varphi>\rho-k\}$ is a plurifine open subset. For any compact subset $K$ of $X$, we have
	\[\sup_k\int_{K\cap O_k}(\beta+dd^c\varphi^{(k)})^n \leq \sup_k\int_X(\beta+dd^c\varphi^{(k)})^n=\int_X\beta^n<+\infty.\]
	Thus, the Definition \ref{defn: nn product} of non-pluripolar product is well-defined due to \cite[Definition 1.1]{BEGZ10}, and it has the basic properties in \cite[Proposition 1.4]{BEGZ10}.
\end{rem}
\begin{defn}\label{defn: full ma mass}
	Let $\varphi \in \mbox{PSH}(X,\beta)$. We say that $\varphi$ has full Monge-Amp\`ere mass if
	\[\int_X \langle(\beta+dd^c\varphi)^n\rangle=\int_X \beta^n.\]
	We denote by $\mathcal{E}(X,\beta)$ the class of $\beta$-psh functions with full Monge-Amp\`ere mass.
\end{defn}		
	
By a slight modification of the proof of \cite[Lemma 1.2]{GZ07}, we have the following lemma:

\begin{lem}\label{lem: npp basic prop}
	Let $\varphi\in \mbox{PSH}(X,\beta)$. Let $(s_k)$ be any sequence of real numbers converging to $+\infty$, such that $s_k\leq k$ for all $k\in \mathbb N$. The following conditions are equivalent:
	\begin{itemize}
		\item[(a)] $\varphi\in \mathcal E(X,\beta)$;
		\item[(b)] $(\beta+dd^c\varphi^{(k)})^n(\varphi\leq\rho-k )\rightarrow 0$;
		\item[(c)] $(\beta+dd^c\varphi^{(k)})^n(\varphi\leq\rho-s_k )\rightarrow 0$.
	\end{itemize}
\end{lem}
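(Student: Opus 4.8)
The plan is to adapt the proof of \cite[Lemma 1.2]{GZ07} to the Hermitian setting, where the only essential change is that $\beta$ need not be positive and $\int_X\beta^n$ need not be preserved under truncation at the level of non-closed currents. I would first record the basic mass comparison: since $\varphi^{(k)}=\max\{\varphi,\rho-k\}$ is a bounded $\beta$-psh function, $\int_X(\beta+dd^c\varphi^{(k)})^n=\int_X\beta^n$ by Stokes (the form $\beta$ is closed, so the total Monge--Amp\`ere mass of any bounded potential equals $\int_X\beta^n$). Next, on the plurifine open set $O_k=\{\varphi>\rho-k\}$ one has $\varphi^{(k)}=\varphi$, hence by the locality of the non-pluripolar product in the plurifine topology, $\mathds{1}_{O_k}(\beta+dd^c\varphi^{(k)})^n=\mathds{1}_{O_k}(\beta+dd^c\varphi^{(k+j)})^n$ for all $j\geq0$, and these measures increase to $\langle(\beta+dd^c\varphi)^n\rangle$ as $k\to\infty$. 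Combining,
\[
\int_X\langle(\beta+dd^c\varphi)^n\rangle=\lim_{k\to\infty}\int_{O_k}(\beta+dd^c\varphi^{(k)})^n=\int_X\beta^n-\lim_{k\to\infty}\int_{\{\varphi\leq\rho-k\}}(\beta+dd^c\varphi^{(k)})^n,
\]
where the limit on the right exists by monotone convergence. This gives at once the equivalence of (a) and (b).

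For (b)$\Leftrightarrow$(c), since $s_k\leq k$ we have $\{\varphi\leq\rho-k\}\subseteq\{\varphi\leq\rho-s_k\}$, so (c)$\Rightarrow$(b) is trivial. For the converse I would use the standard comparison trick: on the set $\{\rho-k<\varphi\leq\rho-s_k\}$ one has $\varphi^{(k)}=\varphi$ and also $\varphi^{(s_k)}=\max\{\varphi,\rho-s_k\}=\rho-s_k$; bound the mass of $(\beta+dd^c\varphi^{(k)})^n$ there by the mass of a truncation at an intermediate level, or more directly, observe that $\{\varphi\leq\rho-s_k\}=\{\varphi^{(s_k+1)}\leq\rho-s_k\}$ up to the appropriate inclusions and that the sequence $s_k+1$ is itself an admissible ``$k$'' once one passes to a subsequence; then use that (b) already gives $\int_X\langle(\beta+dd^c\varphi)^n\rangle=\int_X\beta^n$, run the computation of the previous paragraph with $s_k$ in place of $k$, and conclude $\int_{\{\varphi\leq\rho-s_k\}}(\beta+dd^c\varphi^{(s_k)})^n\to0$. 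Since on $\{\varphi\leq\rho-s_k\}$ the potentials $\varphi^{(k)}$ and $\varphi^{(s_k)}$ need not agree, the last step requires comparing $(\beta+dd^c\varphi^{(k)})^n$ and $(\beta+dd^c\varphi^{(s_k)})^n$ on this set; here one uses that $\varphi^{(k)}\leq\varphi^{(s_k')}$ does not hold pointwise in the needed direction, so instead I would invoke the plurifine locality once more together with the inclusion $\{\varphi\leq\rho-s_k\}\supseteq\{\varphi\leq\rho-k\}$ and a telescoping estimate $\int_{\{\rho-k<\varphi\leq\rho-s_k\}}(\beta+dd^c\varphi^{(k)})^n\leq\int_X\beta^n-\int_{O_{s_k}}(\beta+dd^c\varphi^{(s_k)})^n\to0$.

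The main obstacle is the passage (b)$\Rightarrow$(c): one must control the Monge--Amp\`ere mass of the \emph{high} truncation $\varphi^{(k)}$ on the annular region $\{\rho-k<\varphi\leq\rho-s_k\}$, and the Hermitian setting offers no positivity of $\beta$ to simplify the comparison of currents at different truncation levels. The clean way around this, which I expect to use, is to not compare $\varphi^{(k)}$ with $\varphi^{(s_k)}$ directly but rather to re-run the mass identity of the first paragraph verbatim with the sequence $(s_k)$ (legitimate because each $s_k\leq k$ and $s_k\to\infty$, so $\varphi^{(s_k)}$ is still a bounded $\beta$-psh exhaustion of $\varphi$ and $\mathds{1}_{\{\varphi>\rho-s_k\}}(\beta+dd^c\varphi^{(s_k)})^n\nearrow\langle(\beta+dd^c\varphi)^n\rangle$), yielding
\[
\int_{\{\varphi\leq\rho-s_k\}}(\beta+dd^c\varphi^{(s_k)})^n=\int_X\beta^n-\int_{\{\varphi>\rho-s_k\}}(\beta+dd^c\varphi^{(s_k)})^n\longrightarrow\int_X\beta^n-\int_X\langle(\beta+dd^c\varphi)^n\rangle=0
\]
under hypothesis (b). This makes (a), (b), (c) all equivalent without any delicate current comparison, and the ``slight modification'' of \cite[Lemma 1.2]{GZ07} is precisely the observation that closedness of $\beta$ — not its positivity — is all that the mass identity needs.
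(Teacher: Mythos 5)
Your argument is essentially the paper's own: the paper gives no proof and simply invokes a slight modification of \cite[Lemma 1.2]{GZ07}, and your two ingredients --- the total mass identity $\int_X(\beta+dd^c\varphi^{(k)})^n=\int_X\beta^n$ (valid by Stokes because $\beta$ is closed, which is exactly the point of the ``modification'') and plurifine locality of the Bedford--Taylor product on $O_k=\{\varphi>\rho-k\}$ --- are precisely what that proof uses. One correction to your final paragraph, though: re-running the mass identity with $(s_k)$ in place of $(k)$ yields $(\beta+dd^c\varphi^{(s_k)})^n(\{\varphi\le\rho-s_k\})\to0$, which is \emph{not} statement (c); (c) concerns the truncation $\varphi^{(k)}$ on the sublevel set at height $s_k$, so this step alone does not make (a), (b), (c) equivalent ``without any current comparison.'' The comparison you need is the one you already wrote in your second paragraph: split $\{\varphi\le\rho-s_k\}$ into $\{\varphi\le\rho-k\}$, which is handled by (b), and the annulus $\{\rho-k<\varphi\le\rho-s_k\}\subset O_k$, where plurifine locality gives $\mathds{1}_{O_k}(\beta+dd^c\varphi^{(k)})^n=\mathds{1}_{O_k}\langle(\beta+dd^c\varphi)^n\rangle$ and hence
$\int_{\{\rho-k<\varphi\le\rho-s_k\}}(\beta+dd^c\varphi^{(k)})^n\le\int_X\beta^n-\int_{O_{s_k}}(\beta+dd^c\varphi^{(s_k)})^n\to0$
under (a)$\Leftrightarrow$(b). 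Keep that telescoping step --- it is not a ``delicate current comparison,'' just locality --- and the proof is complete.
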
			

From the above lemma, one can see that for any $\varphi\in \mathcal E(X,\beta)$ and bounded Borel function $b$,
\[\langle(\beta+dd^c\varphi^{(k)})^n,b\rangle \rightarrow \langle \langle(\beta+dd^c\varphi)^n\rangle,b\rangle.\]
In particular, the non-pluripolar product $\langle(\beta+dd^c\varphi)^n\rangle$ puts no mass on pluripolar sets, and 
\[\mathds{1}_{\{\varphi>\rho-k\}}\langle(\beta+dd^c\varphi)^n\rangle(B)=\mathds{1}_{\{\varphi>\rho-k\}}\langle(\beta+dd^c\varphi^{(k)})^n\rangle(B)\]
for all Borel subsets $B\subset X$.

As in \cite{BBGZ13}, a weight function is defined as a smooth increasing function $\chi:\ \mathbb{R}\rightarrow \mathbb{R}$ such that $\chi(-\infty)=-\infty$ and $\chi(t)=t$ for $t\geq0$.

Denote $\mathcal{W}^-$ as the class of all convex weight functions. It is easy to see that if $\chi\in \mathcal{W}^-$, then $\chi'\leq 1$.

\begin{rem} 
	In  \cite{GZ07}, the weight function $\chi$ is defined as a non-positive, continuously increasing function defined on $\mathbb{R}^-$, such that $\chi(-\infty)=-\infty$. In application, if $t\leq 0$, there is no need to define a weight function $\chi$ on the full real line $\mathbb{R}$, and in this case, we view a non-positive, smooth increasing function $\chi$ on $\mathbb{R}^-$, such that $\chi(-\infty)=-\infty$ as a weight function. Denote by $\mathcal{W}_h^-$ the class of all smooth, convex, increasing weight functions $\chi:\mathbb{R}^-\rightarrow \mathbb{R}^-$.
		\end{rem}

\begin{defn}
	Let $\chi$ be a weight function. For any $\varphi\in \mathcal{E}(X,\beta)$, the $\chi$-Monge-Amp\`ere energy of $\varphi$ with respect to the weight $\chi$ is defined as 
	\begin{align*}
		E_\chi(\varphi):=\int_X-\chi(\varphi-\rho)\langle(\beta+dd^c\varphi)^n\rangle.
	\end{align*}
	We denote $\mathcal{E}_\chi(X,\beta)$ as the class of all $\beta$-plurisubharmonic (PSH) functions with finite $\chi$-energy and full Monge-Amp\`ere mass.
	
	If $\chi(t)=-(-t)^p$, when $t\leq 0$, for some $p>0$, the $\chi$-energy is denoted by $E_p(\varphi)$ and the corresponding $\chi$-energy class is denoted by $\mathcal{E}^p(X,\beta)$, which is the class of all $\varphi\in \mathcal{E}(X,\beta)$ such that $\varphi\in L^p(\langle(\beta+dd^c\varphi)^n\rangle)$.
\end{defn}	   
		

		\begin{prop}[cf. {\cite[Proposition 2.11]{BEGZ10}}]\label{prop: chi energy}
			Let $\varphi\in \mathrm{PSH}(X,\beta)$ and $\chi$ be a (not necessarily convex) weight function. Then
			$$\varphi\in \mathcal{E}_\chi(X,\beta) \iff \sup_{k\in \mathbb{N}^*}\int_X-\chi(\varphi^{(k)}-\rho)(\beta+dd^c\varphi^{(k)})^n < +\infty.$$
				\end{prop}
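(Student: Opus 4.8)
The plan is to establish the equivalence by a truncation argument, comparing the $\chi$-energy of $\varphi$ computed via the non-pluripolar product with the energies of the canonical approximants $\varphi^{(k)} = \max\{\varphi,\rho-k\}$, which are bounded $\beta$-psh functions for which the Monge-Amp\`ere measure $(\beta+dd^c\varphi^{(k)})^n$ is the classical Bedford-Taylor product. The forward implication ($\Rightarrow$) should be the easier direction: assuming $\varphi\in\mathcal E_\chi(X,\beta)$, in particular $\varphi\in\mathcal E(X,\beta)$, so by Lemma \ref{lem: npp basic prop} we may freely pass between the truncated products and the non-pluripolar product against bounded Borel functions; the point is to control $-\chi(\varphi^{(k)}-\rho)$, which is \emph{not} bounded uniformly in $k$, so one cannot simply quote that convergence. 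Instead I would split $X$ into $\{\varphi>\rho-k\}$ and $\{\varphi\le\rho-k\}$. On the former set $\varphi^{(k)}=\varphi$ and $\mathds 1_{\{\varphi>\rho-k\}}(\beta+dd^c\varphi^{(k)})^n=\mathds 1_{\{\varphi>\rho-k\}}\langle(\beta+dd^c\varphi)^n\rangle$, so that contribution is bounded by $E_\chi(\varphi)<+\infty$. On the latter set $\varphi^{(k)}=\rho-k$, so $-\chi(\varphi^{(k)}-\rho)=-\chi(-k)$ is a constant, and the integral there equals $-\chi(-k)\cdot(\beta+dd^c\varphi^{(k)})^n(\{\varphi\le\rho-k\})$.

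The main obstacle is therefore the following quantitative point, needed for \emph{both} directions: one must show $-\chi(-k)\cdot(\beta+dd^c\varphi^{(k)})^n(\{\varphi\le\rho-k\})$ stays bounded (indeed tends to $0$, or is dominated by the energy) when $\varphi\in\mathcal E_\chi$. The natural tool is a comparison/integration-by-parts estimate in the spirit of \cite[Proposition 2.11]{BEGZ10}: writing $E_\chi(\varphi^{(k)})=\int_X -\chi(\varphi^{(k)}-\rho)(\beta+dd^c\varphi^{(k)})^n$ and using convexity of $\chi$ (or, in the general weight case, a monotonicity argument comparing the energies $E_\chi(\varphi^{(j)})$ for $j\le k$), one obtains that the tail mass $(\beta+dd^c\varphi^{(k)})^n(\{\varphi\le\rho-k\})$ decays fast enough relative to $-\chi(-k)$ precisely because the truncated energies are uniformly bounded. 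In the Hermitian (non-K\"ahler) setting the integration-by-parts identities pick up error terms involving $d\omega$ and $dd^c\omega$; these are handled exactly as in the proof of Lemma \ref{lem: npp basic prop} and the preliminaries of \cite{LWZ23}, using that $\rho$ is bounded so all potentials differ by bounded amounts and the error terms are controlled by the total mass $\int_X\beta^n$.

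For the reverse implication ($\Leftarrow$), assume $M:=\sup_k\int_X-\chi(\varphi^{(k)}-\rho)(\beta+dd^c\varphi^{(k)})^n<+\infty$. First I would show $\varphi\in\mathcal E(X,\beta)$: on $\{\varphi\le\rho-k\}$ we have $-\chi(\varphi^{(k)}-\rho)=-\chi(-k)$, hence $(\beta+dd^c\varphi^{(k)})^n(\{\varphi\le\rho-k\})\le M/(-\chi(-k))\to0$ since $\chi(-\infty)=-\infty$; by Lemma \ref{lem: npp basic prop}(b) this gives $\varphi\in\mathcal E(X,\beta)$. Then, knowing $\varphi\in\mathcal E(X,\beta)$, Lemma \ref{lem: npp basic prop} lets me identify the limit of $\mathds 1_{\{\varphi>\rho-j\}}(\beta+dd^c\varphi^{(k)})^n$ as $\langle(\beta+dd^c\varphi)^n\rangle$ on $\{\varphi>\rho-j\}$; combining this with the monotone convergence theorem (as $j\to\infty$, $-\chi(\varphi-\rho)\mathds 1_{\{\varphi>\rho-j\}}\uparrow -\chi(\varphi-\rho)$) and Fatou's lemma applied to the $k\to\infty$ limit, I get $E_\chi(\varphi)=\int_X-\chi(\varphi-\rho)\langle(\beta+dd^c\varphi)^n\rangle\le \liminf_k\int_X-\chi(\varphi^{(k)}-\rho)(\beta+dd^c\varphi^{(k)})^n\le M<+\infty$, so $\varphi\in\mathcal E_\chi(X,\beta)$. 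The two directions together, with the tail estimate as the common technical core, give the stated equivalence.
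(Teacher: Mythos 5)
Your reverse implication ($\Leftarrow$) is essentially the paper's argument, and your explicit verification that $\varphi$ has full Monge--Amp\`ere mass (via $(\beta+dd^c\varphi^{(k)})^n(\{\varphi\le\rho-k\})\le M/(-\chi(-k))\to 0$ and Lemma \ref{lem: npp basic prop}) is a worthwhile step that the paper leaves implicit. The monotone-convergence/Fatou combination at the end requires the harmless normalization $\varphi\le\rho$ so that the integrands are nonnegative and increasing in $j$, which the paper states and you should too.

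The forward implication ($\Rightarrow$), however, has a genuine gap. You correctly isolate the problematic term $-\chi(-k)\cdot(\beta+dd^c\varphi^{(k)})^n(\{\varphi\le\rho-k\})$, but your proposed control of it is circular and misdirected: you say the tail mass decays fast enough ``precisely because the truncated energies are uniformly bounded,'' which is the conclusion you are trying to prove, and you invoke integration by parts with Hermitian error terms involving $d\omega$, which plays no role here. The correct mechanism is elementary and uses only the hypothesis $\varphi\in\mathcal E(X,\beta)$ built into $\mathcal E_\chi(X,\beta)$: since $\int_X(\beta+dd^c\varphi^{(k)})^n=\int_X\beta^n=\int_X\langle(\beta+dd^c\varphi)^n\rangle$ and $\mathds 1_{\{\varphi>\rho-k\}}(\beta+dd^c\varphi^{(k)})^n=\mathds 1_{\{\varphi>\rho-k\}}\langle(\beta+dd^c\varphi)^n\rangle$ by plurifine locality, one gets the exact mass balance
\begin{equation*}
(\beta+dd^c\varphi^{(k)})^n\bigl(\{\varphi\le\rho-k\}\bigr)=\langle(\beta+dd^c\varphi)^n\rangle\bigl(\{\varphi\le\rho-k\}\bigr),
\end{equation*}
and then, since $\chi$ is increasing and $\varphi-\rho\le -k$ on that set, $-\chi(-k)\le-\chi(\varphi-\rho)$ there, so the term is dominated by $\int_{\{\varphi\le\rho-k\}}-\chi(\varphi-\rho)\langle(\beta+dd^c\varphi)^n\rangle\le E_\chi(\varphi)<+\infty$. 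No convexity of $\chi$ and no integration by parts are needed; without this identity your forward direction does not close.
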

		\begin{proof}
			The proof is essentially the same as the proof of  \cite[Proposition 2.11]{BEGZ10}. Without loss of generality, we may assume that $\varphi-\rho$ and $\varphi^{(k)}-\rho\leq 0$.  
			
			Assume first that $\sup_{k\in \mathbb{N}^*}\int_X-\chi(\phi^{(k)}-\rho)(\beta+dd^c\phi^{(k)})^n < +\infty$.  By the dominated convergence theorem, it suffices to prove that for $j\in\mathbb{N}^*$
			\[\int_X-\chi(\varphi^{(j)}-\rho)\langle(\beta+dd^c\varphi )^n \rangle\leq \liminf_k\int_X-\chi(\phi^{(k)}-\rho)(\beta+dd^c\phi^{(k)})^n .\]
			By the definition of non-pluripolar product, we have:
			\begin{align*}
				\int_X -\chi(\varphi^{(j)}-\rho)\langle(\beta+dd^c\varphi )^n \rangle&=\lim_k\int_X -\chi(\varphi^{(j)}-\rho)\mathds{1}_{\{\varphi>\rho-k\}}(\beta+dd^c\varphi^{(k)} )^n\\&
				\leq \liminf_k\int_X -\chi(\varphi^{(k)}-\rho)\mathds{1}_{\{\varphi>\rho-k\}}(\beta+dd^c\varphi^{(k)} )^n\\
				&\leq \liminf_k\int_X -\chi(\varphi^{(k)}-\rho)(\beta+dd^c\varphi^{(k)} )^n<+\infty.
			\end{align*}
			
			Conversely, we assume that  $\varphi\in \mathcal E_\chi(X,\beta)$. Since 
			\[\int_{\{\varphi\leq \rho-k\}}(\beta+dd^c\varphi^{(k)})^n=\int_X(\beta+dd^c\varphi^{(k)})^n-\int_{\{\varphi> \rho-k\}}(\beta+dd^c\varphi^{(k)})^n=\int_{\{\varphi\leq \rho-k\}}\left\langle(\beta+dd^c\varphi)^n\right\rangle,\]
			then 
			\begin{align*}
				\int_X -\chi(\varphi^{(k)}-\rho)(\beta+dd^c\varphi^{(k)})^n &=-\chi(-k)\int_{\{\varphi\leq \rho-k\}}(\beta+dd^c\varphi^{(k)})^n+\int_{\{\varphi> \rho-k\}}-\chi(\varphi-\rho)(\beta+dd^c\varphi^{(k)})^n\\
				&=\int_{\{\varphi\leq \rho-k\}}-\chi(-k)\left\langle(\beta+dd^c\varphi)^n\right\rangle+\int_{\{\varphi> \rho-k\}}-\chi(\varphi-\rho)\left\langle(\beta+dd^c\varphi)^n\right\rangle\\
				&\leq \int_X-\chi(\varphi-\rho)\left\langle(\beta+dd^c\varphi)^n\right\rangle<+\infty.
			\end{align*}
			The proof of Proposition \ref{prop: chi energy} is complete.
		\end{proof}
		We have the following estimate on the $\chi$-energy.
		\begin{prop}[cf. {\cite[Proposition 2.8]{BEGZ10}}]\label{prop: mix chi energy ineq} 
			
			Let $\chi$ be a (not necessarily convex) weight function. For any $\varphi\in \mathcal E_\chi(X,\beta)$ and $0\leq l\leq n$, we have 
			\[
			\int_X -\chi(\varphi-\rho)\langle(\beta+dd^c\rho)^l\wedge (\beta+dd^c\varphi)^{n-l}\rangle\leq \int_X-\chi(\varphi-\rho)\langle(\beta+dd^c \varphi)^n\rangle.
			\]
		\end{prop}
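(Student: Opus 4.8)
The plan is to reduce the claimed inequality to its truncated analogue for bounded potentials, where it follows from a standard comparison-of-currents argument combined with integration by parts, and then pass to the limit using the monotone/dominated convergence machinery encoded in Lemma~\ref{lem: npp basic prop} and Proposition~\ref{prop: chi energy}. Throughout I would normalize so that $\varphi - \rho \le 0$ and $\varphi^{(k)} - \rho \le 0$, writing $\psi := \varphi - \rho$ and $\psi_k := \varphi^{(k)} - \rho$, and set $T^{(k)}_l := (\beta + dd^c\rho)^l \wedge (\beta + dd^c\varphi^{(k)})^{n-l}$, a closed positive $(n,n)$-current (a genuine positive measure) since $\rho$ and $\varphi^{(k)}$ are bounded $\beta$-psh functions and the products are taken in the sense of Bedford--Taylor.

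First I would prove the bounded case: for bounded $u, v \in \mathrm{PSH}(X,\beta)$ with $u \le v$ (here $u = \varphi^{(k)} + \rho$-shifted... more precisely with $\rho$ and $\varphi^{(k)}$, noting $\varphi^{(k)} \le \rho$ after our normalization is \emph{not} what we want, so instead I compare the two currents directly), one has, for each $0 \le l \le n$ and each nonnegative weight-type integrand $g = -\chi(\psi_k)$ which is bounded and $\beta$-psh-controlled,
\[
\int_X g \, (\beta+dd^c\rho)^{l}\wedge(\beta+dd^c\varphi^{(k)})^{n-l} \le \int_X g\,(\beta+dd^c\varphi^{(k)})^{n}.
\]
The mechanism is the elementary inequality on the level of currents: for bounded $\beta$-psh $\rho, \varphi^{(k)}$ and a bounded quasi-psh test function, integrating the identity
\[
(\beta+dd^c\varphi^{(k)})^{n} - (\beta+dd^c\rho)\wedge(\beta+dd^c\varphi^{(k)})^{n-1} = dd^c(\varphi^{(k)}-\rho)\wedge(\beta+dd^c\varphi^{(k)})^{n-1}
\]
against $-\chi(\psi_k) \ge 0$ and integrating by parts transfers $dd^c$ onto $-\chi(\psi_k)$; since $\chi$ is increasing and (after composing) $\psi_k$ is $\beta$-psh-like, $dd^c(-\chi(\psi_k))$ splits into a nonpositive multiple of $dd^c\psi_k$ plus a nonpositive Hessian term, and wedging against the positive current $(\beta+dd^c\varphi^{(k)})^{n-1}$ and then summing the telescoping chain from $l$ down to $0$ yields the inequality with the correct sign. (This is precisely the argument of \cite[Proposition 2.8]{BEGZ10}, adapted to the Hermitian setting; the non-closedness of $\omega$ plays no role here because $\beta$ is closed and all the currents involved are built only from $\beta$ and $dd^c$ of functions, so Stokes applies without correction terms.)

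Second, having the truncated inequality
\[
\int_X -\chi(\varphi^{(k)}-\rho)\,\langle(\beta+dd^c\rho)^l\wedge(\beta+dd^c\varphi^{(k)})^{n-l}\rangle \le \int_X -\chi(\varphi^{(k)}-\rho)\,(\beta+dd^c\varphi^{(k)})^n
\]
for every $k$, I would let $k \to \infty$. The right-hand side converges to $\int_X -\chi(\varphi-\rho)\,\langle(\beta+dd^c\varphi)^n\rangle$: indeed by Proposition~\ref{prop: chi energy} the quantities $\int_X -\chi(\varphi^{(k)}-\rho)(\beta+dd^c\varphi^{(k)})^n$ are uniformly bounded, and the limit identification is exactly the content of the remark following Lemma~\ref{lem: npp basic prop} (applied with the bounded Borel weight obtained by further truncating $-\chi$ at height $j$ and then letting $j\to\infty$ by monotone convergence). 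For the left-hand side I would use lower semicontinuity: on each plurifine open set $\{\varphi > \rho - j\}$ one has $\mathds 1_{\{\varphi>\rho-j\}}\langle(\beta+dd^c\rho)^l\wedge(\beta+dd^c\varphi)^{n-l}\rangle = \mathds 1_{\{\varphi>\rho-j\}}(\beta+dd^c\rho)^l\wedge(\beta+dd^c\varphi^{(k)})^{n-l}$ for $k \ge j$ by the plurifine locality of the Bedford--Taylor product, so a Fatou-type argument gives
\[
\int_X -\chi(\varphi^{(j)}-\rho)\,\langle(\beta+dd^c\rho)^l\wedge(\beta+dd^c\varphi)^{n-l}\rangle \le \liminf_{k} \int_X -\chi(\varphi^{(k)}-\rho)\,(\beta+dd^c\rho)^l\wedge(\beta+dd^c\varphi^{(k)})^{n-l},
\]
and then $j \to \infty$ with monotone convergence finishes it. The main obstacle I anticipate is the middle inequality --- making the integration-by-parts/telescoping argument rigorous when only one of the potentials is the fixed bounded $\rho$ and the other is the truncation $\varphi^{(k)}$, in particular justifying the sign of $dd^c(-\chi(\varphi^{(k)}-\rho))$ and the boundary-term-free Stokes theorem on the compact Hermitian $X$; everything else is bookkeeping with the convergence theorems already available in the excerpt.
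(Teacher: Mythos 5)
Your overall route --- truncate to $\varphi^{(k)}$, replace one factor $(\beta+dd^c\rho)$ by $(\beta+dd^c\varphi^{(k)})+dd^c(\rho-\varphi^{(k)})$ at a time, integrate by parts, and pass to the limit in $k$ --- is the same as the paper's. The genuine problem is the sign mechanism in your middle step. Writing $\psi_k=\varphi^{(k)}-\rho$ and $T$ for the relevant positive product as you do, you move the full $dd^c$ onto $-\chi(\psi_k)$ and assert that $dd^c(-\chi(\psi_k))=-\chi'(\psi_k)\,dd^c\psi_k-\chi''(\psi_k)\,d\psi_k\wedge d^c\psi_k$ consists of two nonpositive pieces. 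Neither claim holds in the stated generality: $dd^c\psi_k=dd^c(\varphi^{(k)}-\rho)$ is a difference of currents with no sign, so multiplying it by the nonpositive function $-\chi'(\psi_k)$ does not produce a nonpositive current; and the Hessian term is nonpositive only when $\chi''\ge 0$, i.e.\ when $\chi$ is convex --- but the proposition explicitly allows non-convex weights, and it is later invoked (in Proposition~\ref{prop: cap estima upper level set 2}) for concave $\chi\in\mathcal{W}^+_M$, for which your sign would be wrong. The fix is to integrate by parts only once, as the paper does: $\int_X-\chi(\psi_k)\,dd^c\psi_k\wedge T=\int_X\chi'(\psi_k)\,d\psi_k\wedge d^c\psi_k\wedge T\ge 0$, which uses nothing beyond $\chi'\ge 0$ and the positivity of $T$.

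Second, the step you defer as ``the main obstacle'' --- justifying Stokes/integration by parts when the potentials are merely bounded --- is where the paper's proof actually spends most of its effort, and it cannot simply be waved at: the identity $d[\chi(\psi_k)d^c\psi_k\wedge T]=\chi'(\psi_k)d\psi_k\wedge d^c\psi_k\wedge T+\chi(\psi_k)dd^c\psi_k\wedge T$ is not known a priori for merely bounded quasi-psh $\varphi^{(k)},\rho$. The paper's workaround is to take Demailly regularizations $\varphi_j,\rho_j\in\mathrm{PSH}(X,\beta+\tfrac1j\omega)\cap C^\infty(X)$ decreasing to $\varphi^{(k)},\rho$ \cite{Dem12}, replace the weight by $\chi(\varphi_j-\rho_j)$ via dominated convergence, apply Stokes to the partially smoothed identity, and kill error terms such as $\int_X\chi'(\varphi_j-\rho_j)\,d(\varphi_j-\varphi^{(k)})\wedge d^c\psi_k\wedge T$ by Cauchy--Schwarz together with the Bedford--Taylor convergence theorem \cite{BT82}. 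Your treatment of the limit $k\to\infty$ (uniform bounds from Proposition~\ref{prop: chi energy} on the right, plurifine locality and Fatou on the left) is consistent with the paper, so once the two points above are repaired the proof closes.
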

		\begin{proof}
The proof is essentially the same as that of \cite[Proposition 2.8 (i)]{BEGZ10}, with the difference that we do not require the weight function $\chi$ to be convex. 
Fix $k\in \mathbb{N}^*$. By direct computation and integration by parts, we have
	\begin{align*}
				&\int_X-\chi(\varphi^{(k)}-\rho)(\beta+dd^c\rho)^l\wedge (\beta+dd^c\varphi^{(k)})^{n-l}\\
				&=\int_X-\chi(\varphi^{(k)}-\rho)(\beta+dd^c\rho)^{l-1}\wedge (\beta+dd^c\varphi^{(k)})^{n-l+1}\\
				&+\int_X\chi(\varphi^{(k)}-\rho)dd^c(\varphi^{(k)}-\rho)\wedge (\beta+dd^c\rho)^{l-1}\wedge(\beta+dd^c\varphi^{(k)})^{n-l-1} \\
				&=\int_X-\chi(\varphi^{(k)}-\rho)(\beta+dd^c\rho)^{l-1}\wedge (\beta+dd^c\varphi)^{n-l+1}\\
				&-\int_X \chi'(\varphi^{(k)}-\rho)d(\varphi^{(k)}-\rho)\wedge d^c(\varphi^{(k)}-\rho)\wedge (\beta+dd^c\rho)^{l-1}\wedge(\beta+dd^c\varphi^{(k)})^{n-l-1}\\
				&\leq \int_X -\chi(\varphi^{(k)}-\rho) (\beta+dd^c\rho)^{l-1}\wedge (\beta+dd^c\varphi^{(k)})^{n-l+1}.
			\end{align*}
	The last inequality is due to the fact that $\chi$ is an increasing function.
	
	By induction on $l$ and the proof of Proposition \ref{prop: chi energy}, as we let $k\rightarrow \infty$, we obtain:
			$$\int_X -\chi(\varphi-\rho)(\beta+dd^c\rho)^l\wedge\langle(\beta+dd^c\varphi)^{n-l}\rangle\leq \int_X-\chi(\varphi-\rho)\langle(\beta+dd^c \varphi)^n \rangle.$$
	Next, we will justify the integration by parts used above. Let us define $T=(\beta+dd^c\rho)^{l-1}\wedge (\beta+dd^c\varphi^{(k)})^{n-l+1}$. There is no need to prove the global identity of currents:
			\begin{center}
				$d[\chi(\varphi^{(k)}-\rho)d^c(\varphi^{(k)}-\rho)\wedge T]$ 
			\end{center}
			\begin{center}
				$=\chi'(\varphi^{(k)}-\rho)d(\varphi^{(k)}-\rho) \wedge d^c(\varphi^{(k)}-\rho)\wedge T+\chi(\varphi^{(k)}-\rho)dd^c(\varphi^{(k)}-\rho)\wedge T.$
			\end{center}
Let $\varphi_j, \rho_j \in \mbox{PSH}(X,\beta+\frac{1}{j}\omega)\cap C^{\infty}(X)$ be decreasing sequences towards $\varphi^{(k)},\rho$, respectively (the existence of such approximations is by Demailly's celebrated  regularization theorem \cite{Dem12}).

By the dominated convergence theorem, we can simplify the equation further:
			$$\int_X\chi(\varphi^{(k)}-\rho)dd^c(\varphi^{(k)}-\rho)\wedge T=\lim_{j\rightarrow \infty}\int_X\chi(\varphi_j-\rho_j)dd^c(\varphi^{(k)}-\rho)\wedge T.$$
By using local convolution to $\varphi^{(k)},\rho$, we can prove that:
		\begin{align*}
				&d[\chi(\varphi_j-\rho_j)d^c(\varphi^{(k)}-\rho)\wedge T]\\
	&=\chi'(\varphi_j-\rho_j)d(\varphi_j-\rho_j) \wedge d^c(\varphi^{(k)}-\rho)\wedge T+\chi(\varphi_j-\rho_j)dd^c(\varphi^{(k)}-\rho)\wedge T.
			\end{align*}
Then ,	by the Stokes theorem:
		\begin{align*}&\int_X\chi(\varphi_j-\rho_j)dd^c(\varphi^{(k)}-\rho)\wedge T\\
		&=\int_X -\chi'(\varphi_j-\rho_j)d(\varphi_j-\rho_j)\wedge d^c(\varphi^{(k)}-\rho)\wedge T.
			\end{align*}
Finally, by the dominated convergence theorem:
\begin{align*}&\lim_{j\rightarrow \infty}\int_X-\chi'(\varphi_j-\rho_j)d(\varphi^{(k)}-\rho)\wedge d^c(\varphi^{(k)}-\rho)\wedge T\\
		&=\int_X-\chi'(\varphi^{(k)}-\rho)d(\varphi^{(k)}-\rho)\wedge d^c(\varphi^{(k)}-\rho)\wedge T.
			\end{align*}
To complete the proof, we need to prove the following equations:
	\begin{align*}\lim_{j\rightarrow \infty}\int_X-\chi'(\varphi_j-\rho_j)d(\varphi_j-\varphi^{(k)})\wedge d^c(\varphi^{(k)}-\rho)\wedge T=0,\end{align*}
			and
\begin{align*}\lim_{j\rightarrow \infty}\int_X-\chi'(\varphi_j-\rho_j)d(\rho_j-\rho)\wedge d^c(\varphi^{(k)}-\rho)\wedge T=0,\end{align*}
which is a consequence of the Cauchy-Schwarz inequality and the Bedford-Taylor convergence theorem \cite[Theorem (2.7)]{BT82}.
In fact, by applying the Cauchy-Schwarz inequality, we have:
\begin{align*}&\left|\int_X-\chi'(\varphi_j-\rho_j)d(\varphi_j-\varphi^{(k)})\wedge d^c(\varphi^{(k)}-\rho)\wedge T\right|\\
		&\leq \left(\int_X[\chi'(\varphi_j-\rho_j)]^2d(\varphi^{(k)}-\rho)\wedge d^c(\varphi^{(k)}-\rho)\wedge T\right)^{\frac{1}{2}}\\
		&\cdot\left (\int_Xd(\varphi_j-\varphi^{(k)}) \wedge d^c(\varphi_j-\varphi^{(k)}) \wedge T\right)^{\frac{1}{2}}.
			\end{align*}
Since $\varphi_j-\rho_j\rightarrow \varphi^{(k)}-\rho$, the first term is uniformly bounded with respect to  $j\in \mathbb{N}^*$, while the second term goes to $0$ as $j\rightarrow \infty$ by the  Bedford-Taylor convergence theorem {\cite[Theorem (2.7)]{BT82}}.  Similarly, we can prove $\lim\limits_{j\rightarrow \infty}\int_X-\chi'(\varphi_j-\rho_j)d(\rho_j-\rho)\wedge d^c(\varphi^{(k)}-\rho)\wedge T=0$. This completes the proof of Proposition \ref{prop: mix chi energy ineq}.

		\end{proof}
		
Similar to \cite{GZ07}, we can see that Proposition \ref{prop: mix chi energy ineq} implies the following proposition:

		\begin{prop}[cf. {\cite[Proposition 2.2]{GZ07}, \cite[Proposition 2.11]{BEGZ10}}]\label{prop: beta energy class}
			$$\mathcal{E}(X,\beta)=\bigcup_{\chi \in \mathcal{W}^-}\mathcal{E}_{\chi}(X,\beta).$$
		\end{prop}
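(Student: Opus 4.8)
\textbf{Proof plan for Proposition \ref{prop: beta energy class}.}

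The plan is to establish the two inclusions separately. The inclusion $\bigcup_{\chi\in\mathcal W^-}\mathcal E_\chi(X,\beta)\subseteq\mathcal E(X,\beta)$ is essentially built into the definition: by construction $\mathcal E_\chi(X,\beta)$ consists of $\beta$-psh functions of finite $\chi$-energy \emph{and} full Monge-Amp\`ere mass, so this direction is immediate. The content is in the reverse inclusion $\mathcal E(X,\beta)\subseteq\bigcup_{\chi\in\mathcal W^-}\mathcal E_\chi(X,\beta)$: given $\varphi\in\mathcal E(X,\beta)$, I must manufacture a single convex weight $\chi$, depending on $\varphi$, with $E_\chi(\varphi)<+\infty$.

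First I would normalize so that $\varphi-\rho\le 0$ and set $\psi:=\varphi-\rho$, $\varphi^{(k)}=\max\{\varphi,\rho-k\}$. The key quantitative input is the behaviour of the "tail masses"
\[
a_k:=\int_{\{\varphi\le\rho-k\}}\langle(\beta+dd^c\varphi)^n\rangle=(\beta+dd^c\varphi^{(k)})^n(\{\varphi\le\rho-k\}),
\]
where the last equality is the identity used inside the proof of Proposition \ref{prop: chi energy}. Since $\varphi\in\mathcal E(X,\beta)$, Lemma \ref{lem: npp basic prop} gives $a_k\to 0$ as $k\to\infty$. Now I would choose an increasing concave function $g:\mathbb R^+\to\mathbb R^+$ with $g(t)\to+\infty$ slowly enough that $\sum_k g(k)(a_{k}-a_{k+1})<\infty$, equivalently (after an Abel summation) $\sum_k (g(k+1)-g(k))\,a_k<\infty$; such a $g$ exists precisely because $a_k\to 0$ (this is the standard fact that a sequence tending to $0$ can be "reweighted" by an unbounded factor while keeping summability — pick $g$ growing like the reciprocal of an upper envelope of $(a_k)$). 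Then I would take $\chi$ to be a smooth convex increasing weight with $\chi(-\infty)=-\infty$, $\chi(t)=t$ for $t\ge 0$, and $-\chi(-t)$ comparable to $g(t)$ for $t\ge 1$; convexity of $\chi$ on $\mathbb R^-$ corresponds to $g$ being concave, which is compatible with the slow-growth requirement.

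With this $\chi$ fixed, I would estimate $E_\chi(\varphi)=\int_X-\chi(\varphi-\rho)\langle(\beta+dd^c\varphi)^n\rangle$ by slicing $X$ along the level sets $\{\rho-(k+1)<\varphi\le\rho-k\}$: on each such set $-\chi(\varphi-\rho)\le -\chi(-k-1)\approx g(k+1)$, and the mass of $\langle(\beta+dd^c\varphi)^n\rangle$ there is $a_k-a_{k+1}$, while on $\{\varphi>\rho-1\}$ the integrand is bounded and the measure has total mass $\le\int_X\beta^n=1$. Summing, $E_\chi(\varphi)\lesssim 1+\sum_k g(k+1)(a_k-a_{k+1})<\infty$, so $\varphi\in\mathcal E_\chi(X,\beta)$. (Here Proposition \ref{prop: chi energy} guarantees that finiteness of this integral against $\langle(\beta+dd^c\varphi)^n\rangle$ is the same as finiteness of the truncated energies, so $\varphi$ genuinely lies in $\mathcal E_\chi(X,\beta)$ as defined; one may also invoke Proposition \ref{prop: mix chi energy ineq} if mixed-energy control is needed along the way.) The main obstacle is purely the bookkeeping in the construction of $\chi$: one must simultaneously arrange smoothness, convexity, the normalization $\chi(t)=t$ for $t\ge 0$, and the divergence $\chi(-\infty)=-\infty$, while keeping $-\chi(-t)$ dominated by the prescribed slowly growing $g$; this is a routine but slightly delicate interpolation, exactly as in the Kähler case treated in \cite[Proposition 2.2]{GZ07}, and nothing in the Hermitian setting affects it since all the analytic inputs (Lemma \ref{lem: npp basic prop}, Propositions \ref{prop: chi energy} and \ref{prop: mix chi energy ineq}) have already been established above.
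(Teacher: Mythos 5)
Your argument is correct and is essentially the standard Guedj--Zeriahi construction that the paper itself invokes without details (the paper only remarks that the result follows ``similar to \cite{GZ07}''): the easy inclusion is definitional here since $\mathcal E_\chi(X,\beta)$ is defined to require full Monge--Amp\`ere mass, and for the converse you correctly use Lemma \ref{lem: npp basic prop} to get decay of the tail masses $a_k$ and then build a convex weight $\chi$ growing slowly enough that $\sum_k(-\chi(-k-1))(a_k-a_{k+1})<+\infty$. The bookkeeping (concavity of $-\chi(-t)$, the Abel summation, and the identification of $a_k$ with the truncated masses via the identity in the proof of Proposition \ref{prop: chi energy}) all checks out.
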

		The  energy class has the following basic properties:
		\begin{prop} [cf. {\cite[Proposition 2.14]{BEGZ10}}]
			The  energy classes satisfy the following properties:
			\begin{itemize}
				\item [(i)] Let $\varphi,\psi\in \mbox{PSH}(X,\beta)$ and $\chi$ be a convex weight function. If $\varphi\in\mathcal E_\chi(X,\beta)$, and $\psi\geq\phi$, then $\psi\in\mathcal E_\chi(X,\beta) $.
				\item [(ii)]The set $\mathcal{E}(X,\beta)$ is an extremal face of $\mbox{PSH}(X,\beta)$, i.e. for all  $\varphi,\psi\in\mbox{PSH}(X,\beta)$, $0< t<1$, $t\varphi+(1-t)\psi\in \mathcal{E}(X,\beta)$ implies that $\varphi,\psi \in \mathcal{E}(X,\beta)$.
				\end{itemize}
		\end{prop}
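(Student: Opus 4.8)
The plan is to reduce everything to a single quantitative comparison estimate for bounded potentials and then to derive (i) and (ii) from it by soft arguments. \textbf{Step 1 (the comparison estimate).} I would first prove that there is a dimensional constant $C_n$ (in fact $C_n=2^n$) such that, for every convex weight $\chi$ and all \emph{bounded} $\beta$-psh functions $u\le v\le\rho$,
\[
\int_X(-\chi)(v-\rho)(\beta+dd^cv)^n\le C_n\int_X(-\chi)(u-\rho)(\beta+dd^cu)^n .
\]
Write $\Theta:=\beta+dd^c\rho$, $\theta_u:=\beta+dd^cu$, $\theta_v:=\beta+dd^cv$, $s:=u-\rho\le0$, $w:=v-\rho\le0$, and $g:=-\chi$, a smooth, nonnegative, nonincreasing, concave function with $g(0)=0$. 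Since $g$ is nonincreasing, $\int_X g(w)\theta_v^n\le\int_X g(s)\theta_v^n$, so it suffices to bound $B_{0,0}$, where $B_{l,j}:=\int_X g(s)\,\Theta^{l}\wedge\theta_u^{\,j}\wedge\theta_v^{\,n-l-j}$ for $l,j\ge0$, $l+j\le n$. For $l+j\le n-1$ I would substitute $\theta_v-\theta_u=dd^c(w-s)$ and integrate by parts to get
\[
B_{l,j}-B_{l,j+1}=\int_X(w-s)\,dd^c\!\big(g(s)\big)\wedge\Theta^{l}\wedge\theta_u^{\,j}\wedge\theta_v^{\,n-l-j-1};
\]
expanding $dd^c(g(s))=g'(s)\,dd^cs+g''(s)\,ds\wedge d^cs$ with $dd^cs=\theta_u-\Theta$, and discarding the two manifestly nonpositive contributions (using $w-s\ge0$, $g''\le0$, $g'\le0$, $\theta_u\ge0$), one is left with
\[
B_{l,j}-B_{l,j+1}\le\int_X(w-s)\,(-g'(s))\,\Theta^{l+1}\wedge\theta_u^{\,j}\wedge\theta_v^{\,n-l-j-1}.
\]
The elementary inequality $(-s)(-g'(s))\le g(s)$ for $s\le0$ (immediate from concavity of $g$ and $g(0)=0$, i.e. the tangent line to $g$ at $s$ lies above $g(0)=0$), together with $0\le w-s\le -s$, gives $(w-s)(-g'(s))\le g(s)$, hence $B_{l,j}\le B_{l,j+1}+B_{l+1,j}$. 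Iterating this recursion down the triangular grid, the terminal terms $B_{l,n-l}=\int_X g(s)\,\Theta^{l}\wedge\theta_u^{\,n-l}$ are bounded by $\int_X g(s)\,\theta_u^{\,n}$ via Proposition \ref{prop: mix chi energy ineq}; counting paths then yields $B_{0,0}\le 2^n\int_X g(s)\,\theta_u^{\,n}$. The integrations by parts are justified as in the proof of Proposition \ref{prop: mix chi energy ineq} (Demailly regularization, Cauchy–Schwarz, and the Bedford–Taylor convergence theorem), the present case being simpler because $u,v,\rho$ are bounded.

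\textbf{Step 2 (part (i)).} By Proposition \ref{prop: chi energy} it is enough to bound $\sup_k\int_X(-\chi)(\psi^{(k)}-\rho)(\beta+dd^c\psi^{(k)})^n$. Subtracting the constant $\sup_X(\psi-\rho)$ from both $\varphi$ and $\psi$ — which affects neither membership in $\mathcal E_\chi$ nor the finiteness of the supremum in Proposition \ref{prop: chi energy} — I may assume $\varphi\le\psi\le\rho$, so that $\varphi^{(k)}\le\psi^{(k)}\le\rho$ are bounded. Step 1 then gives $\int_X(-\chi)(\psi^{(k)}-\rho)(\beta+dd^c\psi^{(k)})^n\le 2^n\int_X(-\chi)(\varphi^{(k)}-\rho)(\beta+dd^c\varphi^{(k)})^n$, and the right-hand side is uniformly bounded since $\varphi\in\mathcal E_\chi$ (Proposition \ref{prop: chi energy} again). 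Combined with Proposition \ref{prop: beta energy class} ($\mathcal E(X,\beta)=\bigcup_{\chi\in\mathcal W^-}\mathcal E_\chi(X,\beta)$), this also shows that $\mathcal E(X,\beta)$ is stable under passing to larger functions: if $\varphi\in\mathcal E(X,\beta)$ and $\psi\ge\varphi$ then $\psi\in\mathcal E(X,\beta)$.

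\textbf{Step 3 (part (ii)).} Let $u:=t\varphi+(1-t)\psi\in\mathcal E(X,\beta)$; by Proposition \ref{prop: beta energy class}, $u\in\mathcal E_\chi$ for some convex weight $\chi$, and after subtracting constants I may assume $\varphi,\psi\le\rho$, hence $u\le\rho$. Put $v_k:=t\varphi^{(k)}+(1-t)\psi^{(k)}$: it is bounded $\beta$-psh with $u^{(k)}\le v_k\le\rho$ (indeed $v_k\ge u$ and $v_k\ge\rho-k$, so $v_k\ge u^{(k)}$). Step 1 gives $\int_X(-\chi)(v_k-\rho)(\beta+dd^cv_k)^n\le 2^n\int_X(-\chi)(u^{(k)}-\rho)(\beta+dd^cu^{(k)})^n$, which is uniformly bounded since $u\in\mathcal E_\chi$. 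On the other hand $\beta+dd^cv_k\ge t(\beta+dd^c\varphi^{(k)})\ge0$ (positive closed currents with bounded potentials) forces $(\beta+dd^cv_k)^n\ge t^n(\beta+dd^c\varphi^{(k)})^n$, while $\psi^{(k)}\le\rho$ forces $v_k-\rho\le t(\varphi^{(k)}-\rho)\le0$, so $(-\chi)(v_k-\rho)\ge(-\chi)(t(\varphi^{(k)}-\rho))$ and
\[
\int_X(-\chi)(v_k-\rho)(\beta+dd^cv_k)^n\ \ge\ t^n\int_X(-\chi)\!\big(t(\varphi^{(k)}-\rho)\big)(\beta+dd^c\varphi^{(k)})^n .
\]
Thus $\sup_k\int_X(-\chi)(t(\varphi^{(k)}-\rho))(\beta+dd^c\varphi^{(k)})^n<\infty$, and Proposition \ref{prop: chi energy} applied to the convex weight $\tilde\chi(\,\cdot\,):=\chi(t\,\cdot\,)$ yields $\varphi\in\mathcal E_{\tilde\chi}(X,\beta)\subseteq\mathcal E(X,\beta)$; interchanging $\varphi$ and $\psi$ (and $t$, $1-t$) gives $\psi\in\mathcal E(X,\beta)$.

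The only genuinely delicate point is Step 1: legitimizing the integrations by parts for the mixed currents $\Theta^{l}\wedge\theta_u^{\,j}\wedge\theta_v^{\,n-l-j}$ and for $g\in C^2$ composed with bounded potentials, and arranging the finite induction so that its base cases fall exactly under Proposition \ref{prop: mix chi energy ineq}. Everything in Steps 2 and 3 is then formal.
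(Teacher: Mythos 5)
Your argument is correct. Note first that the paper offers no proof of this proposition at all --- it simply points to \cite[Proposition 2.14]{BEGZ10} --- so what you have written is essentially the standard Guedj--Zeriahi/BEGZ argument made self-contained, and it is worth recording. Two remarks. First, your Step 1 is, in statement, exactly Proposition \ref{prop: fund inequ} of the paper (bounded $\varphi\le\psi\le\rho$, same constant $2^n$); that proposition carries its own proof (the one-step inequality $\int_X-\chi(\varphi-\rho)(\beta+dd^c\psi)\wedge T\le 2\int_X-\chi(\varphi-\rho)(\beta+dd^c\varphi)\wedge T$ iterated $n$ times) and does not depend on the present statement, so you could simply cite it and skip the triangular recursion in $B_{l,j}$; your recursion is a correct alternative bookkeeping (the tangent-line inequality $-s\,g'(s)\le g(s)$ for concave $g$ with $g(0)=0$ and the path count $\sum_l\binom{n}{l}=2^n$ both check out), it just threads the factor $\beta+dd^c\rho$ through the mixed products instead of only $\theta_u$ and $\theta_v$. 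Second, in Step 3 your $\tilde\chi(s):=\chi(ts)$ satisfies $\tilde\chi(s)=ts$ rather than $s$ on $[0,+\infty)$, so it is not a weight function in the paper's normalized sense; this is cosmetic --- either replace it by $t^{-1}\chi(t\,\cdot)$, which defines the same energy class, or observe directly that the finiteness of $\sup_k\int_X-\chi\bigl(t(\varphi^{(k)}-\rho)\bigr)(\beta+dd^c\varphi^{(k)})^n$ forces $(\beta+dd^c\varphi^{(k)})^n(\{\varphi\le\rho-k\})\le C/(-\chi(-tk))\to 0$, which is condition (b) of Lemma \ref{lem: npp basic prop} and already gives $\varphi\in\mathcal E(X,\beta)$. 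With these adjustments the proof is complete.
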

			The following comparison principle is an important tool for later use. 
				\begin{thm}[{\cite[Proposition 3.9]{LWZ23}}]\label{thm: comparison prin}
			Let $\varphi,\psi\in \mathcal{E}(X,\beta),$ then
			$$\int_{\{\varphi\textless \psi\}}\langle(\beta+dd^c\varphi)^n\rangle\leq \int_{\{\varphi\textless \psi\}}\langle(\beta+dd^c\psi)^n\rangle.$$
		\end{thm}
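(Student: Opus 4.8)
The plan is to establish the inequality first for bounded potentials, where it reduces to Bedford--Taylor theory together with Stokes' theorem, and then to recover the general case by the canonical truncation $\varphi^{(k)}=\max(\varphi,\rho-k)$ and the full-mass hypothesis. Throughout, write $\beta_\varphi:=\beta+dd^c\varphi$. The only feature of the Hermitian (possibly non-K\"ahler) setting that is used is that $\beta$ is closed: this guarantees that for any two bounded $\beta$-psh functions $u,v$ one has the total-mass identity $\int_X \beta_u^n=\int_X\beta_v^n=\int_X\beta^n$, obtained by expanding $\beta_u^n-\beta_v^n$ as $dd^c(u-v)$ wedged with a positive closed background and integrating by parts. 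This integration by parts I would justify exactly as in the proof of Proposition \ref{prop: mix chi energy ineq}, via Demailly's regularization \cite{Dem12}, local convolution, and the Bedford--Taylor convergence theorem \cite[Theorem (2.7)]{BT82}; no K\"ahler hypothesis is needed because $d\beta=0$.

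For the bounded case, fix $\delta>0$ and set $u_\delta:=\max(\varphi,\psi-\delta)$, a bounded $\beta$-psh function with $u_\delta\geq\varphi$. The sets $\{\varphi>\psi-\delta\}$ and $\{\varphi<\psi-\delta\}$ are plurifine open, and by the locality of the Bedford--Taylor product on plurifine open sets one has $\mathbf{1}_{\{\varphi>\psi-\delta\}}\beta_{u_\delta}^n=\mathbf{1}_{\{\varphi>\psi-\delta\}}\beta_\varphi^n$ and $\mathbf{1}_{\{\varphi<\psi-\delta\}}\beta_{u_\delta}^n=\mathbf{1}_{\{\varphi<\psi-\delta\}}\beta_\psi^n$. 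Comparing the total-mass identity $\int_X\beta_{u_\delta}^n=\int_X\beta_\varphi^n$ across the two regions and cancelling the common contribution over $\{\varphi>\psi-\delta\}$ isolates the contact set $\{\varphi=\psi-\delta\}$, and the standard comparison of Monge--Amp\`ere masses on the contact set of a maximum then produces the desired inequality over $\{\varphi<\psi-\delta\}$. Letting $\delta\downarrow 0$ and using that $\{\varphi<\psi\}=\bigcup_{\delta>0}\{\varphi<\psi-\delta\}$ is an increasing union gives the comparison principle for bounded $\varphi,\psi$.

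To pass to $\varphi,\psi\in\mathcal E(X,\beta)$, set $\varphi^{(k)}=\max(\varphi,\rho-k)$ and $\psi^{(k)}=\max(\psi,\rho-k)$ and observe the exact identity $\{\varphi^{(k)}<\psi^{(k)}\}=\{\varphi<\psi\}\cap\{\psi>\rho-k\}$. Applying the bounded comparison principle to $\varphi^{(k)},\psi^{(k)}$ on this set, I would then convert the bounded products back into non-pluripolar products using the locality relation recalled after Lemma \ref{lem: npp basic prop}, namely $\mathbf{1}_{\{\varphi>\rho-k\}}\langle\beta_\varphi^n\rangle=\mathbf{1}_{\{\varphi>\rho-k\}}\beta_{\varphi^{(k)}}^n$ and its analogue for $\psi$. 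The contribution of $\{\varphi\le\rho-k\}$ on the $\varphi$-side is controlled by $\beta_{\varphi^{(k)}}^n(\{\varphi\le\rho-k\})\to 0$, which holds precisely because $\varphi\in\mathcal E(X,\beta)$, by Lemma \ref{lem: npp basic prop}(b). Since non-pluripolar products charge no pluripolar set, the sets $\{\varphi<\psi\}\cap\{\psi>\rho-k\}\cap\{\varphi>\rho-k\}$ increase, up to a pluripolar set, to $\{\varphi<\psi\}$, and monotone convergence upgrades the bounded inequalities to the inequality of Theorem \ref{thm: comparison prin}.

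The main obstacle is twofold. Analytically, the delicate point in the bounded case is the contact-set term over $\{\varphi=\psi-\delta\}$: its sign must be extracted from the behaviour of the Monge--Amp\`ere measure of a maximum, and I expect to handle it either by the pointwise comparison on the contact set or by letting $\delta\downarrow 0$ along a sequence avoiding the at most countably many values of $\delta$ for which the pairwise disjoint level sets $\{\psi-\varphi=\delta\}$ carry mass for the finitely many finite measures involved. In the unbounded limit, the crucial issue is to ensure that no Monge--Amp\`ere mass escapes to $\{\varphi=-\infty\}$ or leaks away as $k\to\infty$; this is exactly where the full-mass hypothesis $\varphi,\psi\in\mathcal E(X,\beta)$, through the characterization in Lemma \ref{lem: npp basic prop}, is indispensable, and it is also the reason the statement is confined to the class $\mathcal E(X,\beta)$.
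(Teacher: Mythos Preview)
The paper does not supply a proof of this statement; it is simply quoted from \cite[Proposition 3.9]{LWZ23}. Your outline---prove the bounded case via $u_\delta=\max(\varphi,\psi-\delta)$, plurifine locality of the Bedford--Taylor product, and the total-mass identity $\int_X\beta_u^n=\int_X\beta^n$ (valid because $d\beta=0$), then pass to $\mathcal E(X,\beta)$ through the truncations $\varphi^{(k)},\psi^{(k)}$ using the exact identity $\{\varphi^{(k)}<\psi^{(k)}\}=\{\varphi<\psi\}\cap\{\psi>\rho-k\}$ and the full-mass condition via Lemma~\ref{lem: npp basic prop}---is exactly the standard argument (cf.\ \cite[Theorem 1.5]{GZ07} and \cite[Corollary 2.3]{BEGZ10}), which is what the cited reference carries out in the present setting.

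One caution worth flagging: if you execute your bounded-case computation carefully, the inequality you will obtain is
\[
\int_{\{\varphi<\psi\}}\langle(\beta+dd^c\psi)^n\rangle\;\le\;\int_{\{\varphi<\psi\}}\langle(\beta+dd^c\varphi)^n\rangle,
\]
i.e.\ the \emph{smaller} potential carries the \emph{larger} Monge--Amp\`ere mass on the set $\{\varphi<\psi\}$. This is the standard comparison principle; the displayed inequality in the statement here has $\varphi$ and $\psi$ transposed on the right-hand sides and appears to be a typographical slip. Your method is correct and proves the correct direction.
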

		Next, we introduce a monotone convergence theorem, which is used to study  the complex non-pluripolar Monge-Amp\`ere equation.
		\begin{thm}[cf. {\cite[Theorem 2.17]{BEGZ10}}] \label{thm: mono converge}
			Let $\varphi_j \in \mathcal{E}(X,\beta)$ be a sequence decreasing or increasing a.e. (in Lebesgue measure) to $\varphi \in \mathcal{E}(X,\beta)$, then
			$$\lim_{j\rightarrow \infty}\langle(\beta+dd^c\varphi_j)^n\rangle=\langle(\beta+dd^c\varphi)^n\rangle.$$
		\end{thm}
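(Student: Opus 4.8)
The plan is to prove that the measures $\mu_j:=\langle(\beta+dd^c\varphi_j)^n\rangle$ converge weakly to $\mu:=\langle(\beta+dd^c\varphi)^n\rangle$ by localizing at each truncation level and then controlling the tails uniformly in $j$. Write $V:=\int_X\beta^n$, so that all $\mu_j$ and $\mu$ have total mass $V$ since $\varphi_j,\varphi\in\mathcal E(X,\beta)$. Fix $k$ and set $\varphi_j^{(k)}:=\max\{\varphi_j,\rho-k\}$, $\nu_{j,k}:=(\beta+dd^c\varphi_j^{(k)})^n$, which is of total mass $V$ and, by the plurifine locality recorded after Lemma \ref{lem: npp basic prop}, agrees with $\mu_j$ on the Borel set $\{\varphi_j>\rho-k\}$ (here one uses that $\varphi_j^{(k)}$ is bounded, so its non-pluripolar product is the Bedford--Taylor product). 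Consequently $\mu_j$ and $\nu_{j,k}$ carry the same tail mass $m_{j,k}:=\nu_{j,k}(\{\varphi_j\le\rho-k\})=\mu_j(\{\varphi_j\le\rho-k\})$, whence for any continuous $g$ with $\|g\|_\infty\le 1$ one has $|\int g\,d\mu_j-\int g\,d\nu_{j,k}|\le 2m_{j,k}$, and likewise $|\int g\,d\mu-\int g\,d\nu_k|\le 2m_k$ with $\nu_k:=(\beta+dd^c\varphi^{(k)})^n$ and $m_k:=\mu(\{\varphi\le\rho-k\})$. The triangle inequality gives
\[
\left|\int_X g\,d\mu_j-\int_X g\,d\mu\right|\le 2m_{j,k}+2m_k+\left|\int_X g\,d\nu_{j,k}-\int_X g\,d\nu_k\right|,
\]
so it suffices to establish: (i) for each fixed $k$, $\nu_{j,k}\rightharpoonup\nu_k$ as $j\to\infty$; and (ii) the uniform tail decay $\sup_j m_{j,k}\to0$ together with $m_k\to0$ as $k\to\infty$.

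For (i), at fixed $k$ the truncations $\varphi_j^{(k)}$ are uniformly bounded $\beta$-psh functions decreasing (resp.\ increasing a.e.) to $\varphi^{(k)}$; in the increasing case one first identifies $(\sup_j\varphi_j^{(k)})^\ast$ with $\varphi^{(k)}$. The local Bedford--Taylor convergence theorem \cite[Theorem (2.7)]{BT82}, applied over a finite cover, then yields $\nu_{j,k}\rightharpoonup\nu_k$. The decay $m_k\to0$ is exactly condition (b) of Lemma \ref{lem: npp basic prop} for $\varphi\in\mathcal E(X,\beta)$, so only the \emph{uniform} tail decay in (ii) remains.

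The heart of the matter is $\sup_j m_{j,k}\to0$. Using Proposition \ref{prop: beta energy class} I would choose a convex weight $\chi\in\mathcal W^-$ for which the smallest member of the sequence — namely $\varphi$ in the decreasing case and $\varphi_1$ in the increasing case — lies in $\mathcal E_\chi(X,\beta)$. Since every $\varphi_j$ dominates this member, the monotonicity of the energy classes (item (i) of the proposition above: $\psi\ge\varphi$ and $\varphi\in\mathcal E_\chi$ imply $\psi\in\mathcal E_\chi$) places all $\varphi_j$ in $\mathcal E_\chi(X,\beta)$, and I claim $M:=\sup_j E_\chi(\varphi_j)<\infty$. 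Granting this, on $\{\varphi_j\le\rho-k\}$ we have $\varphi_j^{(k)}-\rho=-k$, so by the estimate proved in the converse direction of Proposition \ref{prop: chi energy},
\[
-\chi(-k)\,m_{j,k}=\int_{\{\varphi_j\le\rho-k\}}-\chi(\varphi_j^{(k)}-\rho)\,\nu_{j,k}\le\int_X-\chi(\varphi_j^{(k)}-\rho)\,\nu_{j,k}\le E_\chi(\varphi_j).
\]
Hence $m_{j,k}\le M/(-\chi(-k))$, which tends to $0$ uniformly in $j$ as $k\to\infty$ because $\chi(-\infty)=-\infty$. Together with (i) and $m_k\to0$, this gives $\mu_j\rightharpoonup\mu$.

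The main obstacle is the uniform energy bound $\sup_j E_\chi(\varphi_j)<\infty$, which I would obtain from monotonicity of the $\chi$-energy: $\psi\ge\varphi\Rightarrow E_\chi(\psi)\le E_\chi(\varphi)$. This I would prove by the same integration-by-parts scheme as in Proposition \ref{prop: mix chi energy ineq}, telescoping $E_\chi(\varphi)-E_\chi(\psi)$ into a finite sum of terms of the shape $\int_X\chi'(\cdot)\,d(\psi-\varphi)\wedge d^c(\cdots)\wedge(\cdots)$ — carried out first on the bounded truncations and then passed to the limit via Proposition \ref{prop: chi energy} — each summand being nonnegative since $\chi'\ge0$. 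It then follows that $E_\chi(\varphi_j)\le E_\chi(\varphi)$ in the decreasing case and $E_\chi(\varphi_j)\le E_\chi(\varphi_1)$ in the increasing case, both finite by the choice of $\chi$. A secondary delicate point is the increasing case of (i), where convergence holds only almost everywhere and one must justify replacing $\sup_j\varphi_j^{(k)}$ by its upper semicontinuous regularization before invoking the Bedford--Taylor theorem.
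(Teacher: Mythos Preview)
Your overall strategy---truncate at level $k$, invoke Bedford--Taylor convergence for the bounded functions $\varphi_j^{(k)}$, and control the tail masses $m_{j,k}$ uniformly in $j$ via a weight $\chi\in\mathcal W^-$---is correct and is essentially the argument of \cite[Theorem 2.17]{BEGZ10}, which is all the paper invokes. The reduction to the three-term estimate and the treatment of (i) and of $m_k\to0$ are fine.

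The only objection concerns your proposed proof of the exact monotonicity $E_\chi(\psi)\le E_\chi(\varphi)$ for $\varphi\le\psi\le\rho$. The telescoping you describe does not produce summands of a fixed sign: terms of the shape $\int_X\chi'(\cdot)\,d(\psi-\varphi)\wedge d^c(\cdots)\wedge T$ are not nonnegative merely because $\chi'\ge0$, since $d(\psi-\varphi)\wedge d^c(\cdots)$ need not be a positive current when the two factors are gradients of different functions. (The integration by parts in Proposition \ref{prop: mix chi energy ineq} works precisely because both $d$ and $d^c$ fall on the \emph{same} function $\varphi^{(k)}-\rho$, giving the positive form $d(\varphi^{(k)}-\rho)\wedge d^c(\varphi^{(k)}-\rho)$.) So this step, as sketched, does not go through.

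Fortunately you do not need exact monotonicity. The paper already proves the weaker inequality $E_\chi(\psi)\le 2^n E_\chi(\varphi)$ for $\varphi\le\psi\le\rho$ in Proposition \ref{prop: fund inequ}, extended to $\mathcal E_\chi(X,\beta)$ in Remark \ref{rem: chi ene ineq}. After normalizing so that $\varphi_j\le\rho$ for all $j$ (the sequence is uniformly bounded above in either monotone case), this gives $E_\chi(\varphi_j)\le 2^n E_\chi(\varphi)$ in the decreasing case and $E_\chi(\varphi_j)\le 2^n E_\chi(\varphi_1)$ in the increasing case, hence $\sup_j E_\chi(\varphi_j)<\infty$, and your tail bound $m_{j,k}\le M/(-\chi(-k))$ follows exactly as you wrote. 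With this substitution your argument is complete and matches the approach in \cite{BEGZ10}.
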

		\begin{proof}
	The proof is the same as \cite[Theorem 2.17]{BEGZ10}.
		\end{proof}
	The following proposition states that  the non-pluripolar product of the  supremum of a sequence of quasi-plurisubharmonic functions is preserved, which is important for our analysis.

		\begin{prop} [cf.{\cite[Proposition 2.20]{BEGZ10}}] \label{prop: preserve sup}
			Let $\varphi_j$ be a sequence of arbitrary $\beta-$psh functions uniformly bounded from above, and let $\varphi=(\sup_j \varphi_j)^*$. Suppose that $\mu$ is a positive measure such that $\left\langle(\beta+dd^c\varphi_j)^n\right\rangle \geq \mu$ for all $j$. If $\varphi$ has full Monge-Amp\`ere mass, then $\left\langle(\beta+dd^c\varphi)^n\right\rangle \geq \mu$.	
		\end{prop}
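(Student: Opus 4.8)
The idea is to follow the strategy of \cite[Proposition 2.20]{BEGZ10}, adapted to the Hermitian setting where the background form $\beta$ is only assumed closed (not $dd^c$-closed or K\"ahler). First I would reduce to a monotone situation: replace $\varphi_j$ by $\psi_N := \bigl(\sup_{j \leq N}\varphi_j\bigr)^*$, an increasing sequence of $\beta$-psh functions with $\psi_N \uparrow \varphi$ a.e. Since each $\psi_N$ is an upper envelope of finitely many $\beta$-psh functions, on the plurifine-open locus $\{\psi_N = \varphi_{j}\}$ (for the relevant index) the non-pluripolar Monge-Amp\`ere measure of $\psi_N$ agrees with that of $\varphi_j$; using the locality of the non-pluripolar product in the plurifine topology (a property carried over from \cite[Proposition 1.4]{BEGZ10}), one gets $\langle(\beta+dd^c\psi_N)^n\rangle \geq \mu$ for every $N$. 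The key point at this stage is that $\psi_N$ is a maximum of the $\varphi_j$, and on each piece of the partition $\{\psi_N=\varphi_{j_0}\}\setminus\bigcup_{j<j_0}\{\psi_N=\varphi_j\}$ the measure coincides with $\langle(\beta+dd^c\varphi_{j_0})^n\rangle \geq \mu$; summing over the finitely many pieces gives the claim.

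Next I would pass to the limit $N \to \infty$. Here is where the hypothesis that $\varphi$ has full Monge-Amp\`ere mass is essential. Since $\psi_N \uparrow \varphi$ a.e.\ and $\varphi \in \mathcal{E}(X,\beta)$, the extremal-face property (Proposition 2.14(ii) in the excerpt, i.e.\ $\mathcal{E}(X,\beta)$ being a face of $\mathrm{PSH}(X,\beta)$) together with monotonicity of masses forces $\psi_N \in \mathcal{E}(X,\beta)$ for all $N$ (each $\psi_N \geq \varphi_1$ up to the usual normalization, and one can alternatively invoke Proposition 2.14(i): $\psi_N$ dominates some fixed function in a finite-energy class). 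Then the monotone convergence theorem for the non-pluripolar product, Theorem 2.17 in the excerpt, applies and gives $\langle(\beta+dd^c\psi_N)^n\rangle \to \langle(\beta+dd^c\varphi)^n\rangle$ weakly. Combining with $\langle(\beta+dd^c\psi_N)^n\rangle \geq \mu$ for every $N$ and the fact that weak limits preserve the inequality $\geq \mu$ for positive measures, we conclude $\langle(\beta+dd^c\varphi)^n\rangle \geq \mu$.

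I expect the main obstacle to be the locality step: verifying carefully that for the finite maximum $\psi_N$, the non-pluripolar product decomposes correctly over the plurifine partition and that the inequality $\langle(\beta+dd^c\varphi_j)^n\rangle \geq \mu$ transfers to $\langle(\beta+dd^c\psi_N)^n\rangle \geq \mu$. In the K\"ahler/big setting of \cite{BEGZ10} this rests on \cite[Proposition 1.4]{BEGZ10}, and the Remark following Definition 2.1 in the present excerpt asserts that these basic properties of the non-pluripolar product hold verbatim here, so the argument goes through; one must only be mildly careful that the sets $\{\psi_N > \rho - k\}$ and $\{\varphi_j > \rho - k\}$ interact well with the truncations $\psi_N^{(k)}, \varphi_j^{(k)}$. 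A secondary point worth spelling out is that $\psi_N \in \mathcal{E}(X,\beta)$, for which the cleanest route is to note $\psi_N \geq \varphi$ is false but $\psi_N \uparrow \varphi$, so instead one applies Proposition 2.14(ii) to $\tfrac12(\psi_N) + \tfrac12(\text{something in }\mathcal E)$ — or, more directly, observe that $\int_X \langle(\beta+dd^c\psi_N)^n\rangle \geq \int_X \mu$-parts plus a tail estimate; in fact the simplest argument is that $\psi_N \uparrow \varphi$ implies $\limsup_N \int_X\langle(\beta+dd^c\psi_N)^n\rangle \leq \int_X\beta^n$ with equality in the limit, and since each $\langle(\beta+dd^c\psi_N)^n\rangle$ is dominated in total mass by $\int_X\beta^n$ and dominates the mass of $\mu$-pieces, monotonicity of the sequence pins each $\psi_N$ in $\mathcal E(X,\beta)$. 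This bookkeeping is routine and I would not belabor it.
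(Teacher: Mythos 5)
Your overall outline (reduce to the increasing sequence $\psi_N=(\sup_{j\le N}\varphi_j)^*$ via plurifine locality, then pass to the limit) is the strategy of \cite[Proposition 2.20]{BEGZ10}, which is all the paper itself invokes. However, your limit step has a genuine gap: you route the passage $\psi_N\uparrow\varphi$ through Theorem \ref{thm: mono converge}, which requires \emph{every} $\psi_N$ to lie in $\mathcal E(X,\beta)$, and none of your justifications for this actually work. The hypothesis $\langle(\beta+dd^c\varphi_j)^n\rangle\ge\mu$ does not place $\varphi_j$ in any finite-energy class (take $\varphi_1$ with analytic singularities and $\mu$ of small total mass: then $\psi_1=\varphi_1\notin\mathcal E(X,\beta)$ while all hypotheses hold). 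The extremal-face property runs in the wrong direction — it would require exhibiting $\varphi$ as a convex combination involving $\psi_N$, which is not available; Proposition on domination (i) needs a finite-energy minorant, which $\varphi_1$ need not be; and the claim that ``$\limsup_N\int_X\langle(\beta+dd^c\psi_N)^n\rangle\le\int_X\beta^n$ with equality in the limit \dots pins each $\psi_N$ in $\mathcal E$'' is a non sequitur: even granting monotonicity of total masses (itself a nontrivial fact not established in this Hermitian setting), one only gets an upper bound on $\int_X\langle(\beta+dd^c\psi_N)^n\rangle$, never the lower bound $\ge\int_X\beta^n$ that full mass requires. The correct limit passage, as in \cite{BEGZ10}, works instead with the bounded canonical truncations $\psi_N^{(k)}=\max\{\psi_N,\rho-k\}$ for each fixed $k$: Bedford--Taylor convergence along bounded increasing sequences gives $(\beta+dd^c\psi_N^{(k)})^n\to(\beta+dd^c\varphi^{(k)})^n$ weakly, the inequality against the fixed minorant $\mathds{1}_{\{\psi_{N_0}>\rho-k\}}\mu$ survives the weak limit, and only then does one let $k\to\infty$ using the definition of the non-pluripolar product and the non-pluripolarity of $\mu$. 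This avoids ever needing $\psi_N\in\mathcal E(X,\beta)$.

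A secondary, smaller point: your plurifine decomposition of $\langle(\beta+dd^c\psi_N)^n\rangle$ over the sets $\{\psi_N=\varphi_{j_0}\}\setminus\bigcup_{j<j_0}\{\psi_N=\varphi_j\}$ is not directly licensed by locality, since these sets are not plurifine open (locality applies on plurifine open sets such as $\{\varphi_{j_0}>\varphi_j\}$, not on the equality loci). You flag this as the main obstacle and defer to \cite[Proposition 1.4]{BEGZ10}, which is acceptable, but the standard repair (shifting one function by a small constant, or covering $X$ by the plurifine open sets $\{\varphi_i>\varphi_j-\varepsilon\}$ and controlling the leftover band) should be recorded explicitly if you want the argument to be self-contained.
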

		\begin{proof}
		The proof is the same as \cite[Proposition 2.20]{BEGZ10}.
	\end{proof}
    We present a lemma that outlines a basic property of convex weight functions.
		\begin{lem}\label{lem: modified psh}
			Let $\chi \in \mathcal{W}^-$ and let  $\varphi \in \mbox{PSH}(X,\beta)$. Then  $\chi(\varphi-\rho)+\rho \in \mbox{PSH}(X,\beta)$.
		\end{lem}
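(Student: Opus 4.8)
The plan is to set $h:=\chi(\varphi-\rho)+\rho$ and to verify directly the two defining properties of a $\beta$-psh function for $h$: that $\beta+dd^c h\geq 0$ as a current, and that $h$ is upper semicontinuous (the $L^1_{\mathrm{loc}}$ and $\not\equiv-\infty$ requirements will come for free once $h$ is identified with a $\beta$-psh function).

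The two features of a convex weight $\chi\in\mathcal W^-$ that I will use are that $0\le\chi'\le 1$ on $\mathbb R$ (increasing gives $\chi'\ge 0$; convexity together with $\chi'\equiv 1$ on $[0,\infty)$ gives $\chi'\le 1$; and moreover $\chi'>0$ everywhere, since otherwise $\chi$ would be constant on a half-line, contradicting $\chi(-\infty)=-\infty$) and the supporting-line formula $\chi(t)=\sup_{s\in\mathbb R}\bigl[\chi(s)+\chi'(s)(t-s)\bigr]$. Substituting $t=\varphi-\rho$ and adding $\rho$ gives the pointwise identity
\[
h=\sup_{s\in\mathbb R} g_s,\qquad g_s:=\chi'(s)\,\varphi+\bigl(1-\chi'(s)\bigr)\rho+\bigl(\chi(s)-s\chi'(s)\bigr),
\]
valid on all of $X$ (at points where $\varphi=-\infty$ both sides equal $-\infty$, using $\chi'(s)>0$). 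Since $\chi'(s)\in[0,1]$, each $g_s$ is a convex combination of the $\beta$-psh functions $\varphi$ and $\rho$ plus a constant, hence $g_s\in\mathrm{PSH}(X,\beta)$; moreover $g_0=\varphi$, so $h\ge\varphi$, whence $h\not\equiv-\infty$, and $h$ is bounded above because $\varphi-\rho$ is bounded above and $\chi$ is increasing. By Choquet's lemma and the standard stability of $\mathrm{PSH}(X,\beta)$ under upper-regularized suprema of families uniformly bounded above, $h^*:=(\sup_s g_s)^*\in\mathrm{PSH}(X,\beta)$, and $h\le h^*$ on $X$.

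It therefore suffices to prove $h=h^*$, i.e.\ that $h$ is upper semicontinuous; this is the only delicate point, because when $\rho$ is merely bounded (not continuous) the function $\varphi-\rho$ is a difference of upper semicontinuous functions and need not itself be upper semicontinuous, so the naive composition argument ``$\chi$ continuous increasing applied to a usc function'' does not apply. I will argue pointwise: fix $x_0$, choose $x_j\to x_0$ with $h(x_j)\to\limsup_{x\to x_0}h(x)$, and pass to a subsequence so that $\varphi(x_j)\to m\le\varphi(x_0)$ and $\rho(x_j)\to\ell\le\rho(x_0)$ with $\ell$ finite; then $h(x_j)=\chi(\varphi(x_j)-\rho(x_j))+\rho(x_j)\to\chi(m-\ell)+\ell$ (with the convention $\chi(-\infty)=-\infty$ when $m=-\infty$). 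If $m=-\infty$ this limit is $-\infty\le h(x_0)$. If $m$ is finite, then so is $\varphi(x_0)$, and since $\chi$ is increasing ($m\le\varphi(x_0)$) and $\chi'\le 1$ with $\ell\le\rho(x_0)$ (so that $\chi(\varphi(x_0)-\ell)-\chi(\varphi(x_0)-\rho(x_0))\le\rho(x_0)-\ell$ by the mean value theorem),
\[
\chi(m-\ell)+\ell\le\chi(\varphi(x_0)-\ell)+\ell\le\chi(\varphi(x_0)-\rho(x_0))+\rho(x_0)=h(x_0).
\]
Hence $\limsup_{x\to x_0}h(x)\le h(x_0)$, so $h$ is upper semicontinuous and $h=h^*\in\mathrm{PSH}(X,\beta)$.

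The main obstacle is exactly this upper-semicontinuity step; it is here, together with the bound $\chi'(s)\in[0,1]$ used to build the $g_s$, that the hypothesis $\chi\in\mathcal W^-$ (rather than an arbitrary increasing weight) is genuinely needed. If one only needs the lemma for continuous $\rho$, as in Theorem \ref{thm: main 1-2}, this step is immediate and the proof reduces to the positivity argument above. One could alternatively obtain $\beta+dd^c h\ge 0$ by regularizing $\varphi,\rho$ by decreasing sequences of smooth quasi-psh functions (Demailly's theorem), applying the smooth chain rule $dd^c\chi(u)=\chi'(u)\,dd^c u+\chi''(u)\,du\wedge d^c u$ with $\chi''\ge 0$ and $0\le\chi'\le1$, and passing to the limit; but this still leaves the same upper-semicontinuity issue to be resolved at the end.
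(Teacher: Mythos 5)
Your proof is correct, but it takes a genuinely different route from the paper's. The paper localizes the problem, regularizes $u=\varphi+\psi_i$ and $v=\rho+\psi_i$ by decreasing smooth convolutions $u_j,v_j$, checks $dd^c[\chi(u_j-v_j)+v_j]\ge 0$ by the smooth chain rule together with $0\le\chi'\le1$ and $\chi''\ge0$, and then observes that $\chi(u_j-v_j)+v_j$ is a \emph{decreasing} sequence of psh functions (the monotonicity again using $\chi$ increasing and $\chi'\le1$); the decreasing limit is automatically psh, so upper semicontinuity of $\chi(u-v)+v$ is obtained for free rather than checked by hand. You instead work globally and without regularization: the supporting-line formula $\chi(t)=\sup_s[\chi(s)+\chi'(s)(t-s)]$ exhibits $h=\chi(\varphi-\rho)+\rho$ as a supremum of convex combinations $\chi'(s)\varphi+(1-\chi'(s))\rho+c_s$ of the two $\beta$-psh functions (this is where $\chi'(s)\in(0,1]$ enters), so $h^*\in\mathrm{PSH}(X,\beta)$, and the only remaining issue is $h=h^*$, which you settle by a direct $\limsup$ computation using usc of $\varphi$ and $\rho$, monotonicity of $\chi$, and the Lipschitz bound $\chi'\le1$. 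Both arguments hinge on exactly the same two properties of $\chi\in\mathcal W^-$; your version has the merit of isolating the upper-semicontinuity difficulty explicitly (a point the paper's decreasing-sequence trick quietly absorbs) and of avoiding regularization, while the paper's version is shorter once one accepts the standard convolution machinery. I see no gap in your argument: the subsequential-limit step is legitimate because $\rho$ is bounded (so $\ell$ is finite) and $\varphi$ is bounded above, and the degenerate case $\varphi(x)=-\infty$ is correctly handled via $\chi'(s)>0$.
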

		\begin{proof}
			
			The problem can be localized, so we only need to prove that $\chi(u-v)+v$ is plurisubharmonic in the case where $u,v \in \mbox{PSH}(B)$ and $v$ is locally bounded, where $B$ is the unit ball in $\mathbb{C}^n$.
			
			Let $u_j, v_j$ be a decreasing sequence of psh functions converging to $u,v$ respectively, obtained through standard convolutions. Since $\chi'\leq 1$, we can prove that $\chi(u_j-v_j)+v_j$ is psh by direct computation:
			\begin{align*}dd^c[\chi(u_j-v_j)+v_j]&=\chi'(u_j-v_j)dd^c(u_j-v_j)+\chi''(u_j-v_j)d(u_j-v_j)\wedge d^c(u_j-v_j)+dd^cv_j\\
				&\geq\chi'(u_j-v_j)dd^c(u_j-v_j)+dd^cv_j\geq \chi'(u_j-v_j)dd^cu_j\geq0.
			\end{align*}
			Moreover, it is easy to see that $\{\chi(u_j-v_j)+v_j\}$ is decreasing:
			\[\chi(u_{j+1}-v_{j+1})+v_{j+1}\leq \chi(u_j-v_{j+1})+v_{j+1}\leq \chi(u_j-v_j)+v_j.\]
			This completes our proof.
		\end{proof}
		
		For convex weight function $\chi$, we have the following fundamental inequality
		\begin{prop}[cf. {\cite[Lemma 2.3]{GZ07}}]\label{prop: fund inequ}
			Let  $\varphi,\psi\in\mbox{PSH}(X,\beta)\cap L^{\infty}(X)$, $\varphi\leq \psi \leq \rho$, and let $\chi$ be a convex weight function. Then   $$\int_X -\chi(\psi-\rho)(\beta+dd^c\psi)^n\leq 2^n\int_X -\chi(\varphi-\rho)(\beta+dd^c \varphi)^n. $$
		\end{prop}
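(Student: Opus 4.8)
The plan is to follow the strategy of \cite[Lemma 2.3]{GZ07}, adapting it to the Hermitian setting where $\beta$ need not be closed-positive but where integration by parts is still available for bounded $\beta$-psh functions (as justified in the proof of Proposition \ref{prop: mix chi energy ineq}). Set $f:=\varphi-\rho\leq g:=\psi-\rho\leq 0$, both bounded. The first step is to reduce to comparing $\int_X -\chi(g)(\beta+dd^c\psi)^n$ with an expression involving the mixed Monge-Amp\`ere measures $(\beta+dd^c\varphi)^j\wedge(\beta+dd^c\psi)^{n-j}$. Writing $\beta+dd^c\psi = (\beta+dd^c\varphi) + dd^c(g-f)$ and expanding the $n$-th power is not quite the right move because $g-f$ has no sign; instead I would interpolate through the functions $\psi_t := \varphi + t(\psi-\varphi)$, $t\in[0,1]$, which are all in $\mbox{PSH}(X,\beta)\cap L^\infty$ and satisfy $\varphi\leq\psi_t\leq\rho$, and differentiate the quantity $t\mapsto \int_X -\chi(\psi_t-\rho)(\beta+dd^c\psi_t)^n$, or more robustly, directly estimate term by term after a binomial expansion of $(\beta+dd^c\psi)^n = \sum_{j=0}^n \binom{n}{j}(\beta+dd^c\varphi)^{n-j}\wedge (dd^c(g-f))^j$.

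The core computation is an integration-by-parts estimate on each mixed term. For a fixed bounded weight $\chi$ one shows, using $dd^c(g-f) = (\beta+dd^c\psi)-(\beta+dd^c\varphi)$ and integrating by parts to move the $dd^c$ off $(g-f)$, that
\[
\int_X -\chi(g)\,(\beta+dd^c\varphi)^{n-1}\wedge(\beta+dd^c\psi) \leq \int_X -\chi(f)\,(\beta+dd^c\varphi)^n + \int_X -\chi(g)\,(\beta+dd^c\varphi)^{n-1}\wedge(\beta+dd^c\psi),
\]
where the sign-favourable direction comes from convexity of $\chi$ (so $-\chi$ is concave and $\chi'\geq 0$), exactly as in the last inequality of Proposition \ref{prop: mix chi energy ineq}; the gradient term $-\int_X \chi'(g)\, d(g)\wedge d^c(g)\wedge(\cdots)\leq 0$ is discarded. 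Iterating this to peel off one factor of $\beta+dd^c\psi$ at a time, and bookkeeping the extra "lower order" contributions $\int_X -\chi(g)(\beta+dd^c\varphi)^{j}\wedge(\beta+dd^c\psi)^{n-j}$ that appear, one arrives after $n$ steps at a bound of the shape $\int_X -\chi(g)(\beta+dd^c\psi)^n \leq \sum_{j} c_{n,j}\int_X -\chi(f)(\beta+dd^c\varphi)^j\wedge(\beta+dd^c\psi)^{n-j}$ with $\sum_j c_{n,j} = 2^n$; then Proposition \ref{prop: mix chi energy ineq} (applied with $\rho$ replaced by $\varphi$, which is licit since $\varphi\in\mbox{PSH}(X,\beta)\cap L^\infty$, so that all mixed terms are dominated by $\int_X -\chi(f)(\beta+dd^c\varphi)^n$) collapses the right-hand side to $2^n\int_X -\chi(\varphi-\rho)(\beta+dd^c\varphi)^n$.

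The main obstacle I anticipate is the rigorous justification of the integration by parts for the mixed currents $(\beta+dd^c\varphi)^{a}\wedge(\beta+dd^c\psi)^{b}$ when $\beta$ is merely a smooth non-closed... actually $\beta$ is closed here but not positive, and $\varphi,\psi$ are only bounded, not smooth; so one must run the Demailly-regularization-plus-convolution argument already carried out in detail in the proof of Proposition \ref{prop: mix chi energy ineq}, with the Cauchy--Schwarz control of cross terms and the Bedford--Taylor convergence theorem \cite[Theorem (2.7)]{BT82}. A secondary technical point is to keep the weight $\chi$ bounded throughout so that all integrands are bounded (this is where $\varphi,\psi\in L^\infty$ enters); since $\chi$ here need only be a convex weight function and the statement is about bounded potentials, one may first prove the inequality for $\max(\chi,-N)$ and let $N\to\infty$ by monotone convergence. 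I would present the one-step estimate carefully and then indicate the iteration, referring back to Proposition \ref{prop: mix chi energy ineq} for the analytic justification of each integration by parts.
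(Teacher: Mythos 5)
Your overall skeleton is the right one and matches the paper's: first use monotonicity of $\chi$ to replace the weight $-\chi(\psi-\rho)$ by $-\chi(\varphi-\rho)$, then swap the factors $(\beta+dd^c\psi)$ for $(\beta+dd^c\varphi)$ one at a time against mixed currents, justifying the integration by parts by regularization and Bedford--Taylor convergence. But the core one-step estimate, which is the entire content of the proposition, is not established. The inequality you display is vacuous: its left-hand side reappears verbatim as the second term of its right-hand side, so it carries no information and cannot drive an iteration. The inequality you actually need is, for every positive closed current $T$ of bidimension $(1,1)$,
\begin{equation*}
0\leq \int_X -\chi(\varphi-\rho)\,(\beta+dd^c\psi)\wedge T \;\leq\; 2\int_X -\chi(\varphi-\rho)\,(\beta+dd^c\varphi)\wedge T,
\end{equation*}
applied with $T=(\beta+dd^c\varphi)^j\wedge(\beta+dd^c\psi)^{n-1-j}$; iterating this $n$ times is precisely where the constant $2^n$ comes from, whereas in your write-up the claim $\sum_j c_{n,j}=2^n$ is asserted without any accounting.

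Moreover, the mechanism you propose for the one-step estimate --- discarding a gradient term $-\int\chi'(g)\,dg\wedge d^cg\wedge(\cdots)\leq 0$ as in Proposition \ref{prop: mix chi energy ineq} --- is the wrong one here. That mechanism works when the argument of the weight is the difference of the two potentials being exchanged; here the weight is $\chi(\varphi-\rho)$ while the exchanged potentials are $\psi$ and $\varphi$, so the cross term $d(\psi-\varphi)\wedge d^c\chi(\varphi-\rho)$ has no sign. The correct argument integrates by parts to get $\int_X\chi(\varphi-\rho)\,dd^c\psi\wedge T=\int_X\psi\, dd^c\chi(\varphi-\rho)\wedge T$, and then exploits Lemma \ref{lem: modified psh} (that $\chi(\varphi-\rho)+\rho\in\mbox{PSH}(X,\beta)$, using $\chi'\leq 1$) together with the full ordering $\varphi\leq\psi\leq\rho\leq 0$ to trade $\psi$ for $\varphi$ and then $-\varphi+\rho$ for $-\chi(\varphi-\rho)$; the two resulting contributions each bounded by $\int_X-\chi(\varphi-\rho)(\beta+dd^c\varphi)\wedge T$ produce the factor $2$. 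Your final step is also flawed: Proposition \ref{prop: mix chi energy ineq} ``with $\rho$ replaced by $\varphi$'' would require the weight to be $\chi(\cdot-\varphi)$, not $\chi(\varphi-\rho)$, so it does not collapse the mixed terms $\int_X-\chi(\varphi-\rho)(\beta+dd^c\varphi)^j\wedge(\beta+dd^c\psi)^{n-j}$ into $\int_X-\chi(\varphi-\rho)(\beta+dd^c\varphi)^n$; indeed, if such a cost-free collapse were available the proposition would hold with constant $1$ rather than $2^n$.
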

		\begin{proof}
			
			The proof is a slight modification of the proof in \cite[Lemma 2.3]{GZ07}. The key point is to prove that for any positive closed current $T$ of bi-dimension $(1,1)$ on $X$,
				$$0 \leq \int_X -\chi(\varphi-\rho)(\beta+dd^c\psi)\wedge T \leq 2\int_X -\chi(\varphi-\rho)(\beta+dd^c\varphi)\wedge T.$$
				From the above inequality, we can complete the proof by observing that $\int_X -\chi(\psi-\rho)(\beta+dd^c\psi)^n \leq \int_X -\chi(\varphi-\rho)(\beta+dd^c\psi)^n$, and letting $T=(\beta+dd^c\varphi)^j\wedge(\beta+dd^c\psi)^{n-1-j}$, where $0\leq j\leq n-1$.
			By Lemma \ref{lem: modified psh}, $\chi(\varphi-\rho)+\rho$ is $\beta$-psh. By a local regularization argument and Bedford-Taylor's convergence theorem \cite{BT82}, we have that
			\begin{align*}
			&(\chi(\varphi-\rho)+\rho)dd^c\psi\wedge T-\psi dd^c(\chi(\varphi-\rho)+\rho)\wedge T
			 \\&=d[((\chi(\varphi-\rho)+\rho)d^c\psi-\psi d^c(\chi(\varphi-\rho)+\rho))\wedge T],
			\\
			&	\rho dd^c\psi\wedge T-\psi dd^c\rho\wedge T=d[(\rho d^c\psi-\psi d^c\rho)\wedge T].
			\end{align*}
				Then, by Stokes theorem, we get the following formula of integration by parts:
				$$\int_X\chi(\varphi-\rho)dd^c\psi\wedge T=\int_X \psi dd^c\chi(\varphi-\rho)\wedge T.$$
				The rest of the proof is the same as in \cite[Lemma 2.3]{GZ07}, and we omit the details here.
		
		\end{proof}
		
		\begin{rem}\label{rem: chi ene ineq}
			By Proposition \ref{prop: chi energy}, Theorem  \ref{thm: mono converge},  the above  energy inequality also holds for functions $\varphi,\psi\in\mathcal{E}_{\chi}(X,\beta)$, such that $\varphi\leq \psi\leq \rho$.
		\end{rem}
		\begin{prop}[cf. {\cite[Corollary 2.4]{GZ07}}]\label{prop:char echi}
			Let $\varphi\in \mbox{PSH}(X,\beta)$, and let $\chi\in \mathcal{W}^-$. Then $\varphi \in \mathcal{E}_{\chi}(X,\beta)$ if and only if   there exists a decreasing sequence $\varphi_k\in \mathcal{E}_{\chi}(X,\beta) \searrow \varphi$, such that $$\sup_kE_{\chi}(\varphi_k)=\sup_k \int_X -\chi(\varphi_k-\rho) (\beta+dd^c \varphi_k)^n \textless +\infty$$
		\end{prop}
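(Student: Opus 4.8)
The plan is to derive both implications from Proposition~\ref{prop: chi energy}, which characterizes $\varphi\in\mathcal{E}_\chi(X,\beta)$ by the finiteness of $\sup_j\int_X-\chi(\varphi^{(j)}-\rho)(\beta+dd^c\varphi^{(j)})^n$ for the canonical truncations $\varphi^{(j)}=\max\{\varphi,\rho-j\}$, together with the fundamental energy inequality of Proposition~\ref{prop: fund inequ} (in the form of Remark~\ref{rem: chi ene ineq}), which compares the $\chi$-energies of two ordered functions up to the factor $2^n$.

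For the implication ``$\varphi\in\mathcal{E}_\chi(X,\beta)\Rightarrow$ there is such a sequence'' I would simply take $\varphi_k:=\varphi^{(k)}=\max\{\varphi,\rho-k\}$. These are bounded $\beta$-psh functions decreasing to $\varphi$; being bounded they have full Monge-Amp\`ere mass and finite $\chi$-energy, hence lie in $\mathcal{E}_\chi(X,\beta)$, and $E_\chi(\varphi^{(k)})=\int_X-\chi(\varphi^{(k)}-\rho)(\beta+dd^c\varphi^{(k)})^n$ is bounded in $k$ precisely because $\varphi\in\mathcal{E}_\chi(X,\beta)$, by Proposition~\ref{prop: chi energy}. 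So this direction is essentially a reformulation of that proposition.

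The real content is the converse. Suppose $\varphi_k\in\mathcal{E}_\chi(X,\beta)$, $\varphi_k\searrow\varphi$, and $C:=\sup_k E_\chi(\varphi_k)<+\infty$. First I would normalize so that $\varphi_k\le\rho$ for all $k$: replacing $\varphi_k$ by $\varphi_k-a$ with $a:=\max\{\sup_X(\varphi_1-\rho),0\}<+\infty$ keeps the sequence decreasing and, since $\chi'\le1$ gives $-\chi(t-a)\le-\chi(t)+a$, increases $\sup_kE_\chi(\varphi_k)$ by at most $a\int_X\beta^n$; because $dd^c$ annihilates constants, $\varphi-a\in\mathcal{E}_\chi(X,\beta)$ is equivalent to $\varphi\in\mathcal{E}_\chi(X,\beta)$, so no generality is lost. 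Now fix $j$ and put $\varphi_k^{(j)}:=\max\{\varphi_k,\rho-j\}$; these are bounded $\beta$-psh, hence in $\mathcal{E}_\chi(X,\beta)$, they satisfy $\varphi_k\le\varphi_k^{(j)}\le\rho$, and for fixed $j$ they decrease to $\varphi^{(j)}=\max\{\varphi,\rho-j\}$ as $k\to\infty$. Applying Remark~\ref{rem: chi ene ineq} to $\varphi_k\le\varphi_k^{(j)}\le\rho$ yields
$$E_\chi(\varphi_k^{(j)})\le 2^nE_\chi(\varphi_k)\le 2^nC$$
uniformly in $j$ and $k$. Letting $k\to\infty$ with $j$ fixed I would show $E_\chi(\varphi_k^{(j)})\to E_\chi(\varphi^{(j)})$: writing $-\chi(\varphi_k^{(j)}-\rho)=\rho-h_k$ where, by Lemma~\ref{lem: modified psh}, $h_k:=\chi(\varphi_k^{(j)}-\rho)+\rho$ is a bounded $\beta$-psh function decreasing to $h:=\chi(\varphi^{(j)}-\rho)+\rho$, this follows from the Bedford-Taylor monotone convergence theorem (see \cite{BT82}, \cite{BEGZ10}) applied separately to $\int_X\rho\,(\beta+dd^c\varphi_k^{(j)})^n$ and $\int_X h_k\,(\beta+dd^c\varphi_k^{(j)})^n$. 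Hence $E_\chi(\varphi^{(j)})=\int_X-\chi(\varphi^{(j)}-\rho)(\beta+dd^c\varphi^{(j)})^n\le2^nC$ for every $j$, and Proposition~\ref{prop: chi energy} gives $\varphi\in\mathcal{E}_\chi(X,\beta)$.

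The step I expect to be the main obstacle is this last passage $k\to\infty$: the Monge-Amp\`ere measures of $\varphi_k^{(j)}$ converge weakly (e.g.\ by Theorem~\ref{thm: mono converge}), but the integrand $-\chi(\varphi_k^{(j)}-\rho)$ also moves, so one genuinely needs a convergence theorem for the full products $h_k(\beta+dd^c\varphi_k^{(j)})^n$ along a decreasing sequence of bounded $\beta$-psh functions; the observation that makes this available is Lemma~\ref{lem: modified psh}, which turns $\chi(\varphi_k^{(j)}-\rho)$ into a genuine bounded decreasing $\beta$-psh family. The remaining ingredients --- the normalization and the two invocations of Proposition~\ref{prop: chi energy} --- are routine.
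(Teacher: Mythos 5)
Your proof is correct and follows essentially the same route as the paper: take $\varphi_k=\varphi^{(k)}$ for the easy direction, and for the converse bound the double truncations $\varphi_k^{(j)}=\max\{\varphi_k,\rho-j\}$ uniformly in $k$, pass to the limit $k\to\infty$ by Bedford--Taylor monotone convergence, and conclude with Proposition~\ref{prop: chi energy}. The only (harmless) deviation is that you get the uniform bound from the $2^n$-inequality of Proposition~\ref{prop: fund inequ} (after an explicit normalization $\varphi_k\le\rho$), whereas the paper uses the sharper comparison $\int_X-\chi(\varphi_k^{(j)}-\rho)(\beta+dd^c\varphi_k^{(j)})^n\le E_\chi(\varphi_k)$ already established in the proof of Proposition~\ref{prop: chi energy}; your decomposition $-\chi(\varphi_k^{(j)}-\rho)=\rho-h_k$ via Lemma~\ref{lem: modified psh} is a slightly more careful justification of the same limit step.
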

		\begin{proof}
			One side of the argument is straightforward, as we can take $\varphi_k=\varphi^{(k)}=\max\{\varphi,\rho-k\}$. To establish the other side, assume that such a decreasing sequence exists. We will now demonstrate that $\int_X -\chi(\varphi^{(k)}-\rho)(\beta+dd^c\varphi^{(k)})^n$ is uniformly bounded. Fix $j$. We know that $\varphi_k^{(j)}:=\max\{ \varphi_k,\rho-j\}$ decreases to $\varphi^{(j)}$. Therefore, using Bedford-Taylor monotone convergence theorem \cite{BT82}, we obtain
			
			\[\chi(\varphi_k^{(j)}-\rho)(\beta+dd^c \varphi_k^{(j)})^n \rightarrow \chi(\varphi^{(j)}-\rho)(\beta+dd^c\varphi^{(j)})^n \text{ weakly as } k \rightarrow \infty.\]
			
			Then we have
			\[\int_X -\chi(\varphi^{(j)}-\rho)(\beta+dd^c\varphi^{(j)})^n=\lim_{k\rightarrow \infty}\int_X -\chi(\varphi_k^{(j)}-\rho)(\beta+dd^c\varphi_k^{(j)})^n.\]
			
			Furthermore, by the proof of Proposition \ref{prop: chi energy},
			
			\[\int_X -\chi(\varphi_k^{(j)}-\rho)(\beta+dd^c\varphi_k^{(j)})^n \leq  \int_X -\chi(\varphi_k-\rho)(\beta+dd^c\varphi_k)^n,\  \forall j\in \mathbb{N}^*.\]
			
			According to the assumption, the right-hand side of the above inequality is uniformly bounded. Hence, the proof is complete.
			
		\end{proof}

		\begin{prop}\label{prop: criterion chi ene}
			Consider $\chi\in \mathcal{W}^-$. Let $\varphi_j,\varphi \in \mbox{PSH}(X,\beta) $ such that  $ \varphi_j \rightarrow \varphi$ in $L^1(X,\omega^n)$ as $j\rightarrow +\infty$. If the sequence $\{\varphi_j \in \mathcal{E}_{\chi}(X,\beta)\}$ has uniformly bounded $\chi$-energy, then $\varphi \in \mathcal{E}_{\chi}(X,\beta).$
				\end{prop}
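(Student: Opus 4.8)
The plan is to manufacture, out of the given $L^1$-convergent sequence, a \emph{decreasing} sequence in $\mathcal{E}_\chi(X,\beta)$ with limit $\varphi$ and uniformly bounded $\chi$-energy, and then conclude via Proposition \ref{prop:char echi}. Before that I would first reduce to the normalized case $\varphi_j\le\rho$ and $\varphi\le\rho$. Since $\varphi_j\to\varphi$ in $L^1(X,\omega^n)$, the integrals $\int_X\varphi_j\,\omega^n$ are bounded, so by the standard normalization estimate for $\beta$-psh functions the $\varphi_j$ are uniformly bounded from above; as $\rho$ is bounded, there is a constant $a\ge0$ with $\varphi_j-a\le\rho$ and $\varphi-a\le\rho$. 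Replacing $\varphi_j,\varphi$ by $\varphi_j-a,\varphi-a$ leaves the non-pluripolar products $\langle(\beta+dd^c\,\cdot\,)^n\rangle$ and the $L^1$-convergence unchanged; moreover, since $\chi$ is convex with $\chi'\le1$, one has $-\chi(t)\le-\chi(t-a)\le-\chi(t)+a$ for $a\ge0$, whence $E_\chi(u)\le E_\chi(u-a)\le E_\chi(u)+a\int_X\beta^n$ for every $u\in\mathcal{E}_\chi(X,\beta)$. In particular $\mathcal{E}_\chi(X,\beta)$ is translation-invariant and the translated sequence still has uniformly bounded $\chi$-energy, so it suffices to prove the statement for it; I therefore assume $\varphi_j\le\rho$ and $\varphi\le\rho$ from now on.

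Passing to a subsequence I may assume $\varphi_j\to\varphi$ a.e., and I set $\psi_k:=\bigl(\sup_{j\ge k}\varphi_j\bigr)^*$. Each $\psi_k$ is $\beta$-psh, satisfies $\psi_k\le\rho$, and $(\psi_k)_k$ is decreasing; moreover $\psi_k\searrow\varphi$. Indeed $\sup_{j\ge k}\varphi_j\ge\limsup_j\varphi_j=\varphi$ a.e., so $\psi_k\ge\varphi$ a.e. and hence everywhere (two $\beta$-psh functions comparable off a Lebesgue-null set are comparable everywhere); conversely, since negligible sets are pluripolar, $\psi_k=\sup_{j\ge k}\varphi_j$ off a pluripolar (hence Lebesgue-null) set, so $\lim_k\psi_k=\limsup_j\varphi_j=\varphi$ a.e., and again this forces $\lim_k\psi_k=\varphi$ everywhere. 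Now $\psi_k\ge\varphi_k\in\mathcal{E}_\chi(X,\beta)$, so $\psi_k\in\mathcal{E}_\chi(X,\beta)$ by the monotonicity of the energy classes recalled above (cf. \cite[Proposition 2.14]{BEGZ10}); and since $\varphi_k\le\psi_k\le\rho$ with both functions in $\mathcal{E}_\chi(X,\beta)$, the fundamental inequality of Proposition \ref{prop: fund inequ}, extended to $\mathcal{E}_\chi$ in Remark \ref{rem: chi ene ineq}, gives $E_\chi(\psi_k)\le 2^nE_\chi(\varphi_k)\le 2^n\sup_j E_\chi(\varphi_j)<+\infty$.

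Consequently $(\psi_k)$ is a decreasing sequence in $\mathcal{E}_\chi(X,\beta)$ converging to $\varphi$ with $\sup_k E_\chi(\psi_k)<+\infty$, and Proposition \ref{prop:char echi} yields $\varphi\in\mathcal{E}_\chi(X,\beta)$, as claimed. The substantive input — and the only place where convexity of $\chi$ is genuinely used — is the energy comparison $E_\chi(\psi_k)\le 2^nE_\chi(\varphi_k)$ valid when $\varphi_k\le\psi_k\le\rho$, i.e. Remark \ref{rem: chi ene ineq}; everything else is the standard upper-envelope argument, the only mild technical point being the preliminary translation bringing the functions below $\rho$ so that this comparison applies.
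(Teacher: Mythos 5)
Your argument is correct and is essentially the paper's own proof: after normalizing so that $\varphi_j,\varphi\le\rho$, form the upper envelopes $\bigl(\sup_{k\ge j}\varphi_k\bigr)^*$, which decrease to $\varphi$, control their $\chi$-energy via Proposition \ref{prop: fund inequ} and Remark \ref{rem: chi ene ineq}, and conclude with Proposition \ref{prop:char echi}. The extra details you supply (the translation estimate $E_\chi(u)\le E_\chi(u-a)\le E_\chi(u)+a\int_X\beta^n$ and the verification that the envelopes decrease to $\varphi$) are correct elaborations of steps the paper leaves implicit.
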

		\begin{proof}
			Since the sequence $\{\varphi_j\}$ is uniformly bounded in $L^1(X,\omega^n)$, we can infer that they are uniformly bounded from above. Without loss of generality, we can assume $\varphi_j, \varphi \leq \rho$. Let us consider the function 
			
			\[\Phi_j^*:=\left(\sup_{k\geq j}\varphi_k\right)^*\in \mbox{PSH}(X,\beta).\]
			
			Since $\varphi_j\leq \Phi_j^*\leq \rho$, by Remark \ref{rem: chi ene ineq}, the sequence $\{\Phi_j^*\in \mathcal{E}_{\chi}(X,\beta)\searrow \varphi\}$ has uniformly bounded $\chi$-energy. Consequently, by Proposition \ref{prop:char echi}, we can conclude that $\varphi\in \mathcal{E}_{\chi}(X,\beta)$. 
				\end{proof}
		\begin{rem}
		Proposition \ref{prop: fund inequ} and Proposition \ref{prop: criterion chi ene} imply that the $\chi$-energy function on $\mbox{PSH}(X,\beta)$ is lower-semi-continuous, as follows:
			\[ E_\chi(\varphi)\leq 2^n\liminf_{\psi \rightarrow \varphi}E_\chi(\psi),\]
			where $\psi \rightarrow \varphi$ in $L^1(X,\omega^n)$.
			\end{rem}
		
		\section{Range of the  CMA operator on the class $\mathcal E^p(X,\beta)$}\label{sect: range}
		In this section, we investigate the range of the complex Monge-Amp\`ere operator $\langle(\beta+dd^c\cdot)^n\rangle$. According to \cite[Theorem 1]{LWZ23}, without loss of generality, we can assume that the bounded potential $\rho$ satisfies $(\beta+dd^c \rho)^n=\omega^n$.

		\begin{lem}[{\cite[Proposition 2.7]{LWZ23}}]\label{lem: volume cap esti}
			There exist uniform positive constants $C$ and $\alpha$ such that for any Borel set $E$, we have
			$$Vol_{\omega}(E)\leq C\exp(-\alpha Cap_{\beta}(E)^{-\frac{1}{n}}).$$
		\end{lem}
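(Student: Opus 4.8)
The plan is to adapt the classical Alexander--Taylor type capacity estimate, following the scheme of Kołodziej and its Hermitian refinements, reducing the statement to a uniform bound on the Monge--Ampère capacity of the extremal (relative equilibrium) function. First I would recall that, since $\beta + dd^c\rho \geq 0$ with $\rho$ bounded, we may normalize $\rho$ so that $(\beta + dd^c\rho)^n = \omega^n$ (as already assumed in this section), which converts the Lebesgue-type measure $\mathrm{Vol}_\omega$ into a Monge--Ampère measure of a bounded potential. Thus it suffices to estimate $\int_E (\beta + dd^c\rho)^n$ from above in terms of $\mathrm{Cap}_\beta(E)$. For a Borel set $E$, I would introduce the extremal function
\[
u_E := \sup\{ v \in \mathrm{PSH}(X,\beta) : v \leq \rho \text{ on } X, \ v \leq \rho - 1 \text{ on } E \},
\]
and its upper semicontinuous regularization $u_E^*$. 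The key auxiliary facts are: (i) $\mathrm{Cap}_\beta(E)$ is comparable (up to dimensional constants) to $\int_X (\beta + dd^c u_E^*)^n$ restricted appropriately, or more precisely one has the two-sided relation between $\mathrm{Cap}_\beta(E)$ and $\exp(-\sup_X(u_E^* - \rho))^{-1}$ type quantities; and (ii) a uniform $L^1$- or $L^\infty$-control: $M_E := -\sup_X (u_E^* - \rho) \to \infty$ is governed by the capacity via $M_E \leq C\,\mathrm{Cap}_\beta(E)^{-1/n}$ (this is the heart of the Alexander--Taylor inequality).

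The main steps, in order, would be: (1) Establish that $u_E^* \in \mathrm{PSH}(X,\beta)$, that $u_E^* \leq \rho$, and that $u_E^* = \rho - 1$ q.e. on $E$ (up to a pluripolar set), with $(\beta + dd^c u_E^*)^n$ supported on $\overline{E}$ away from where $u_E^* = \rho$. (2) Prove the capacity lower bound: using the definition of $\mathrm{Cap}_\beta$ and testing against a suitable rescaling of $u_E^* - \rho$ (which ranges in an interval of length $\sim M_E$, so $\rho + \frac{1}{M_E}(u_E^* - \rho + M_E)$ or similar lies in the competitor class $\rho \leq v \leq \rho + 1$), derive
\[
\mathrm{Cap}_\beta(E) \geq \frac{1}{M_E^n}\int_X \langle(\beta + dd^c u_E^*)^n\rangle = \frac{1}{M_E^n},
\]
using that $u_E^* \in \mathcal{E}(X,\beta)$ has full mass (since it sits between two bounded $\beta$-psh functions up to the normalization) so its total non-pluripolar mass is $\int_X\beta^n = 1$. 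This gives $M_E \leq \mathrm{Cap}_\beta(E)^{-1/n}$. (3) Establish the volume--sublevel-set estimate: by a uniform Skoda-type integrability / $L^1$-compactness argument for $\beta$-psh functions normalized by $\sup_X(\cdot - \rho) = 0$ (which holds on any compact Hermitian manifold, e.g. via \cite{GZ05}-type uniform estimates), there are uniform constants $C_0, \alpha_0 > 0$ with $\int_X e^{-\alpha_0(v - \sup_X v)}\,\omega^n \leq C_0$ for all $v \in \mathrm{PSH}(X,\beta)$. Apply this to $v = u_E^*$: on $E$ one has $u_E^* - \rho \leq -1$ but we need the gap $M_E$; since $\sup_X(u_E^* - \rho) = -M_E$ only if $E$ is "large", more carefully we use $u_E^* \leq \rho - 1$ on $E$ together with $\sup_X u_E^* = \sup_X\rho + (\text{something} \geq -M_E)$ — the cleanest route is to note $w_E := u_E^* - \sup_X u_E^*$ is a normalized $\beta$-psh function and on $E$, $w_E \leq -M_E$ (wait: rather $w_E \le \rho - 1 - \sup_X u_E^* $, and since $\sup_X u_E^* \ge \sup_X \rho - M_E$... ). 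I would instead directly compare: $\mathrm{Vol}_\omega(E) \le \int_E (\beta+dd^c\rho)^n$, and on $E$, $\rho \geq u_E^* + 1 \geq$ (a normalized potential shifted by $M_E$), so $E \subseteq \{u_E^* - \sup_X u_E^* \leq -c M_E\}$ for a dimensional $c$, whence by Chebyshev and the uniform Skoda bound $\mathrm{Vol}_\omega(E) \leq C_0 e^{-\alpha_0 c M_E}$.

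Combining (2) and (3): $\mathrm{Vol}_\omega(E) \leq C_0 e^{-\alpha_0 c M_E} \leq C \exp(-\alpha\, \mathrm{Cap}_\beta(E)^{-1/n})$ with $\alpha = \alpha_0 c$ and $C = C_0$, which is the claim. The main obstacle I anticipate is Step (3) — the uniform exponential integrability (uniform Skoda estimate) for the family $\mathrm{PSH}(X,\beta)$ on a merely Hermitian (non-Kähler) manifold, and correctly tracking that the relevant sublevel set of the \emph{normalized} extremal function genuinely descends to depth $\sim M_E$ rather than just to depth $1$; this requires either citing a uniform $\alpha$-invariant / compactness statement already available in the Hermitian setting (cf. \cite{LWZ23, GZ05, BEGZ10}) or proving it via a standard Montel-type argument on $\{v \in \mathrm{PSH}(X,\beta) : \sup_X v = 0\}$. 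The capacity lower bound in Step (2), by contrast, is a direct consequence of the definition of $\mathrm{Cap}_\beta$ together with the full-mass property of bounded (hence $\mathcal{E}$-class) potentials and the monotonicity/comparison principle (Theorem \ref{thm: comparison prin}), so it should be routine.
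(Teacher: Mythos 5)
Note first that the paper does not actually prove this lemma: it is quoted verbatim from \cite[Proposition 2.7]{LWZ23}, so the only thing to compare against is the standard Alexander--Taylor/Ko{\l}odziej argument that such a statement invariably rests on. Your overall plan is exactly that argument -- compare $\mathrm{Vol}_\omega$ and $\mathrm{Cap}_\beta$ through an extremal function, bounding the capacity from below by a power of the ``height'' $M_E$ of the extremal function and the volume from above by $e^{-\alpha M_E}$ via a uniform Skoda-type exponential estimate (which is indeed available in the Hermitian setting, being a local statement combined with $L^1$-compactness of $\{v\in\mbox{PSH}(X,\beta):\sup_Xv=0\}$). So the skeleton is right.

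There is, however, a genuine gap in the execution. You define $u_E$ with the two constraints $v\le\rho$ on $X$ and $v\le\rho-1$ on $E$; this is the \emph{relative} extremal function of the capacity, and since $\rho-1$ is itself a competitor it forces $\rho-1\le u_E^*\le\rho$, hence $M_E:=-\sup_X(u_E^*-\rho)\in[0,1]$ (and $M_E=0$ whenever $u_E^*$ touches $\rho$ somewhere, which happens for every reasonable $E\ne X$). Thus $M_E$ never tends to $+\infty$, the rescaling by $1/M_E$ is typically undefined, the claimed bound $\mathrm{Cap}_\beta(E)\ge M_E^{-n}$ would read $\mathrm{Cap}_\beta(E)\ge1$ for all $E$ (false), and the volume bound $\mathrm{Vol}_\omega(E)\le C_0e^{-\alpha_0cM_E}$ becomes vacuous. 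Relatedly, for this relative extremal function the mass of $(\beta+dd^cu_E^*)^n$ carried by $E$ is essentially $\mathrm{Cap}_\beta(E)$ itself, not $\int_X\beta^n=1$, so Step (2) as written is circular. The repair is to use the \emph{global} extremal function $V_E:=\sup\{v\in\mbox{PSH}(X,\beta):v\le\rho\ \text{on}\ E\}$ with no constraint off $E$ and no shift by $-1$: then $V_E^*\ge\rho$, $M_E:=\sup_X(V_E^*-\rho)\to+\infty$ as $E$ shrinks, the function $w:=\rho+\tfrac{1}{1+M_E}(V_E^*-\rho)$ is a legitimate competitor in the definition of $\mathrm{Cap}_\beta$ and yields $\mathrm{Cap}_\beta(E)\ge(1+M_E)^{-n}\int_X\beta^n$ (using that $(\beta+dd^cV_E^*)^n$ has full mass and is concentrated on $\overline{E}$ for $E$ compact non-pluripolar), while on $E$ one has $V_E^*\le\rho$ quasi-everywhere, i.e.\ $V_E^*-\sup_XV_E^*\le-M_E+O(1)$ a.e.\ on $E$, and Skoda gives $\mathrm{Vol}_\omega(E)\le Ce^{-\alpha M_E}$. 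You should also fix the two wrong-direction statements ``$M_E\le C\,\mathrm{Cap}_\beta(E)^{-1/n}$'' (the final chain requires $M_E\ge c\,\mathrm{Cap}_\beta(E)^{-1/n}-O(1)$, which is what your displayed inequality actually provides) and record the reduction from Borel sets to compact non-pluripolar sets by inner regularity of both sides.
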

		
		\begin{lem}[{\cite[Lemma 3.11]{LWZ23}}]\label{lem: cap est upper level set}
		There exist a uniform constant $C'>0$ such that for any $v\in \mbox{PSH}(X,\beta+dd^c\rho)$ with $\sup_X v=0$ and $t>0$, we have
		$$Cap_{\beta}(\{v< -t\})\leq \frac{C'}{t}.$$
		\end{lem}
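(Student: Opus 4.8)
The plan is to bound $\mathrm{Cap}_\beta(\{v<-t\})$ by a Chebyshev-type inequality, thereby reducing matters to a \emph{uniform} bound on the mixed energies $\int_X(-v)(\beta+dd^c w)^n$, where $w$ runs over the test functions defining $\mathrm{Cap}_\beta$. First I would observe that $\sup_X v=0$ gives $v\le 0$, and that $\{v<-t\}=\{\max\{v,-k\}<-t\}$ while $\sup_X\max\{v,-k\}=0$ for any integer $k>t$; hence we may assume $v$ is bounded, so $u:=\rho+v\in\mathrm{PSH}(X,\beta)\cap L^\infty(X)$ and $u\le\rho$. For $w\in\mathrm{PSH}(X,\beta)$ with $\rho\le w\le\rho+1$, the pointwise bound $\mathds 1_{\{v<-t\}}\le\tfrac1t(-v)$ together with positivity of $(\beta+dd^c w)^n$ gives
\[
\int_{\{v<-t\}}(\beta+dd^c w)^n\le\frac1t\int_X(-v)(\beta+dd^c w)^n.
\]
So it suffices to produce $C'>0$ depending only on $(X,\omega,\beta,\rho)$ with $\int_X(-v)(\beta+dd^c w)^n\le C'$ for all such $v,w$; taking the supremum over $w$ in the definition of $\mathrm{Cap}_\beta$ then yields $\mathrm{Cap}_\beta(\{v<-t\})\le C'/t$.

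For the uniform bound I would interpolate between $\beta+dd^c w$ and $\beta+dd^c\rho$. For $0\le j\le n$ set $I_j:=\int_X(-v)(\beta+dd^c w)^j\wedge(\beta+dd^c\rho)^{n-j}$. Then $I_0=\int_X(-v)\,\omega^n\le C_1$, using the normalization $(\beta+dd^c\rho)^n=\omega^n$ and the standard fact that $\{v\in\mathrm{PSH}(X,\beta+dd^c\rho):\sup_X v=0\}$ is bounded in $L^1(X,\omega^n)$ (note $\sup_X v=0$ forces $\inf_X\rho\le\sup_X u\le\sup_X\rho$, so no subsequence of such $u$ escapes to $-\infty$). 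For the increments, writing $S_j:=(\beta+dd^c w)^{j-1}\wedge(\beta+dd^c\rho)^{n-j}$ (a positive \emph{closed} $(n-1,n-1)$-current, since $\beta$ is closed and $w,\rho$ are bounded),
\[
I_j-I_{j-1}=\int_X(-v)\,dd^c(w-\rho)\wedge S_j=-\int_X(w-\rho)\,dd^c v\wedge S_j=\int_X(w-\rho)\big[(\beta+dd^c\rho)-(\beta+dd^c u)\big]\wedge S_j,
\]
using integration by parts and $dd^c v=(\beta+dd^c u)-(\beta+dd^c\rho)$. Since $0\le w-\rho\le 1$ and $(\beta+dd^c u)\wedge S_j\ge 0$, this is at most $\int_X(\beta+dd^c\rho)\wedge S_j=\int_X\beta^n$, the last equality because mixed Monge--Amp\`ere masses of bounded $\beta$-psh functions all equal $\int_X\beta^n$ by Stokes' theorem ($\beta$ being closed), as in the mass identities underlying the non-pluripolar product recalled after Definition \ref{defn: nn product}. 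Summing over $j$, $\int_X(-v)(\beta+dd^c w)^n=I_n\le C_1+n\int_X\beta^n=:C'$, which completes the argument.

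The one step not entirely routine is the integration by parts in the displayed identity for $I_j-I_{j-1}$: in the Hermitian (non-K\"ahler) setting there is no global $dd^c$-Leibniz rule, so moving $dd^c$ from $w-\rho$ onto $v$ must be justified through the local-regularization-and-Bedford--Taylor-convergence scheme already used in the proof of Proposition \ref{prop: mix chi energy ineq}; the fact that $S_j$ is closed is precisely what makes Stokes' theorem applicable there. Granting this, together with the $L^1$-boundedness of normalized $(\beta+dd^c\rho)$-psh functions and the cohomological invariance of mixed masses, the write-up should be short.
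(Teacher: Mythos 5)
Your argument is correct and is essentially the standard proof of this Chern--Levine--Nirenberg-type capacity bound: the paper itself does not reprove the lemma but simply cites \cite[Lemma 3.11]{LWZ23}, whose proof follows the same Chebyshev-plus-interpolation scheme (reduce to $\int_X(-v)(\beta+dd^c w)^n\le C'$ and peel off one factor of $\beta+dd^c w$ at a time by integration by parts). The two ingredients you defer to --- integration by parts against the closed positive current $S_j$, justified by the regularization and Bedford--Taylor convergence scheme already used in Proposition \ref{prop: mix chi energy ineq} (this works precisely because $\beta$ is closed), and the $L^1(\omega^n)$-boundedness of sup-normalized $\beta$-psh functions --- are both available here, so there is no gap.
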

		\begin{rem}
			The above Lemma also holds for any $v\in \mbox{PSH}(X,\beta+dd^c\rho)$ with the normalization condition $\sup_X v=-A$, where $A>0$ is a positive constant.
		\end{rem}
Let $\mathcal{W}^+$ be the class of all smooth concave functions $\chi:\mathbb{R}^-\rightarrow \mathbb{R}^-$ such that $\chi(0)=0$ and $\chi(-\infty)=-\infty$. For $M>0$, we define $\mathcal{W}^+_M$ as follows:
$$\mathcal{W}^+_M:=\{\chi \in \mathcal{W}^+:0\leq|t\chi'(t)|\leq M|\chi(t)|, \forall \ t\in \mathbb{R}^- \}.$$
		\begin{prop}\label{prop: cap estima upper level set 2}
Let $\chi \in \mathcal{W}^-_h\cup \mathcal{W}^+_M$. Then there exists a uniform constant $C>0$ such that for any $v\in \mathcal{E}_\chi(X,\beta+dd^c \rho)$ with $\sup_X v=-1$ and $t>0$, we have
$$Cap_{\beta}(\{v< -t\})\leq \frac{CE_{\chi}(\rho+v)}{-t\chi(-t)}.$$
In particular, if $\chi(t)=-(-t)^p$ for $t\in \mathbb{R}^-$ and $p\geq1$, then
$$Cap_{\beta}(\{v< -t\})\leq \frac{CE_{\chi}(\rho+v)}{t^{p+1}}.$$
		\end{prop}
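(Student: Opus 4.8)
The strategy is to estimate $Cap_{\beta}(\{v<-t\})$ by testing it against the capacity candidates $w\in\mbox{PSH}(X,\beta)$ with $\rho\le w\le\rho+1$, comparing each such $w$ with truncations of $u:=\rho+v$, and then feeding in both the finiteness of $E_{\chi}(\rho+v)$ and the a priori estimate of Lemma \ref{lem: cap est upper level set}. As at the beginning of this section, by \cite[Theorem 1]{LWZ23} we may assume $(\beta+dd^c\rho)^n=\omega^n$; write $u=\rho+v\in\mathcal E_{\chi}(X,\beta)$, so that $u\le\rho-1$ and $E_{\chi}(\rho+v)=\int_X-\chi(v)\langle(\beta+dd^cu)^n\rangle$.

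The core comparison step is the following. Fix a candidate $w$ and $s>0$, and set $\phi:=\max(u,w-s)$, which lies in $\mbox{PSH}(X,\beta)\cap L^{\infty}(X)$ since $\phi\ge w-s\ge\inf_X\rho-s$. Because $w\ge\rho$, on the plurifine open set $\{u<w-s\}$ (which contains $\{v<-s\}$) we have $\phi=w-s$, hence by plurifine locality of the non-pluripolar product \cite[Proposition 1.4]{BEGZ10}, $\mathds{1}_{\{u<w-s\}}(\beta+dd^c\phi)^n=\mathds{1}_{\{u<w-s\}}(\beta+dd^cw)^n$; on $\{u>w-s\}$ one has $\phi=u$, so likewise $\mathds{1}_{\{u>w-s\}}(\beta+dd^c\phi)^n=\mathds{1}_{\{u>w-s\}}\langle(\beta+dd^cu)^n\rangle$. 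Using $\int_X(\beta+dd^c\phi)^n=\int_X\beta^n=\int_X\langle(\beta+dd^cu)^n\rangle$ (the first equality because $\beta$ is $d$-closed, the second because $u$ has full Monge-Amp\`ere mass) one gets
\[\int_{\{v<-s\}}(\beta+dd^cw)^n\le\int_{\{u<w-s\}}(\beta+dd^c\phi)^n\le\langle(\beta+dd^cu)^n\rangle(\{u\le w-s\})\le\langle(\beta+dd^cu)^n\rangle(\{v\le1-s\}),\]
and taking the supremum over $w$ yields $Cap_{\beta}(\{v<-t\})\le\langle(\beta+dd^cu)^n\rangle(\{v\le1-t\})$ for all $t>0$. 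Since $-\chi(v)\ge|\chi(1-t)|$ on $\{v\le1-t\}$, this already gives $Cap_{\beta}(\{v<-t\})\le E_{\chi}(\rho+v)/|\chi(1-t)|$ — the correct order $|\chi(-t)|$ in the denominator, but \emph{without} the extra factor $t$.

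To produce that extra factor one must also invoke Lemma \ref{lem: cap est upper level set}, and here the two weight classes are handled differently. If $\chi\in\mathcal W^-_h$, then by Lemma \ref{lem: modified psh} the function $\chi(v)$ lies in $\mbox{PSH}(X,\beta+dd^c\rho)$ with $\sup_X\chi(v)=\chi(-1)$, so Lemma \ref{lem: cap est upper level set} (in the normalized form of the Remark following it) applies to $\chi(v)$; combined with $\{v<-t\}=\{\chi(v)<\chi(-t)\}$ this supplies a decay of order $1/|\chi(-t)|$ independently of the energy. If instead $\chi\in\mathcal W^+_M$ — which contains $\chi(s)=-(-s)^p$, $p\ge1$, with $M=p$ — Lemma \ref{lem: modified psh} is unavailable, and one instead uses the consequences of $0\le|t\chi'(t)|\le M|\chi(t)|$, namely the doubling bound $|\chi(-2r)|\le2^M|\chi(-r)|$ and $r|\chi'(-r)|\asymp|\chi(-r)|$. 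In both cases the plan is to iterate the core comparison step along a dyadic scale $t,2t,4t,\dots$: bound $\langle(\beta+dd^cu)^n\rangle(\{v\le1-2^kt\})$ in terms of $\int_{\{v<-2^{k-1}t\}}(-\chi(v))\langle(\beta+dd^cu)^n\rangle$, sum the resulting geometric series using $\int_X(-\chi(v))\langle(\beta+dd^cu)^n\rangle=E_{\chi}(\rho+v)<+\infty$, and control the tail with the a priori bound $Cap_{\beta}(\{v<-r\})\le C'/r$ of Lemma \ref{lem: cap est upper level set}; the interplay of the energy (producing the factor $|\chi(-t)|$) with that a priori bound (producing the factor $t$) should yield the full denominator $-t\chi(-t)$. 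Specializing to $\chi(s)=-(-s)^p$ gives $-t\chi(-t)=t^{p+1}$ and $E_{\chi}(\rho+v)=\int_X(-v)^p\langle(\beta+dd^cu)^n\rangle$, which is the stated inequality.

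I expect the main obstacle to be exactly this last combination. The comparison step alone only ``sees'' the non-pluripolar mass of $u$ on sublevel sets, which by itself yields only the factor $|\chi(-t)|$ (via the energy) or, through $\chi(v)$ and Lemma \ref{lem: cap est upper level set}, again only a factor of order $|\chi(-t)|$ — but not their product; likewise Lemma \ref{lem: cap est upper level set} alone gives only $1/r$. Thus the delicate point is to interleave the plurifine-locality identity and Lemma \ref{lem: cap est upper level set} on one common (dyadic) scale while keeping all constants uniform in $t$ and in $\chi$, and — in the absence of Lemma \ref{lem: modified psh} for concave weights — to carry this through using only the structural hypothesis $|t\chi'(t)|\le M|\chi(t)|$.
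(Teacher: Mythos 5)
Your ``core comparison step'' is correct as far as it goes, but it only delivers
$Cap_{\beta}(\{v<-t\})\le \langle(\beta+dd^c\rho+dd^cv)^n\rangle(\{v\le 1-t\})\le C E_{\chi}(\rho+v)/(-\chi(-t))$,
which is weaker than the claimed bound by exactly the factor $t$ --- and you say so yourself. The part of the argument that would produce that factor is the part you have not written: the dyadic iteration is only a plan, and as sketched it does not close. Summing the Chebyshev bounds over the scales $2^k t$ gives $\sum_k E_{\chi}(\rho+v)/|\chi(-2^k t)|$, which for $\chi(s)=-(-s)^p$ is still only of order $E_{\chi}(\rho+v)/t^{p}$, not $t^{p+1}$; and Lemma \ref{lem: cap est upper level set} controls $Cap_{\beta}$ of sublevel sets, i.e.\ the masses of \emph{bounded} competitors $w$, not the non-pluripolar mass $\langle(\beta+dd^c\rho+dd^cv)^n\rangle(\{v\le -s\})$ of the unbounded potential itself, so it cannot simply be multiplied into the same chain of inequalities (if the Monge--Amp\`ere mass of $v$ were dominated by the capacity in the required way, you would essentially be assuming the conclusion). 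The same remark applies to your use of Lemma \ref{lem: modified psh} for $\chi\in\mathcal W^-_h$: applying Lemma \ref{lem: cap est upper level set} to $\chi(v)$ again yields only a bound of order $1/|\chi(-t)|$. So there is a genuine gap at the decisive quantitative step.

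The paper produces the factor $1/t$ by a different mechanism, the rescaling trick: fix a competitor $u$ with $-1\le u\le 0$, observe $\{v<-2t\}\subset\{v/t<-1+u\}$, apply the comparison principle (Theorem \ref{thm: comparison prin}) to pass from $(\beta+dd^c\rho+dd^cu)^n$ to $(\beta+dd^c\rho+dd^c(v/t))^n$ on that set, and then expand
\[
\Bigl(\beta+dd^c\rho+\tfrac{1}{t}\bigl(\beta+dd^c\rho+dd^cv\bigr)\Bigr)^n
\]
binomially over $\{v<-t\}$. The prefactor $1/t$ of the mixed terms is exactly the missing factor; each mixed mass $\int_{\{v<-t\}}(\beta+dd^c\rho)^j\wedge(\beta+dd^c\rho+dd^cv)^{n-j}$ is bounded by $E_{\chi}(\rho+v)/(-\chi(-t))$ via Chebyshev together with Proposition \ref{prop: mix chi energy ineq}; the pure term $\int_{\{v<-t\}}(\beta+dd^c\rho)^n$ is absorbed using Lemma \ref{lem: volume cap esti} combined with Lemma \ref{lem: cap est upper level set}; and the passage from $\{v<-2t\}$ back to $\{v<-t\}$ uses only the doubling $-\chi(-2t)\le 2(-\chi(-t))$ in the convex case and $-\chi(-2t)\le 2^{M}(-\chi(-t))$ in the $\mathcal W^+_M$ case. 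Replacing your dyadic scheme by this rescaling step is what is needed to complete the proof.
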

		
		\begin{proof}
Fix an arbitrary $u \in \mbox{PSH}(X, \beta + dd^c \rho),$ with $-1 \leq u \leq 0$. If $\chi \in \mathcal{W}^-$, then $-\chi(-2t) \leq -2\chi(-t)$ for $t \in \mathbb{R}^+$. Furthermore, if $\chi \in \mathcal{W}_M^+$, then $-\chi(-2t) \leq -2^M \chi(-t)$ for $t \in \mathbb{R}^+$. Therefore, it suffices to prove that 
$$\int_{\{v < -2t\}}(\beta+dd^c\rho+dd^cu)^n \leq \frac{CE_{\chi}(\rho+v)}{-t\chi(-t)}.$$			
To show this, we observe that 
			\begin{align*}
				&\int_{\{v\textless -2t\}}(\beta+dd^c\rho+dd^cu)^n\\
				&\leq \int_{\{\frac{v}{t} \textless -1+u\}}(\beta+dd^c\rho+dd^cu)^n\\
				&\leq \int_{\{\frac{v}{t} \textless -1+u\}}\left(\beta+dd^c\rho+dd^c\frac{v}{t}\right)^n\\
				&\leq \int_{\{v \textless -t\}}\left(\beta+dd^c\rho+dd^c\frac{v}{t}\right)^n\\
				&\leq \int_{\{v \textless -t\}}[\beta+dd^c\rho+\frac{1}{t}(\beta+dd^c\rho+dd^cv)]^n\\
				&=\int_{\{v \textless  -t\}}(\beta+dd^c\rho)^n+\frac{1}{t}\sum_{j=0}^{n-1} \frac{1}{t^{n-j-1}}\binom{n}{j}\int_{\{v \textless -t\}}(\beta+dd^c\rho)^j\wedge(\beta+dd^c\rho+dd^cv)^{n-j}.
			\end{align*}
			By Lemma \ref{lem: volume cap esti} and Lemma \ref{lem: cap est upper level set}, we have 
			\begin{align*}
				\int_{\{v\textless-t\}}(\beta+dd^c\rho)^n&=Vol_{\omega}(\{v\textless -t\})\\
				&\leq C \exp(-\alpha Cap_{\beta}(\{v\textless-t\})^{-\frac{1}{n}})\\ 
				&\leq C \exp\left(-\alpha \left(\frac{t}{C'}\right)^{\frac{1}{n}}\right)=o\left(\frac{1}{-t\chi(-t)}\right).
			\end{align*}

Next, we consider $\int_{\{v < -t\}}(\beta+dd^c\rho)^j\wedge(\beta+dd^c\rho+dd^cv)^{n-j}$, for $j = 0, 1, \ldots, n-1$. 
We observe that 
			$$\int_{\{v \textless -t\}}(\beta+dd^c\rho)^j\wedge(\beta+dd^c\rho+dd^cv)^{n-j}\leq \frac{1}{-\chi(-t)}\int_X-\chi(v)(\beta+dd^c\rho)^j\wedge(\beta+dd^c\rho+dd^cv)^{n-j}.$$
			By Proposition \ref{prop: mix chi energy ineq},
			\begin{align*}
				\int_X-\chi(v)(\beta+dd^c\rho)^j\wedge(\beta+dd^c\rho+dd^cv)^{n-j}\leq\int_X -\chi(v)(\beta+dd^c\rho+dd^cv)^n=E_{\chi}(\rho+v).
			\end{align*}
To summarize, we have$$\int_{\{v\textless -2t\}}(\beta+dd^c\rho+dd^cu)^n \leq \frac{CE_{\chi}(\rho+v)}{-t\chi(-t)}. $$
		\end{proof}
	We have the following $L^1$-estimate of $v \in \mathcal{E}^1(X,\beta+dd^c\rho)$ with respect to the measure $d\mu$, for $\mu \in \mathcal{H}(A,a,\beta)$.
		\begin{lem}\label{lem: l1 est of v}
			Let $\mu \in \mathcal{H}(A,a=\frac{1}{2}+\varepsilon,\beta)$, $\varepsilon>0$. Then there exists a uniform constant $C>0$, for any $v \in \mathcal{E}^1(X,\beta+dd^c\rho)$ with $\sup_X v=-1$, we have
			\[
			\int_X -v d\mu \leq C'E_1(\rho+v)^{\frac{1}{2}+\varepsilon}.
			\]
				\end{lem}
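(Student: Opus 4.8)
The plan is to combine the layer-cake (Fubini) formula with the capacity estimate for superlevel sets in Proposition \ref{prop: cap estima upper level set 2} and the defining property of $\mathcal H(A,a,\beta)$. Since $\sup_X v=-1$ we have $-v\ge 1$ on $X$; moreover $\mu$, being in $\mathcal H(A,a,\beta)$, is non-pluripolar and hence gives no mass to the pluripolar set $\{v=-\infty\}$, so $\int_X(-v)\,d\mu$ is well defined in $[0,+\infty]$. Applying Fubini and using $\mu(X)=1$ together with $\{-v>t\}=X$ for $t<1$,
\[
\int_X(-v)\,d\mu=\int_0^{\infty}\mu(\{-v>t\})\,dt\le \mu(X)+\int_1^{\infty}\mu(\{v<-t\})\,dt=1+\int_1^{\infty}\mu(\{v<-t\})\,dt .
\]
Thus the task reduces to estimating $\mu(\{v<-t\})$ for $t\ge 1$.

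Next I would invoke Proposition \ref{prop: cap estima upper level set 2} with the weight $\chi(t)=-(-t)^p$ for $p=1$; this $\chi$ is linear on $\mathbb R^-$, hence convex, so it lies in $\mathcal W_h^-$, and for it one has $\mathcal E_\chi=\mathcal E^1$ and $E_\chi=E_1$. The proposition then gives $\mathrm{Cap}_\beta(\{v<-t\})\le C\,E_1(\rho+v)\,t^{-2}$, and combining this with $\mu(E)\le A\,\mathrm{Cap}_\beta(E)^{a}$, where $a=\tfrac12+\varepsilon$, yields
\[
\mu(\{v<-t\})\le A\,\bigl(C\,E_1(\rho+v)\bigr)^{a}\,t^{-2a} .
\]
Since $2a=1+2\varepsilon>1$, the integral $\int_1^{\infty}t^{-2a}\,dt=\tfrac{1}{2\varepsilon}$ converges, so
\[
\int_1^{\infty}\mu(\{v<-t\})\,dt\le \frac{A\,C^{a}}{2\varepsilon}\,E_1(\rho+v)^{a}.
\]

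Finally I would bring the estimate into the stated form by absorbing the additive constant $1$. Because $v\in\mathcal E^1(X,\beta+dd^c\rho)$, the function $\rho+v$ has full Monge-Amp\`ere mass, and since $-v\ge 1$,
\[
E_1(\rho+v)=\int_X(-v)\,\langle(\beta+dd^c(\rho+v))^n\rangle\ge \int_X\langle(\beta+dd^c(\rho+v))^n\rangle=\int_X\beta^n=1 ,
\]
hence $1\le E_1(\rho+v)^{a}$. Putting the three displays together gives $\int_X(-v)\,d\mu\le \bigl(1+\tfrac{A\,C^{a}}{2\varepsilon}\bigr)E_1(\rho+v)^{a}$, which is the claimed bound with $C'=1+\tfrac{A\,C^{a}}{2\varepsilon}$.

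The only genuinely delicate point is the convergence of $\int_1^{\infty}t^{-2a}\,dt$: Proposition \ref{prop: cap estima upper level set 2} only produces $t^{-2}$ decay for $\mathrm{Cap}_\beta(\{v<-t\})$ — the exponent $p+1=2$ at $p=1$ — so it is the extra power $a$ coming from $\mu\in\mathcal H(A,a,\beta)$ that must push $2a$ strictly above the divergence threshold $1$; this is exactly where the hypothesis $a=\tfrac12+\varepsilon>\tfrac12$ is used, and it is the reason one works with $\mathcal E^1$ rather than a smaller energy class. Everything else is routine bookkeeping, the main care being to verify that $\chi(t)=-(-t)$ meets the hypotheses of Proposition \ref{prop: cap estima upper level set 2}.
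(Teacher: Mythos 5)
Your proposal is correct and follows essentially the same route as the paper: the layer-cake/Fubini decomposition, the capacity bound for sublevel sets from Proposition \ref{prop: cap estima upper level set 2} with $p=1$, the defining inequality of $\mathcal H(A,\tfrac12+\varepsilon,\beta)$, and the convergence of $\int_1^\infty t^{-1-2\varepsilon}\,dt$. Your explicit justification that $E_1(\rho+v)\ge 1$ (so the additive $\mu(X)$ term can be absorbed into $C'E_1(\rho+v)^{a}$) is a detail the paper leaves implicit, but otherwise the arguments coincide.
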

		\begin{proof}

			Since $\mu \in \mathcal{H}(A,a=\frac{1}{2}+\varepsilon,\beta)$, by the Fubini theorem and Proposition \ref{prop: cap estima upper level set 2}, we have
			\begin{align*}
				\int_X -v d\mu&=\mu(X)+\int_1^{+\infty}d\mu(\{v \textless -t\})dt\\
				&\leq \mu(X)+\int_1^{+\infty}ACap_{\beta}(\{v \textless -t\})^{\frac{1}{2}+\varepsilon}dt\\
				&\leq \mu(X)+\int_1^{+\infty}A\left(\frac{CE_1(\rho+v)}{t^2}\right)^{\frac{1}{2}+\varepsilon}\\
				&\leq C'E_1(\rho+v)^{\frac{1}{2}+\varepsilon}.
			\end{align*}
				\end{proof}
		
		\begin{prop}[cf. {\cite[Proposition 2.8]{GZ07}}]\label{prop: energy conv} 
		Let $\varphi_j,\varphi \in \mathcal{E}^1(X,\beta)$, such that $\varphi_j \rightarrow \varphi$ in $L^1(X,\omega^n)$ and $\lim_{j\rightarrow\infty}\int_X|\varphi_j-\varphi|\langle(\beta+dd^c \varphi_j)^n\rangle=0$. Then $\langle(\beta+dd^c\varphi_j)^n\rangle \rightarrow \langle(\beta+dd^c\varphi)^n\rangle$.
			\end{prop}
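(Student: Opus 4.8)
The plan is to adapt the argument of \cite[Proposition 2.8]{GZ07} to the Hermitian setting, using the tools already assembled in this section. The key structural fact is that the non-pluripolar Monge-Amp\`ere operator is continuous along sequences that converge in energy, and the bridge between $L^1(\omega^n)$-convergence and convergence of the Monge-Amp\`ere measures will be furnished by the weak compactness of probability measures together with the comparison principle (Theorem \ref{thm: comparison prin}). First I would reduce to the case where the sequence is monotone: replacing $\varphi_j$ by $\psi_j:=(\sup_{k\ge j}\varphi_k)^*$ and $\phi_j:=(\sup_{k\ge j}\varphi_k)$... more precisely, sandwich $\varphi$ between a decreasing sequence $\psi_j^{dec}:=(\sup_{k\ge j}\varphi_k)^*\searrow\varphi$ and use that $\psi_j^{dec}\ge\varphi_j$, so that one can transfer the energy bound by Remark \ref{rem: chi ene ineq} / Proposition \ref{prop: criterion chi ene}. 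Since $\varphi\in\mathcal E^1(X,\beta)$, Theorem \ref{thm: mono converge} gives $\langle(\beta+dd^c\psi_j^{dec})^n\rangle\to\langle(\beta+dd^c\varphi)^n\rangle$, which handles the monotone decreasing comparison sequence for free; the real content is to control $\varphi_j$ itself against this comparison sequence.

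Next I would extract from the hypothesis $\int_X|\varphi_j-\varphi|\,\langle(\beta+dd^c\varphi_j)^n\rangle\to 0$ that the measures $\mu_j:=\langle(\beta+dd^c\varphi_j)^n\rangle$ are uniformly "non-concentrated": more concretely, I expect to show these measures have uniformly bounded energy in the sense that $\sup_j\int_X-\chi(\varphi_j-\rho)\,\mu_j<\infty$ for a suitable fixed $\chi\in\mathcal W^-$, using Proposition \ref{prop: beta energy class} to pick $\chi$ adapted to $\varphi$ and then Proposition \ref{prop: fund inequ} (in the form of Remark \ref{rem: chi ene ineq}) to propagate the bound to the $\varphi_j$'s via the sandwich. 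This uniform energy bound, combined with Lemma \ref{lem: l1 est of v}-type capacity estimates (Proposition \ref{prop: cap estima upper level set 2}), shows the family $\{\mu_j\}$ is tight with respect to capacity, hence relatively compact for the weak topology with every cluster point putting no mass on pluripolar sets. Then I would take an arbitrary weak cluster point $\nu$ of $(\mu_j)$ and argue $\nu=\langle(\beta+dd^c\varphi)^n\rangle$: on one side, $\varphi_j\to\varphi$ in $L^1$ and quasi-psh compactness give, via Proposition \ref{prop: preserve sup} applied to $\psi_j^{dec}$, the inequality $\langle(\beta+dd^c\varphi)^n\rangle\ge\nu$ in the limit (since $\mu_k\ge$ the measure of a function dominating... here one uses that along the subsequence the $\psi_j^{dec}$ carry at least the relevant mass); on the other side, both $\nu$ and $\langle(\beta+dd^c\varphi)^n\rangle$ are probability measures (total mass $\int_X\beta^n=1$, using that $\varphi_j,\varphi\in\mathcal E$ have full mass and that no mass escapes to pluripolar sets or to infinity by the capacity tightness), so equality of a dominating probability measure with a probability measure forces $\nu=\langle(\beta+dd^c\varphi)^n\rangle$. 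Since every cluster point equals the same limit, the whole sequence converges.

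The main obstacle I anticipate is the step establishing $\langle(\beta+dd^c\varphi)^n\rangle\ge\nu$ — equivalently, ruling out loss of mass in the weak limit and correctly comparing $\mu_j=\langle(\beta+dd^c\varphi_j)^n\rangle$ with the Monge-Amp\`ere mass of the regularizing supremum $\psi_j^{dec}$. In the K\"ahler case \cite{GZ07} this is handled through a delicate interplay of the comparison principle and integration by parts against test functions; here the Hermitian setting means $\beta$ is only closed (not $dd^c$-exact corrections available globally), so the integration-by-parts manipulations must be routed through the already-proven Hermitian versions (Proposition \ref{prop: mix chi energy ineq} and the integration-by-parts justification given in its proof via Demailly regularization). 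I would also need to be careful that the hypothesis $\int_X|\varphi_j-\varphi|\,\mu_j\to0$ is used in the right place: it is exactly what upgrades "weak convergence of a subsequence of $\mu_j$ to some $\nu$" to "$\nu$ charges the set $\{\varphi=-\infty\}$ with zero mass and $\int(\varphi-\varphi_j)\,d\mu_j\to0$", which is the quantitative input that pins down $\nu$. Once mass is shown not to escape, the comparison principle Theorem \ref{thm: comparison prin} closes the argument by forcing the two probability measures to coincide.
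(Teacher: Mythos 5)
Your overall architecture (extract a weak cluster point $\nu$ of $\mu_j:=\langle(\beta+dd^c\varphi_j)^n\rangle$, show $\nu\le\langle(\beta+dd^c\varphi)^n\rangle$, and conclude from equality of total masses) is reasonable in outline, but the central step --- the domination $\nu\le\langle(\beta+dd^c\varphi)^n\rangle$ --- is exactly where your argument has a genuine gap. Proposition \ref{prop: preserve sup} cannot deliver it: that proposition requires a \emph{fixed} measure $\mu$ with $\langle(\beta+dd^c\varphi_j)^n\rangle\ge\mu$ for \emph{all} $j$, whereas here the measures $\mu_j$ vary and in general admit no common minorant, and a weak cluster point of a sequence of measures is not dominated by the individual terms. (This is why, in the proof of Theorem \ref{thm: main 2}, that proposition is applied to the increasing minorants $\min\{f,j\}\nu$ --- a structure absent in the present generality.) Your parenthetical ``here one uses that along the subsequence the $\psi_j^{dec}$ carry at least the relevant mass'' is precisely the unproven assertion. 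Relatedly, the claim that every cluster point charges no pluripolar set does not follow from the uniform energy bound alone, and the closing appeal to the comparison principle to ``force two probability measures to coincide'' is not an operation the comparison principle performs.

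The mechanism that actually closes the argument --- and the one the paper uses, following \cite[Proposition 2.8]{GZ07} --- is plurifine locality of the non-pluripolar product applied to $\Phi_j:=\max\{\varphi_j,\varphi-\varepsilon_j\}$ (the paper takes $\varepsilon_j=1/j$; one should choose $\varepsilon_j\to0$ with $\varepsilon_j^{-1}\int_X|\varphi_j-\varphi|\,d\mu_j\to0$, which the hypothesis permits). On the plurifine open set $\{\varphi_j>\varphi-\varepsilon_j\}$ one has $\Phi_j=\varphi_j$, so the two Monge--Amp\`ere measures agree there; on the complement, Chebyshev gives $\mu_j(\{\varphi_j\le\varphi-\varepsilon_j\})\le\varepsilon_j^{-1}\int_X|\varphi_j-\varphi|\,d\mu_j\to0$, and since all the measures have the same total mass $\int_X\beta^n$, the mass of $\langle(\beta+dd^c\Phi_j)^n\rangle$ on that set also tends to $0$. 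Finally, $\Phi_j\ge\varphi-\varepsilon_j$ together with $\Phi_j\to\varphi$ in $L^1$ yields convergence in capacity (after truncating at $\rho-K$, i.e.\ working with $\Phi_j^{(K)}$ and $\varphi^{(K)}$ and using Bedford--Taylor), whence $\langle(\beta+dd^c\Phi_j)^n\rangle\to\langle(\beta+dd^c\varphi)^n\rangle$. This is the step your proposal does not supply, and no combination of Proposition \ref{prop: preserve sup} with Theorem \ref{thm: comparison prin} replaces it.
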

		
		\begin{proof}
		
	We replace the corresponding functions $\Phi_j, \Phi^K_j,\varphi^K, j,K\in \mathbb N^*$ in \cite[Proposition 2.8]{GZ07} by the following functions:
	\[
	\Phi_j:=\max\{\varphi_j, \varphi-\frac{1}{j}\},
	\]
	\[
	\Phi_j^{(K)}:=\max\{ \Phi_j, \rho-K\},
	\]
	\[
	\varphi^{(K)}:=\max\{\varphi, \rho-K\}.
	\]
		Then the same proof in \cite[Proposition 2.8]{GZ07} works, and we omit the details here.
	\end{proof}
Let $\mu\in \mathcal{H}(A,a,\beta)$ be such that $\mu(X)=\int_X\beta^n=1$. Let $\{U_j\}$ be a covering of $X$ by coordinate balls $U_j$, and let $\{\theta_j\}$ be a partition of unity with respect to the covering $\{U_j\}$. Set $\mu_j= c_j \sum_{i=1}^N (\theta_i \mu_{|U_i})*\tau_{\varepsilon_j}$, where $\tau_{\varepsilon_j}$ are standard convolution kernels, and $\{c_j\}$ is a sequence that tends to $1$, such that $\mu_j(X)=\int_X \beta^n=1$. By \cite[Theorem 1]{LWZ23}, there exist $v_j\in \mbox{PSH}(X,\beta+dd^c\rho)\cap L^\infty(X)$ such that $(\beta+dd^c\rho+dd^cv_j)^n=\mu_j$ and $\sup_X v_j=-1$. By the Hartogs lemma, we may assume that $v_j$ converges to some $v\in \mbox{PSH}(X,\beta+dd^c\rho)$ in $L^1(X,\omega^n)$.

		\begin{lem}\label{lem: pf of main 1-lem 1}
			There exists a uniform constant $C>1$ such that for all $j\in \mathbb N^*$,
			$$\int_X -v_j(\beta+dd^c\rho+dd^cv_j)^n \leq C \int_X -v_j d\mu.$$
			Moreover, if $\mu \in \mathcal{H}(A,\frac{1}{2}+\varepsilon,\beta)$ for some $\varepsilon>0$, we have $v \in \mathcal{E}^1(X,\beta+dd^c\rho)$.
				\end{lem}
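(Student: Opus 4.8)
The plan is to prove the two assertions in sequence: the uniform inequality by directly comparing the mollified data $\mu_j$ with $\mu$, and then $v\in\mathcal E^1(X,\beta+dd^c\rho)$ by feeding the resulting bound into Lemma \ref{lem: l1 est of v} and Proposition \ref{prop: criterion chi ene}.

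\emph{Step 1: the uniform inequality.} Since $(\beta+dd^c\rho+dd^cv_j)^n=\mu_j$, the left-hand side equals $\int_X-v_j\,d\mu_j$. First I would unwind $\mu_j=c_j\sum_{i=1}^N(\theta_i\mu_{|U_i})*\tau_{\varepsilon_j}$: as the kernels $\tau_{\varepsilon_j}$ are symmetric and $\theta_i$ is supported in $U_i$, for $\varepsilon_j$ small one has $\int_X(-v_j)\,d\big[(\theta_i\mu_{|U_i})*\tau_{\varepsilon_j}\big]=\int_{U_i}\big((-v_j)*\tau_{\varepsilon_j}\big)\,\theta_i\,d\mu$, the convolution being taken in the coordinates of a chart slightly larger than $U_i$. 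On such a chart write $\beta=dd^cg_i$ with $g_i$ smooth; then $\rho+v_j+g_i$ is plurisubharmonic there, hence dominated by its mollification, which gives
\[
(-v_j)*\tau_{\varepsilon_j}\ \le\ -v_j+\big(\rho*\tau_{\varepsilon_j}-\rho\big)+\big(g_i*\tau_{\varepsilon_j}-g_i\big)\ \le\ -v_j+C_1\qquad\text{on }U_i,
\]
with $C_1$ independent of $i,j$: indeed $\rho*\tau_{\varepsilon_j}-\rho\le 2\|\rho\|_{L^\infty(X)}$ because $\rho$ is bounded, and $|g_i*\tau_{\varepsilon_j}-g_i|$ is bounded uniformly since the $g_i$ are smooth and there are finitely many charts. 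Summing over $i$ and using $\sum_i\theta_i\mu_{|U_i}=\mu$ gives $\int_X-v_j\,d\mu_j\le c_j\big(\int_X-v_j\,d\mu+C_1\mu(X)\big)$. Finally $\sup_Xv_j=-1$ forces $-v_j\ge1$, hence $\int_X-v_j\,d\mu\ge\mu(X)=1$; since $(c_j)$ is bounded, this gives $\int_X-v_j\,d\mu_j\le C\int_X-v_j\,d\mu$ with the uniform constant $C:=(1+C_1)\sup_jc_j>1$.

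\emph{Step 2: $v\in\mathcal E^1$.} Each $v_j\in\mbox{PSH}(X,\beta+dd^c\rho)\cap L^\infty(X)$ has full Monge-Amp\`ere mass and is $\mu_j$-integrable, so $v_j\in\mathcal E^1(X,\beta+dd^c\rho)$ with $\sup_Xv_j=-1$; under the hypothesis $\mu\in\mathcal H(A,\tfrac12+\varepsilon,\beta)$, Lemma \ref{lem: l1 est of v} then gives $\int_X-v_j\,d\mu\le C'E_1(\rho+v_j)^{\frac12+\varepsilon}$. Since the weight $\chi(t)=-(-t)$ is the case $p=1$ and $v_j\le0$, we have $E_1(\rho+v_j)=\int_X-v_j(\beta+dd^c\rho+dd^cv_j)^n=\int_X-v_j\,d\mu_j$, so Step 1 yields $E_1(\rho+v_j)\le CC'E_1(\rho+v_j)^{\frac12+\varepsilon}$. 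Replacing $\varepsilon$ by a smaller positive number if necessary — legitimate because $\text{Cap}_\beta(E)\le\int_X\beta^n=1$ forces $\mathcal H(A,a,\beta)\subset\mathcal H(A,a',\beta)$ for $0<a'\le a$ — we may assume $\tfrac12-\varepsilon>0$, whence $E_1(\rho+v_j)^{\frac12-\varepsilon}\le CC'$, i.e. $\sup_jE_1(\rho+v_j)<+\infty$. As $\rho$ is fixed, $\rho+v_j\to\rho+v$ in $L^1(X,\omega^n)$, the functions $\rho+v_j$ lie in $\mathcal E_\chi(X,\beta)$ for the convex weight $\chi(t)=-(-t)$ with uniformly bounded $\chi$-energy $E_\chi(\rho+v_j)=E_1(\rho+v_j)$, so Proposition \ref{prop: criterion chi ene} gives $\rho+v\in\mathcal E_\chi(X,\beta)=\mathcal E^1(X,\beta)$, that is, $v\in\mathcal E^1(X,\beta+dd^c\rho)$.

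\emph{Main obstacle.} The only genuinely delicate point is the estimate $\int_X-v_j\,d\mu_j\le C\int_X-v_j\,d\mu$ in Step 1: one must perform the mollifications chart by chart, use the symmetry of $\tau_{\varepsilon_j}$ to transfer the smoothing from the measure onto $-v_j$, and control the two correction terms — one coming from the bounded potential $\rho$, the other from the smooth local potentials of $\beta$ — uniformly in $j$; the multiplicative (rather than merely additive) form of the bound is then automatic from $-v_j\ge1$ and $\mu(X)=1$. After Step 1, Step 2 is a routine assembly of Lemma \ref{lem: l1 est of v} and Proposition \ref{prop: criterion chi ene}.
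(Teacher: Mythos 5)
Your proof is correct and follows essentially the same route as the paper's: transfer the mollification onto $-v_j$ by symmetry of the kernel, use plurisubharmonicity of $v_j+\rho+\psi_i$ on each chart to absorb the smoothing into a controlled additive error, and then bootstrap $E_1(\rho+v_j)\le CC'E_1(\rho+v_j)^{\frac12+\varepsilon}$ via Lemma \ref{lem: l1 est of v} and Proposition \ref{prop: criterion chi ene}. Your explicit reduction to $\varepsilon<\tfrac12$ (via monotonicity of the classes $\mathcal H(A,a,\beta)$ in $a$) and the conversion of the additive error into a multiplicative constant using $-v_j\ge 1$ and $\mu(X)=1$ are details the paper leaves implicit, but the argument is the same.
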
 
		\begin{proof}

			Set $\mu_{j,i}=(\theta_i\mu_{|U_i})*\tau_{\varepsilon_j}$, then 
			$$\int_X -v_j(\beta+dd^c\rho+dd^cv_j)^n=\sum_{i=1}^N c_j\int_X (-v_j)d\mu_{j,i}.$$
			By definition, we have $$\int_X (-v_j)d\mu_{j,i}=\int_{U_i}-v_j*\tau_{\varepsilon_j}\theta_id\mu \leq \int_{U_i}-v_j*\tau_{\varepsilon_j}d\mu.$$
			Write $\beta=dd^c\psi_i$ on each $U_i$. Since $v_j+\rho+\psi_i$ is psh on $U_i$, 
			\begin{align*}
				-v_j*\tau_{\varepsilon_j}=-(v_j+\rho+\psi_i)*\tau_{\varepsilon_j}+(\rho+\psi_i)*\tau_{\varepsilon_j} \leq -(v_j+\rho+\psi_i)+(\rho+\psi_i)*\tau_{\varepsilon_j}.
			\end{align*}
			It follows that 
			$$\int_X (-v_j)d\mu_{j,i} \leq \int_{U_i} -v_jd\mu+o(1),$$
			where $o(1)=\int_{U_i} (\rho+\psi_i)*\tau_{\varepsilon_j}-(\rho+\psi_i)$, and thus we have 
			$$\int_X -v_j(\beta+dd^c\rho+dd^cv_j)^n \leq C \int_X -v_j d\mu.$$
			Furthermore, if $\mu \in \mathcal{H}(A,\frac{1}{2}+\varepsilon,\beta)$ for some $\varepsilon > 0$, by Lemma \ref{lem: l1 est of v}, we have
			$$\int_X-v_jd\mu \leq C'\left(\int_X-v_j(\beta+dd^c\rho+dd^cv_j)^n\right)^{\frac{1}{2}+\varepsilon}.$$
			This implies that $E_1(\rho+v_j) \leq CC' E_1(\rho+v_j)^{\frac{1}{2}+\varepsilon}$, i.e. $E_1(\rho+v_j)$ is uniformly bounded. By Proposition \ref{prop: criterion chi ene}, we have $v\in \mathcal{E}^1(X,\beta+dd^c\rho)$. The proof of Lemma \ref{lem: pf of main 1-lem 1} is complete.
			
		\end{proof}
		\begin{lem}\label{lem: dmu conv}
			For $\mu \in \mathcal{H}(A,\frac{1}{2}+\varepsilon,\beta)$, $\varepsilon > 0$, we have 
			$$\lim_{j \rightarrow \infty}\int_X|v_j-v|d\mu= 0$$
			and
			$$\lim_{j \rightarrow \infty}\int_X|v_j-v|d\mu_j= 0.$$
		\end{lem}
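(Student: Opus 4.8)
The plan is to deduce both limits from a single uniform integrability estimate, namely that the functions $v_j$ are uniformly integrable with respect to the measures $\mu$ and $\mu_j$, and then combine this with the $L^1(\omega^n)$-convergence $v_j \to v$ together with a standard Hartogs-type argument. First I would record the consequence of Lemma \ref{lem: pf of main 1-lem 1}: for $\mu \in \mathcal H(A,\tfrac12+\varepsilon,\beta)$ the energies $E_1(\rho+v_j)$ are uniformly bounded, say by $E_0$, and hence by Lemma \ref{lem: l1 est of v} (applied to $v_j$) and by the analogous estimate applied with $\mu$ replaced by $\mu_j$ (the constant in $\mathcal H(A,a,\beta)$ can be taken uniform in $j$ since $\mu_j$ is built from $\mu$ by convolution and a partition of unity, so $\mu_j \in \mathcal H(A',\tfrac12+\varepsilon,\beta)$ for a uniform $A'$), the quantities $\int_X -v_j\, d\mu$ and $\int_X -v_j\, d\mu_j$ are uniformly bounded. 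More is true: by Proposition \ref{prop: cap estima upper level set 2} and the Fubini argument in Lemma \ref{lem: l1 est of v}, for every $\delta>0$ one has
\[
\int_{\{v_j < -t\}} -v_j\, d\mu \le \mu(X) \cdot t\cdot Cap_\beta(\{v_j<-t\})^{\frac12+\varepsilon}\Big|_{\text{tail}} + \int_t^\infty A\,Cap_\beta(\{v_j<-s\})^{\frac12+\varepsilon}\,ds \le C\,E_0^{\frac12+\varepsilon}\, t^{-2\varepsilon},
\]
and the same bound holds with $\mu$ replaced by $\mu_j$. This gives uniform integrability of $\{v_j\}$ against the families $\{\mu\}$ and $\{\mu_j\}$: the mass on $\{v_j<-t\}$ is $\le C t^{-2\varepsilon}$ uniformly in $j$.

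Next I would upgrade the $L^1(\omega^n)$-convergence to convergence in $\mu$-measure. Since $v_j\to v$ in $L^1(X,\omega^n)$ and the $v_j$ are $\beta$-psh with $\sup_X v_j=-1$, the Hartogs lemma gives $\limsup_j v_j \le v$ pointwise, and more usefully $v=(\limsup_j v_j)^*$ quasi-everywhere after passing to a subsequence; in fact one has $\max(v_j,v)\to v$ and $\min(v_j,v)\to v$ in $L^1(\omega^n)$ and, by the quasi-continuity of bounded-from-above $\beta$-psh functions, the convergence $v_j\to v$ holds in capacity $Cap_\beta$: for each $\eta>0$, $Cap_\beta(\{|v_j-v|>\eta\})\to 0$ as $j\to\infty$. (This is the standard fact that $L^1$-convergence of quasi-psh functions implies convergence in capacity; it uses quasi-continuity and the Hartogs lemma, exactly as in the Kähler case.) Since $\mu\in\mathcal H(A,\tfrac12+\varepsilon,\beta)$, convergence in $Cap_\beta$ implies $\mu(\{|v_j-v|>\eta\})\le A\,Cap_\beta(\{|v_j-v|>\eta\})^{\frac12+\varepsilon}\to 0$, i.e. $v_j\to v$ in $\mu$-measure; the same holds in $\mu_j$-measure once we check $\mu_j(\{|v_j-v|>\eta\})\le A'\,Cap_\beta(\{|v_j-v|>\eta\})^{\frac12+\varepsilon}\to 0$, using $\mu_j\in\mathcal H(A',\tfrac12+\varepsilon,\beta)$ uniformly.

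Finally I would combine convergence in measure with uniform integrability via Vitali's convergence theorem. Write $\int_X|v_j-v|\,d\mu \le \int_{\{|v_j-v|\le\eta\}}\eta\, d\mu + \int_{\{|v_j-v|>\eta\}}|v_j-v|\,d\mu$; the first term is $\le\eta\,\mu(X)=\eta$, and the second is controlled, splitting further on $\{v_j<-t\}\cup\{v<-t\}$ and its complement, by the uniform tail bound $\int_{\{v_j<-t\}}|v_j|\,d\mu + \int_{\{v<-t\}}|v|\,d\mu \le C t^{-2\varepsilon}$ (the bound for $v$ follows since $v\in\mathcal E^1(X,\beta+dd^c\rho)$ by Lemma \ref{lem: pf of main 1-lem 1}, again via Proposition \ref{prop: cap estima upper level set 2}) plus $(t+|v|)\,\mu(\{|v_j-v|>\eta\})$ on the bounded part, which tends to $0$ by convergence in $\mu$-measure. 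Choosing first $t$ large, then letting $j\to\infty$, then $\eta\to 0$, yields $\int_X|v_j-v|\,d\mu\to 0$. The argument for $\int_X|v_j-v|\,d\mu_j\to 0$ is identical, using the uniform membership $\mu_j\in\mathcal H(A',\tfrac12+\varepsilon,\beta)$ and the uniform energy bound for $v_j$; one may additionally use that $\mu_j\to\mu$ weakly to handle the term $\int_X|v|\,d\mu_j$ if a direct tail estimate is preferred. The main obstacle is the passage from $L^1(\omega^n)$-convergence to convergence in capacity in the Hermitian setting and the verification that the mollified measures $\mu_j$ lie in $\mathcal H(A',\tfrac12+\varepsilon,\beta)$ with a constant $A'$ independent of $j$; both are where the quasi-continuity machinery and the local nature of the capacity estimates of \cite{LWZ23} enter, and everything else is a routine Vitali-type argument.
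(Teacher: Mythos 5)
There is a genuine gap at the pivotal step of your argument: the claim that $L^1(X,\omega^n)$-convergence of the quasi-psh sequence $v_j\to v$ implies convergence in the capacity $Cap_\beta$ (hence in $\mu$-measure). This is not a standard fact and is false in general: the classical counterexample takes $u_j$ to be the (regularized) relative extremal functions of unions of many shrinking balls arranged on finer and finer grids, so that $u_j\to 0$ in $L^1_{loc}$ while $Cap(\{u_j<-1/2\})$ stays bounded below (see e.g.\ the discussion of convergence in capacity in Xing's and Cegrell--Ko\l odziej's work). Hartogs' lemma only controls the sequence from above: the envelopes $V_j=(\sup_{k\ge j}v_k)^*$ decrease to $v$ and hence converge in capacity, but $v_j$ may dip far below $V_j$ on sets of small volume and non-small capacity, and controlling $\mu(\{V_j-v_j>\eta\})$ is essentially equivalent to the statement you are trying to prove, so the argument is circular at this point. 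This is exactly the difficulty the paper's proof is built to circumvent: it first establishes the uniform bound $\|v_j\|_{L^{1+\varepsilon}(\mu)}\le C$ (as you do), then extracts a weak limit $u$ in $L^{1+\varepsilon}(\mu)$, upgrades it to $d\mu$-a.e.\ convergence of convex combinations via Mazur's theorem, and identifies $\int_X u\,d\mu=\int_X v\,d\mu$ by comparing with the decreasing envelopes, using that $\mu$ puts no mass on negligible sets because $\mu\le A\,Cap_\beta^{1/2+\varepsilon}$. Once $\int_X v_j\,d\mu\to\int_X v\,d\mu$ is known, the pointwise inequality $|v_j-v|\le 2(V_j-v)+(v-v_j)$ finishes the first limit; your Vitali endgame is then unnecessary. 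Without a substitute for this step, your proof of $\int_X|v_j-v|\,d\mu\to0$ does not close.

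A secondary, more repairable issue is your assertion that $\mu_j\in\mathcal H(A',\tfrac12+\varepsilon,\beta)$ with $A'$ independent of $j$. Since $\mu_j$ is obtained from $\mu$ by local convolutions, this would require a uniform quasi-invariance of $Cap_\beta$ under small translations in coordinate charts (via comparison with local Bedford--Taylor capacities); this is plausible but you do not justify it, and it is not needed. The paper instead transfers every integral against $\mu_{j,i}=(\theta_i\mu|_{U_i})*\tau_{\varepsilon_j}$ back to an integral against the fixed measure $\mu$ by writing $\beta=dd^c\psi_i$ on $U_i$ and using the sub-mean-value inequality for the psh functions $v_j+\rho+\psi_i$ and for the envelopes $(\sup_{k\ge j}(v_k+\rho+\psi_i))^*$, which reduces the second limit to the first plus error terms of the form $\int_{U_i}\bigl((\rho+\psi_i)*\tau_{\varepsilon_j}-(\rho+\psi_i)\bigr)\,d\mu\to0$. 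You should adopt that reduction rather than claim uniform membership in $\mathcal H(A',\tfrac12+\varepsilon,\beta)$.
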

		\begin{proof}
			We will first prove that
			$$\lim_{j \rightarrow \infty}\int_X v_j d\mu = \int_X v d\mu.$$
			
			Since 
			\begin{align*}
				\int_X(-v_j)^{\delta}d\mu&=\mu(X)+\delta\int_{1}^{+\infty}t^{\delta-1}d\mu(\{v_j\textless-t\})dt\\
				&\leq 1+A\delta \int_1^{+\infty}t^{\delta-1}Cap_{\beta}(\{v_j\textless-t\})^{\frac{1}{2}+\varepsilon}dt\\
				&\leq 1+A\delta \int_1^{+\infty}t^{\delta-1}\frac{CE_1(\rho+v_j)}{t^{1+2\varepsilon}}dt.
			\end{align*}
			Hence, the sequence $\{v_j\}$ has uniform $L^{\delta}(X,d\mu)$-norm for some $\delta := 1+\varepsilon > 1$.
			Up to a subsequence, we may assume that the sequence $\{v_j\}$ converges weakly to $u \in L^{\delta}(X,d\mu)$. 
			By the Mazur theorem (see \cite[Corollary 3.8]{Bre11}), there exist $0\leq \varepsilon_{j,0},\varepsilon_{j,1},\cdots,\varepsilon_{j,t_j}\leq1$ such that 
			$$\varepsilon_{j,0}+\varepsilon_{j,1}+\cdots+\varepsilon_{j,t_j}=1 \text{ for } j\in \mathbb N^* \text{ and }\|(\varepsilon_{j,0}v_j+\varepsilon_{j,1}v_{j+1}+\cdots+\varepsilon_{j,t_j}v_{t_j})-u\|_{L^{\delta}(d\mu)} \rightarrow 0 \text{ as } j \rightarrow \infty.$$ 
			Thus, we may assume 
			$$\lim_{j\rightarrow \infty} (\varepsilon_{j,0}v_j+\varepsilon_{j,1}v_{j+1}+\cdots+\varepsilon_{j,t_j}v_{t_j}+\rho) =u+\rho, \ \  d\mu\text{-a.e.}$$
			Therefore, 
			$$\lim_{j\rightarrow \infty} \sup_{l\geq j}(\varepsilon_{l,0}v_l+\varepsilon_{l,1}v_{l+1}+\cdots+\varepsilon_{l,t_l}v_{t_l}+\rho)=u+\rho, \ \  d\mu\text{-a.e.}$$ 
			By \cite[Proposition 5.1]{BT82}, the (Bedford-Taylor) capacity the negligible set 
			$$\left\{\left(\sup_{l\geq j} (\varepsilon_{l,0}v_l+\varepsilon_{l,1}v_{l+1}+\cdots+\varepsilon_{l,t_l}v_{t_l}+\rho)\right)^*\textgreater\sup_{l\geq j} (\varepsilon_{l,0}v_l+\varepsilon_{l,1}v_{l+1}+\cdots+\varepsilon_{l,t_l}v_{t_l}+\rho)\right\}$$
			is 0. Therefore, we have
			$$\mu\left(\left\{\left(\sup_{l\geq j} (\varepsilon_{l,0}v_l+\varepsilon_{l,1}v_{l+1}+\cdots+\varepsilon_{l,t_l}v_{t_l}+\rho)\right)^*\textgreater\sup_{l\geq j} (\varepsilon_{l,0}v_l+\varepsilon_{l,1}v_{l+1}+\cdots+\varepsilon_{l,t_l}v_{t_l}+\rho)\right\}\right)=0.$$  
			It follows that
			$$\lim_{j\rightarrow \infty} \left(\sup_{l\geq j} (\varepsilon_{l,0}v_l+\varepsilon_{l,1}v_{l+1}+\cdots+\varepsilon_{l,t_l}v_{t_l}+\rho)\right)^*=u+\rho \ \  d\mu \text{-almost everywhere}.$$
			Considering that $v_j$ converges to $v$ in $L^1(X,\omega^n)$, we deduce that the sequence $$\left\{\left(\sup_{l\geq j}(\varepsilon_{l,0}v_l+\varepsilon_{l,1}v_{l+1}+\cdots+\varepsilon_{l,t_l}v_{t_l}+\rho)\right)^*\right\}$$ decreases to $v+\rho$, which belongs to $L^{\delta}(X,d \mu)$. By the Dominated Convergence Theorem, we have
			$$\lim_{j\rightarrow \infty}\int_X\left(\sup_{l\geq j} (\varepsilon_{l,0}v_l+\varepsilon_{l,1}v_{l+1}+\cdots+\varepsilon_{l,t_l}v_{t_l}+\rho)\right)^*d\mu=\int_X(u+\rho)d\mu.$$
			Additionally, according to the Monotone Convergence Theorem, we obtain
			$$\lim_{j\rightarrow \infty}\int_X\left(\sup_{l\geq j} (\varepsilon_{l,0}v_l+\varepsilon_{l,1}v_{l+1}+\cdots+\varepsilon_{l,t_l}v_{t_l}+\rho)\right)^*d\mu=\int_X(v+\rho)d\mu.$$
			Thus, we conclude that
			$$\int_Xvd\mu=\int_Xud\mu.$$
			Given that $v_j$ converges to $u$ in $L^{\delta}(X,d\mu)$, we have
			$$\lim_{j\rightarrow \infty}\int_Xv_jd\mu=\int_Xud\mu,$$
			which implies
			$$\lim_{j\rightarrow \infty}\int_Xv_jd\mu=\int_Xvd\mu.$$
			
			Next, we aim to prove that
			$$\lim_{j \rightarrow \infty}\int_X|v_j-v|d\mu =0.$$
			The proof follows a similar approach as in  \cite[Lemma 4.4]{GZ07}. Let $V_j:=(\sup_{k\geq j}v_k+\rho)^*$, and note that $V_j$ decreases to $v+\rho$ on $X$. Since 
			\begin{align*}
				|v_j-v|&=|(V_j-v-\rho)-(V_j-v_j-\rho)|\\
				&\leq (V_j-v-\rho)+(V_j-v_j-\rho) = 2(V_j-v-\rho)+(v-v_j),
			\end{align*}
			by the Monotone Convergence Theorem, we can write
			\begin{align*}
				\int_X  |v_j-v|d\mu\leq 2\int_X(V_j-v-\rho)d\mu+\int_X(v-v_j)d\mu\rightarrow 0.
			\end{align*}
			
			To complete the proof of Lemma \ref{lem: dmu conv}, it remains to show that 
			$$\int_{U_i}|v_j-v|d\mu_{j,i}\xrightarrow{j\rightarrow \infty} 0.$$
			
			By definition, we have $\int_{U_i}|v_j-v|d\mu_{j,i}=\int_{U_i}\theta_i|v_j-v|*\tau_{\varepsilon_j}d \mu\leq \int_{U_i}|v_j-v|*\tau_{\varepsilon_j}d \mu$. Let $V_j':=(\sup_{k\geq j}(v_k+\rho+\psi_i))^*$, where $V_j'$ is a plurisubharmonic (psh) function that decreases to $v+\rho+\psi_i$, and $V_j'\geq v_j+\rho+\psi_i$. Now, consider the following:
			\begin{align*}
				|v_j-v|&=|(V_j'-(v_j+\rho+\psi_i))-(V_j'-(v+\rho+\psi_i))|\\
				&\leq (V_j'-(v_j+\rho+\psi_i))+(V_j'-(v+\rho+\psi_i)).
			\end{align*}
			This implies that 
			\begin{align*}
				\int_{U_i}|v_j-v|*\tau_{\varepsilon_j}d\mu
				&\leq  \int_{U_i}(V_j'-(v_j+\rho+\psi_i))*\tau_{\varepsilon_j}d\mu+\int_{U_i}(V_j'-(v+\rho+\psi_i))*\tau_{\varepsilon_j}d\mu\\
				&=\int_{U_i}2V_j'*\tau_{\varepsilon_j}d\mu -\int_{U_i}(v+\rho+\psi_i)*\tau_{\varepsilon_j}d\mu-\int_{U_i}(v_j+\rho+\psi_i))*\tau_{\varepsilon_j}d\mu\\
				&\leq \int_{U_i}2(V_j'*\tau_{\varepsilon_j}-(v+\rho+\psi_i))d\mu+\int_{U_i}((v+\rho+\psi_i)-(v_j+\rho+\psi_i))d\mu. 
			\end{align*}
			The first term tends to 0 by the monotone convergence theorem, and the second term tends to 0 by $\int_X|v_j-v|d\mu \rightarrow0$ as $j \rightarrow \infty$.
							\end{proof}

	The main result of this section is as follows:
		\begin{thm}[=Theorem \ref{thm: main 1-1}]\label{thm: main 1}
		Let $p\geq1$ and $a>\frac{p}{p+1}$. Assume that $\mu\in \mathcal{H}(A,a,\beta)$ satisfies $\mu(X)=\int_X\beta^n=1$. Then there exists a unique $\varphi\in \mathcal{E}^p(X,\beta)$ such that
		$$\langle(\beta+dd^c\varphi)^n\rangle=\mu, \  \sup_X\varphi=0.$$
			\end{thm}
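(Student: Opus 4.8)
The plan is to prove existence and uniqueness separately, and to reduce both to the case $p=1$ plus the case $a>\frac{p}{p+1}$ via an energy bootstrap. For uniqueness, suppose $\varphi_1,\varphi_2\in\mathcal E^p(X,\beta)$ both solve the equation with $\sup_X\varphi_i=0$. Since $\mu$ is non-pluripolar, the comparison principle (Theorem \ref{thm: comparison prin}) applied on $\{\varphi_1<\varphi_2\}$ and $\{\varphi_2<\varphi_1\}$ gives $\mu(\{\varphi_1<\varphi_2\})=\mu(\{\varphi_1<\varphi_2\})$-type equalities forcing $\langle(\beta+dd^c\max(\varphi_1,\varphi_2))^n\rangle=\mu$ as well, and then the standard argument (as in \cite{GZ07,BEGZ10}) — testing against $\varphi_1-\varphi_2$ using the integration-by-parts/energy comparison and the fact that both have full mass — yields $\varphi_1=\varphi_2$. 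I would write this out using the $\mathcal E(X,\beta)$ comparison principle already available in the excerpt.

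For existence, I would first treat $a=\frac12+\varepsilon$ (equivalently $p=1$), which is exactly the range in which Lemmas \ref{lem: l1 est of v}, \ref{lem: pf of main 1-lem 1}, \ref{lem: dmu conv} were proved. Regularize $\mu$ to smooth measures $\mu_j$ with $\mu_j(X)=1$ as set up before Lemma \ref{lem: pf of main 1-lem 1}, solve $(\beta+dd^c\rho+dd^cv_j)^n=\mu_j$ with $\sup_X v_j=-1$ using \cite[Theorem 1]{LWZ23}, and pass to an $L^1(\omega^n)$-limit $v$ after Hartogs. Lemma \ref{lem: pf of main 1-lem 1} gives $v\in\mathcal E^1(X,\beta+dd^c\rho)$ with uniformly bounded energy $E_1(\rho+v_j)$; Lemma \ref{lem: dmu conv} gives $\int_X|v_j-v|\,d\mu\to0$ and $\int_X|v_j-v|\,d\mu_j\to0$. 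Then I would invoke Proposition \ref{prop: energy conv}: since $\langle(\beta+dd^c(\rho+v_j))^n\rangle=\mu_j$ and $\int_X|v_j-v|\,d\mu_j\to 0$ (which supplies the hypothesis $\int_X|\varphi_j-\varphi|\langle(\beta+dd^c\varphi_j)^n\rangle\to0$), we get $\mu_j=\langle(\beta+dd^c(\rho+v_j))^n\rangle\to\langle(\beta+dd^c(\rho+v))^n\rangle$ weakly; on the other hand $\mu_j\to\mu$ weakly by construction, so $\langle(\beta+dd^c(\rho+v))^n\rangle=\mu$. Setting $\varphi:=\rho+v$, suitably normalized so that $\sup_X\varphi=0$ (adjusting by the constant $\sup_X(\rho+v)$, which is finite), gives a solution in $\mathcal E^1(X,\beta)$.

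To upgrade from $p=1$ to general $p\geq1$ with $a>\frac{p}{p+1}$, I would replay the same regularization scheme but now track the $E_p$-energy instead of $E_1$. The analogue of Lemma \ref{lem: l1 est of v} — namely $\int_X(-v)^p\,d\mu\leq C\,E_p(\rho+v)^{a}$ — follows by the same Fubini computation: $\int_X(-v)^p d\mu = p\int_1^\infty t^{p-1}\mu(\{v<-t\})\,dt \leq p\int_1^\infty t^{p-1}A\,\mathrm{Cap}_\beta(\{v<-t\})^a\,dt$, and then Proposition \ref{prop: cap estima upper level set 2} with $\chi(t)=-(-t)^p$ gives $\mathrm{Cap}_\beta(\{v<-t\})\leq C E_p(\rho+v)/t^{p+1}$, so the integral is $\leq C E_p(\rho+v)^a\int_1^\infty t^{p-1-(p+1)a}\,dt$, which converges precisely because $a>\frac{p}{p+1}$. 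Combined with the analogue of Lemma \ref{lem: pf of main 1-lem 1} ($E_p(\rho+v_j)\leq C\int_X(-v_j)^p\,d\mu$, proved by the same convolution estimate — note $(-v_j)^p$ is still convex in $v_j$ so the argument goes through, or one uses $-\chi$ with $\chi$ convex and $\mathcal W^-_h$), one gets $E_p(\rho+v_j)\leq C' E_p(\rho+v_j)^{a}$ with $a<1$, hence $E_p(\rho+v_j)$ uniformly bounded, so $v\in\mathcal E^p(X,\beta+dd^c\rho)$ by Proposition \ref{prop: criterion chi ene}. The convergence $\int_X|v_j-v|^p\,d\mu_j\to0$ — the analogue of Lemma \ref{lem: dmu conv} — is obtained by the same Mazur/Banach-Saturn argument using uniform $L^{p+\varepsilon}(d\mu)$-bounds (available since $p-1-(p+1)a<-1-\varepsilon$ for $\varepsilon$ small), which then feeds Proposition \ref{prop: energy conv} exactly as before. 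I expect the main obstacle to be the last point: making the measure-convergence step $\int|v_j-v|^p\,d\mu_j\to0$ rigorous, because it requires the uniform higher-integrability of $v_j$ against the \emph{variable} regularized measures $\mu_j$ and not just against $\mu$, which is why the two-part structure of Lemma \ref{lem: dmu conv} (first $d\mu$, then pushing through the convolution to $d\mu_j$) is essential and must be carried out with the weight $p$ in place; everything else is a routine transcription of the $p=1$ arguments already in the excerpt.
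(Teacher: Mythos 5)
Your $\mathcal E^1$ existence step (regularize $\mu$ to $\mu_j$, solve for $v_j$ via \cite[Theorem 1]{LWZ23}, bound $E_1(\rho+v_j)$ by Lemma \ref{lem: pf of main 1-lem 1}, pass to the limit with Lemma \ref{lem: dmu conv} and Proposition \ref{prop: energy conv}) is exactly the paper's, and your uniqueness sketch defers to the same sources the paper cites. The divergence --- and the gap --- is in the upgrade to $p>1$. You propose to rerun the approximation scheme at level $p$, which requires the $p$-analogue of Lemma \ref{lem: pf of main 1-lem 1}, namely $E_p(\rho+v_j)=\int_X(-v_j)^p\,d\mu_j\leq C\int_X(-v_j)^p\,d\mu$. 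Your justification (``$(-v_j)^p$ is still convex in $v_j$ so the argument goes through'') is backwards: the convolution step in Lemma \ref{lem: pf of main 1-lem 1} rests on the \emph{linear} identity $(-v_j)*\tau_{\varepsilon_j}=-(v_j+\rho+\psi_i)*\tau_{\varepsilon_j}+(\rho+\psi_i)*\tau_{\varepsilon_j}$ together with the sub-mean-value inequality for the psh function $v_j+\rho+\psi_i$. For $p>1$ the weight $\chi_p(t)=-(-t)^p$ is \emph{concave} (it lies in $\mathcal{W}^+_M$, not $\mathcal{W}^-$), so $(-v_j)^p$ is a convex decreasing function of that psh function; Jensen then gives $\bigl((-v_j)^p\bigr)*\tau_{\varepsilon_j}\geq\bigl((-v_j)*\tau_{\varepsilon_j}\bigr)^p$, an inequality in the wrong direction, and $(-v_j)^p$ is typically subharmonic where $v_j$ is very negative (e.g.\ $(\log|z|)^2$ is subharmonic on $\mathbb{C}^*$), so no upper bound of the convolution by the pointwise value is available. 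Your parenthetical fallback (``use $\chi$ convex, in $\mathcal{W}^-_h$'') does not apply, since $-(-t)^p$ is convex only for $p\leq1$. On top of this you yourself flag, without resolving, the second missing step: the uniform higher integrability against the variable measures $\mu_j$ needed to get $\int_X|v_j-v|^p\,d\mu_j\to0$.

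The paper avoids both problems by never estimating $E_p$ of the approximants. It first produces the solution $v\in\mathcal{E}^1(X,\beta+dd^c\rho)$ of $\langle(\beta+dd^c\rho+dd^cv)^n\rangle=\mu$ exactly as you do, and then bootstraps the integrability of the \emph{limit} directly against $\mu$: setting $p_1=1$ and $p_{k+1}=\frac{p}{p+1}(1+p_k)\nearrow p$, Fubini and Proposition \ref{prop: cap estima upper level set 2} give
\begin{equation*}
\int_X(-v)^{p_{k+1}}\,d\mu\ \leq\ 1+C\,E_{p_k}(\rho+v)^a\int_1^{+\infty}t^{\,p_{k+1}-1-a(1+p_k)}\,dt\ <\ +\infty,
\end{equation*}
the exponent being $<-1$ precisely because $a>\frac{p}{p+1}$. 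This needs no convolution comparison and no uniform $E_p$ bound on the $v_j$. Note also that your one-shot inequality $E_p\leq C E_p^{\,a}$ cannot be applied to $v$ directly, because Proposition \ref{prop: cap estima upper level set 2} with weight $\chi_p$ presupposes $v\in\mathcal{E}_{\chi_p}$ --- the very thing to be proved --- whence the small increments. The bootstrap only yields $v\in\bigcap_{p'<p}\mathcal{E}^{p'}$, and the endpoint $\mathcal{E}^p$ is recovered by running the whole argument with $p+\varepsilon$ in place of $p$, legitimate since $a>\frac{p}{p+1}$ is an open condition (this is the role of Theorem \ref{thm: main 1 weak}). I recommend replacing your approximant-level $E_p$ estimates with this limit-level bootstrap.
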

		
		\begin{rem}
		The above theorem is a generalization of the corresponding result in \cite[Theorem 4.2, Proposition 5.3]{GZ07}, where $X$ is assumed to be K\"ahler and $\beta$ is assumed to be a K\"ahler metric on $X$. 
			\end{rem}
		
		Note that if $a > \frac{p}{p+1}$, then there exists an $\varepsilon > 0$ such that $a > \frac{p+\varepsilon}{p+\varepsilon+1}$. Thus, the proof of Theorem \ref{thm: main 1} can be reduced to the proof of the following theorem:
			\begin{thm}\label{thm: main 1 weak}
				For any $\mu \in \mathcal{H}(A,a,\beta)$, where $a > \frac{p}{p+1}$ and $p \geq 1$, such that $\mu(X) = \int_X\beta^n = 1$, there exists a unique $\varphi \in \cap_{\{p' < p\}}\mathcal{E}^{p'}(X,\beta)$ such that
				$$\langle(\beta+dd^c\varphi)^n\rangle = \mu, \quad \sup_X \varphi = 0.$$
		\end{thm}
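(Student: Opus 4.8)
The plan is to produce a solution by passing to the limit in the regularization $\mu_j\to\mu$ and the bounded solutions $v_j$ of $(\beta+dd^c\rho+dd^cv_j)^n=\mu_j$ ($\sup_Xv_j=-1$, $v_j\to v$ in $L^1$) already set up above, to identify the limiting Monge--Amp\`ere measure by Proposition \ref{prop: energy conv}, to upgrade the integrability of $v$ by a bootstrap based on Proposition \ref{prop: cap estima upper level set 2}, and to get uniqueness from the comparison principle (Theorem \ref{thm: comparison prin}). I would first reduce the hypothesis: since $\int_X\beta^n=1$ and $\mathrm{Cap}_\beta(E)\le\mathrm{Cap}_\beta(X)=\int_X\beta^n=1$ for every Borel set $E$ (integration by parts gives $\int_X(\beta+dd^cv)^n=\int_X\beta^n$ for bounded $\beta$-psh $v$), the inequality $\mu(E)\le A\,\mathrm{Cap}_\beta(E)^{a}$ implies $\mu(E)\le A\,\mathrm{Cap}_\beta(E)^{a'}$ whenever $a'\le a$. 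As $a>\tfrac{p}{p+1}\ge\tfrac12$, this shows $\mu\in\mathcal H(A,\tfrac12+\varepsilon_0,\beta)$ with $\varepsilon_0:=a-\tfrac12>0$, so Lemma \ref{lem: pf of main 1-lem 1} and Lemma \ref{lem: dmu conv} apply: the energies $E_1(\rho+v_j)$ are uniformly bounded, $v\in\mathcal E^1(X,\beta+dd^c\rho)$, and $\int_X|v_j-v|\,d\mu_j\to0$.

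Next I would identify the limit measure. Since $\rho+v_j\in\mathrm{PSH}(X,\beta)\cap L^\infty(X)\subset\mathcal E^1(X,\beta)$, $\rho+v\in\mathcal E^1(X,\beta)$, $\rho+v_j\to\rho+v$ in $L^1(X,\omega^n)$, and $\int_X|v_j-v|\langle(\beta+dd^c(\rho+v_j))^n\rangle=\int_X|v_j-v|\,d\mu_j\to0$, Proposition \ref{prop: energy conv} yields
\[
\langle(\beta+dd^c(\rho+v))^n\rangle=\lim_j\langle(\beta+dd^c(\rho+v_j))^n\rangle=\lim_j\mu_j=\mu .
\]
In particular $\rho+v$ has full Monge--Amp\`ere mass, so $\rho+v\in\mathcal E(X,\beta)$. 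Adding a constant to $v$ changes neither $\langle(\beta+dd^c(\rho+v))^n\rangle$ nor the finiteness of $\int_X(-v)^q\,d\mu$, so I normalize $v$ so that $\sup_X v=-1$ and keep $\mu=\langle(\beta+dd^c(\rho+v))^n\rangle$ below.

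The main new point is a bootstrap showing $v\in\mathcal E^{q}(X,\beta+dd^c\rho)$ for every $q<\tfrac{a}{1-a}$; since $\tfrac{a}{1-a}>p$ exactly when $a>\tfrac{p}{p+1}$, this gives $\rho+v\in\bigcap_{p'<p}\mathcal E^{p'}(X,\beta)$. For the induction step, assume $v\in\mathcal E^{q'}(X,\beta+dd^c\rho)$ for some $q'\ge1$ (the case $q'=1$ is the previous paragraph). The weight $\chi_{q'}(t)=-(-t)^{q'}$ lies in $\mathcal W^+_{q'}$, so Proposition \ref{prop: cap estima upper level set 2} provides a constant $C_{q'}>0$ with $\mathrm{Cap}_\beta(\{v<-t\})\le C_{q'}E_{q'}(\rho+v)\,t^{-(q'+1)}$ for $t>0$, where $E_{q'}(\rho+v)=\int_X(-v)^{q'}\,d\mu<\infty$. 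Using $\mu\in\mathcal H(A,a,\beta)$ and Fubini, for every $q<(q'+1)a$,
\[
\int_X(-v)^{q}\,d\mu=\mu(X)+q\int_1^\infty t^{q-1}\mu(\{v<-t\})\,dt\le 1+qAC_{q'}^{a}E_{q'}(\rho+v)^{a}\int_1^\infty t^{q-1-(q'+1)a}\,dt<\infty ,
\]
hence $v\in\mathcal E^{q}(X,\beta+dd^c\rho)$ (it already has full mass). Iterating from $q_0=1$ and choosing $q_{k+1}$ just below $(q_k+1)a$, the sequence $\{q_k\}$ is $\ge1$, increasing, and converges to the fixed point $\tfrac{a}{1-a}$ of $x\mapsto(x+1)a$ (only finitely many steps are needed to exceed any prescribed $p'<p$, so the varying constants $C_{q_k}$ cause no trouble), which proves the claim. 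Finally, replacing $\rho+v$ by $\varphi:=(\rho+v)-\sup_X(\rho+v)$ produces a solution with $\sup_X\varphi=0$, $\langle(\beta+dd^c\varphi)^n\rangle=\mu$, and $\varphi\in\bigcap_{p'<p}\mathcal E^{p'}(X,\beta)$.

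For uniqueness, if $\varphi_1,\varphi_2\in\mathcal E(X,\beta)$ satisfy $\langle(\beta+dd^c\varphi_i)^n\rangle=\mu$ and $\sup_X\varphi_i=0$, I would argue $\varphi_1=\varphi_2$ by the standard argument based on the comparison principle (Theorem \ref{thm: comparison prin}): one first checks that $\max(\varphi_1,\varphi_2)$ also has full mass and $\langle(\beta+dd^c\max(\varphi_1,\varphi_2))^n\rangle=\mu$, using locality of the non-pluripolar product on plurifine open sets together with the total-mass equality, and then the comparison principle forces $\varphi_1-\varphi_2$ to be constant, hence $0$; a fortiori the solution constructed above is the unique one in $\bigcap_{p'<p}\mathcal E^{p'}(X,\beta)$. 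I expect the main obstacle to lie not in the bootstrap itself (which is an elementary consequence of Proposition \ref{prop: cap estima upper level set 2} once the fixed-point calibration $\tfrac{a}{1-a}>p\Leftrightarrow a>\tfrac{p}{p+1}$ is observed) but in the limiting step: identifying $\langle(\beta+dd^c(\rho+v))^n\rangle$ with $\mu$ requires the $L^1(d\mu_j)$-convergence $\int_X|v_j-v|\,d\mu_j\to0$, which rests on the uniform $E_1$-bound and the uniform $L^{1+\varepsilon_0}(d\mu)$-integrability of the $v_j$ — precisely the content of Lemmas \ref{lem: pf of main 1-lem 1} and \ref{lem: dmu conv}.
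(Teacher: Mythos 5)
Your proposal is correct and follows essentially the same route as the paper: pass to the limit in the regularized problems via Lemmas \ref{lem: pf of main 1-lem 1} and \ref{lem: dmu conv} and Proposition \ref{prop: energy conv} to get a solution in $\mathcal E^1$, then bootstrap the integrability using the capacity estimate of Proposition \ref{prop: cap estima upper level set 2} together with $\mu\in\mathcal H(A,a,\beta)$ and Fubini. The only cosmetic difference is the choice of exponent sequence (the paper iterates $p_{k+1}=\tfrac{p}{p+1}(1+p_k)\nearrow p$, you take $q_{k+1}$ just below $(q_k+1)a\nearrow\tfrac{a}{1-a}>p$), and your explicit justification that $\mathcal H(A,a,\beta)\subset\mathcal H(A,a',\beta)$ for $a'\le a$ is a small but welcome addition the paper leaves implicit.
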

		\begin{proof}
			Since $p\geq1,$ we have $a>\frac{p}{p+1}\geq\frac{1}{2}$. By Proposition \ref{prop: energy conv} and Lemma \ref{lem: dmu conv}, there exists $v \in \mathcal{E}^1(X,\beta+dd^c\rho)$ such that
			$$\langle(\beta+dd^c\rho+dd^cv)^n\rangle=\mu, \  \sup_Xv=-1.$$
			
			In the following, we show that $v\in\mathcal{E}^{p'}(X,\beta+dd^c\rho)$ for any $p'< p$, i.e.
				$$\int_X(-v)^{p'}\langle(\beta+dd^c\rho+dd^cv)^n\rangle< +\infty.$$
			
			Set $p_1=1$, $p_{k+1}=\frac{p}{p+1}(1+p_k)$, $k\in \mathbb{N}^*$. It is clear that $\{p_k\}\nearrow p$. We prove by induction that $v\in \mathcal{E}^{p_k}(X,\beta+dd^c\rho)$, $k\in \mathbb{N}^*$.
			
			For $p_1=1$, it is obvious since $v \in \mathcal{E}^1(X,\beta+dd^c\rho)$.

Assume now $v \in \mathcal{E}^{p_k}(X,\beta+dd^c\rho)$. By the Fubini Theorem, we get that
$$\int_X(-v)^{p_{k+1}}\langle(\beta+dd^c\rho+dd^cv)^n\rangle\leq 1+p_{k+1}\int_1^{+\infty}t^{p_{k+1}-1}\mu(\{v<-t\})dt.$$
Since $\mu \in \mathcal{H}(A,a,\beta)$, by Proposition \ref{prop: cap estima upper level set 2}:
$$d\mu(\{v<-t\})\leq ACap_{\beta}(\{v<-t\})^{a}\leq A\left(\frac{CE_{p_k}(\rho+v)}{t^{1+p_k}}\right)^{a}.$$
It follows that 
\begin{align*}
	\int_X(-v)^{p_{k+1}}\langle(\beta+dd^c\rho+dd^cv)^n\rangle&\leq 1+p_{k+1}\int_1^{+\infty}t^{p_{k+1}-1}\mu(\{v<-t\})dt\\
	&\leq 1+p_{k+1}\int_1^{+\infty}t^{p_{k+1}-1}A\left(\frac{CE_{p_k}(\rho+v)}{t^{1+p_k}}\right)^{a}dt\\
	&=1+p_{k+1}A(CE_{p_k}(\rho+v))^a\int_1^{+\infty}\frac{dt}{t^{a(1+p_k)-p_{k+1}+1}}\\
	&<+\infty.
\end{align*}
The last inequality is due to the fact that $a(1+p_k)-p_{k+1}+1>\frac{p}{p+1}(1+p_k)-(p_{k+1}-1)=1$. This completes the existence part of the proof.
		
			The argument in  \cite[\S 3.3]{BEGZ10} works word by word here to prove the uniqueness part of Theorem \ref{thm: main 1 weak}.
			
		\end{proof}

		As a special case of Theorem \ref{thm: main 1}, say  $a>1$, we can get the $L^\infty$-estimate of the solution $\varphi$ by the method in \cite[\S 2]{EGZ09}, see also \cite[\S 3.4]{LWZ23}.
		\begin{cor}[=Corollary \ref{cor: 1}]\label{cor: a>1}
			
			Let  $a>1$, and let $\mu\in \mathcal{H}(A,a,\beta)$ such that   $\mu(X)=\int_X\beta^n=1$. Then, there exists a unique $\varphi\in \mbox{PSH}(X,\beta)\cap L^{\infty}(X)$ such that 
			$$(\beta+dd^c\varphi)^n=\mu \quad \text{and} \quad \sup_X\varphi=0.$$

		\end{cor}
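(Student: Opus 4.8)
The plan is to deduce Corollary~\ref{cor: a>1} from Theorem~\ref{thm: main 1} by first producing a solution in the energy class and then upgrading its regularity to $L^\infty$. Since $a>1>\tfrac{p}{p+1}$ for every $p\ge 1$, Theorem~\ref{thm: main 1} already yields a unique $\varphi\in\mathcal E^p(X,\beta)$ for all $p$ with $\langle(\beta+dd^c\varphi)^n\rangle=\mu$ and $\sup_X\varphi=0$; equivalently, writing $\varphi=\rho+v$, we obtain $v\in\mathcal E^p(X,\beta+dd^c\rho)$ solving $\langle(\beta+dd^c\rho+dd^cv)^n\rangle=\mu$. It remains to show this $v$ (hence $\varphi$) is bounded, and then that the non-pluripolar product is the honest Monge--Amp\`ere product so that $(\beta+dd^c\varphi)^n=\mu$ in the usual sense. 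Uniqueness in $\mathrm{PSH}(X,\beta)\cap L^\infty(X)$ is then immediate from the uniqueness already furnished by Theorem~\ref{thm: main 1} (any bounded solution lies in $\mathcal E^p$ for every $p$, since bounded $\beta$-psh functions have full mass and finite $\chi$-energy for all weights).

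For the $L^\infty$-estimate I would run the Ko\l odziej--type variational/capacity argument as in \cite[\S 2]{EGZ09}, adapted to the Hermitian setting exactly as in \cite[\S 3.4]{LWZ23}. Concretely: set $f(s):=\bigl(\mathrm{Cap}_\beta(\{v<-s\})\bigr)^{1/n}$ for $s>0$. Using the comparison-type inequality for the capacity sublevel sets together with the hypothesis $\mu\in\mathcal H(A,a,\beta)$ with $a>1$, one shows that $f$ satisfies a differential inequality of the form $t\,f(s+t)\le C\,f(s)^{a}$ for all $s>0$, $t\in(0,1]$, where $C$ depends only on $A$, $a$, $n$ and the background data. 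Since $a>1$, the De Giorgi iteration lemma (see \cite[Lemma 2.3]{EGZ09}) implies $f(s)\equiv 0$ for $s\ge s_\infty$ with $s_\infty$ an explicit finite constant, i.e. $\mathrm{Cap}_\beta(\{v<-s_\infty\})=0$. As $\mathrm{Cap}_\beta$ dominates the Lebesgue measure up to the exponential loss in Lemma~\ref{lem: volume cap esti}, the set $\{v<-s_\infty\}$ is Lebesgue-null, and since $v$ is $\beta$-psh (so quasi-psh, hence determined by its values a.e.), we get $v\ge -s_\infty$ everywhere, i.e. $v\in L^\infty(X)$ and thus $\varphi=\rho+v\in\mathrm{PSH}(X,\beta)\cap L^\infty(X)$.

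Once $\varphi$ is bounded, the non-pluripolar product $\langle(\beta+dd^c\varphi)^n\rangle$ coincides with the Bedford--Taylor product $(\beta+dd^c\varphi)^n$ (the truncations $\varphi^{(k)}$ stabilize to $\varphi$ for $k$ large), so the equation becomes $(\beta+dd^c\varphi)^n=\mu$ in the genuine sense, completing the proof. The main obstacle I anticipate is verifying the capacity differential inequality $t\,f(s+t)\le C f(s)^a$ in the Hermitian, cohomologically-twisted setting: one needs the comparison principle (Theorem~\ref{thm: comparison prin}) applied to the competitor $\max\{v,\,(1-t)^{-1}(v+t s)\}$-type functions together with the partition-of-unity/convolution bookkeeping that was set up for the approximating measures $\mu_j$, and one must ensure the constants do not degenerate. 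All the needed ingredients — the capacity estimates of Lemmas~\ref{lem: volume cap esti} and \ref{lem: cap est upper level set}, the comparison principle, and the solvability in $\mathcal E^p$ — are already in place, so this is mainly a careful transcription of the Ko\l odziej--Eyssidieux--Guedj--Zeriahi scheme; the remaining routine details are identical to \cite[\S 3.4]{LWZ23} and I would omit them.
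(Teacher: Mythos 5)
Your proposal is correct and follows essentially the same route as the paper: the authors likewise obtain the solution from Theorem~\ref{thm: main 1} and then invoke the Ko\l odziej-type capacity/De Giorgi iteration of \cite[\S 2]{EGZ09}, as adapted to the Hermitian setting in \cite[\S 3.4]{LWZ23}, to get the $L^\infty$-bound; your sketch of the differential inequality $t\,f(s+t)\le C f(s)^a$ and the passage from zero capacity to a genuine lower bound via Lemma~\ref{lem: volume cap esti} fills in exactly the details the paper leaves to those references.
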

		
		\begin{rem}
			The above corollary is a generalization of the result in \cite[Theorem 2.1]{EGZ09}, where  $X$ is assumed to be K\"ahler and $\beta$ is assumed to be semi-positive.
		\end{rem}

		\section{Characterization of the range of the  CMA operator on the class  $\mathcal{E}(X,\beta)$}\label{sect: char}
In this section, we provide a complete characterization of the range of the complex non-pluripolar Monge-Amp\`ere operator on the class $\mathcal E(X,\beta)$.

	\begin{thm}[=Theorem \ref{thm: main 1-2}]\label{thm: main 2}
		Let $(X,\omega)$ be a compact Hermitian manifold of complex dimension $n$. Let $\{\beta\}\in H^{1,1}(X,\mathbb R)$ be a real $(1,1)$-class with a smooth representative $\beta$. Assume that there is a continuous function $\rho$ such that $\beta+dd^c\rho\geq 0$ in the sense of currents. Then the following two conditions are equivalent:
			\begin{itemize}	
			\item [(1)] For every non-pluripolar Randon measure $\mu$ satisfying $\mu(X)=\int_X \beta^n=1$, there exists $\varphi \in \mathcal{E}(X,\beta)$ such that
			$$\langle(\beta+dd^c\varphi)^n\rangle=\mu.$$
			\item [(2)]The capacity $Cap_{\beta}(\cdot)$ can characterize pluripolar sets in the following sense: for any Borel set $E$, $Cap_{\beta}(E)=0$ if and only if $E$ is pluripolar.
			\end{itemize}
	\end{thm}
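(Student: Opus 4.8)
The plan is to prove the equivalence by establishing $(1)\Rightarrow(2)$ and $(2)\Rightarrow(1)$ separately, adapting the scheme of \cite[Theorem~A]{GZ07} to the Hermitian setting with the estimates of Sections~\ref{sect: pre}--\ref{sect: range}.

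\smallskip\noindent\emph{The implication $(1)\Rightarrow(2)$.} In $(2)$ the direction ``pluripolar $\Rightarrow$ zero capacity'' holds unconditionally: covering $X$ by coordinate balls on which a pluripolar set $E$ lies in the $-\infty$-locus of a local plurisubharmonic function, Bedford--Taylor theory shows $(\beta+dd^cv)^n$ charges no such locus for bounded $v\in\mathrm{PSH}(X,\beta)$, so $\mathrm{Cap}_\beta(E)=0$. For the converse I argue by contraposition. First I record the domination $\mathds 1_{\{\varphi>\rho-k\}}(\beta+dd^c\varphi^{(k)})^n\le C_0k^n\,\mathrm{Cap}_\beta(\cdot)$ for every $\varphi\in\mathrm{PSH}(X,\beta)$ with $\sup_X\varphi=0$: on $\{\varphi>\rho-k\}$ the bounded function $\varphi^{(k)}$ satisfies $\rho-k\le\varphi^{(k)}$, so an affine rescaling $w_k=\lambda_k(\varphi^{(k)}-\rho)+\rho+c_k$ with $\lambda_k\asymp k^{-1}$ lies in $\mathrm{PSH}(X,\beta)$ with $\rho\le w_k\le\rho+1$ and $\beta+dd^cw_k\ge\lambda_k(\beta+dd^c\varphi^{(k)})$; since the non-pluripolar product is the increasing setwise limit of these measures, letting $k\to\infty$ shows $\langle(\beta+dd^c\varphi)^n\rangle$ vanishes on every Borel set of zero $\mathrm{Cap}_\beta$, for all $\varphi\in\mathcal E(X,\beta)$. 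Now if $E$ is a non-pluripolar Borel set, then by Choquet capacitability and the classical local Bedford--Taylor characterization $E$ contains a compact non-pluripolar set $K$, and the normalized Monge--Amp\`ere measure of a local relative extremal function of $K$ is a non-pluripolar probability measure $\mu$ with $\mu(E)\ge\mu(K)>0$. By $(1)$ there is $\varphi\in\mathcal E(X,\beta)$ with $\langle(\beta+dd^c\varphi)^n\rangle=\mu$; if $\mathrm{Cap}_\beta(E)$ were $0$ the domination above would force $\mu(E)=0$, a contradiction, so $\mathrm{Cap}_\beta(E)>0$.

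\smallskip\noindent\emph{The implication $(2)\Rightarrow(1)$.} Let $\mu$ be non-pluripolar with $\mu(X)=\int_X\beta^n=1$. \emph{Step 1 (Cegrell decomposition).} Assuming $(2)$, I would prove the Hermitian analogue of Cegrell's lemma: there exist a bounded $\psi\in\mathrm{PSH}(X,\beta)$ (say $\rho\le\psi\le\rho+1$, so that $\nu:=(\beta+dd^c\psi)^n$ is a probability measure with $\nu\le\mathrm{Cap}_\beta(\cdot)$) and $0\le f\in L^1(\nu)$ with $\int_X f\,d\nu=1$ such that $\mu=f\nu$. This is the only place $(2)$ is used: in the standard construction of $\psi$ (solving auxiliary complex Monge--Amp\`ere equations and passing to a limit) the classical statement ``capacity characterizes pluripolar sets'' is replaced by $(2)$, and the continuity of $\rho$ enters in the approximation. \emph{Step 2 (truncation).} Put $f_j=\min(f,j)$, $c_j=1-\int_X f_j\,d\nu\in[0,1]$ with $c_j\downarrow0$, and $\mu_j=(f_j+c_j)\nu$; then $\mu_j$ is a probability measure with $\mu_j(E)\le(j+1)\nu(E)\le(j+1)\mathrm{Cap}_\beta(E)$, i.e. $\mu_j\in\mathcal H(j+1,1,\beta)$, and since $1>\tfrac{p}{p+1}$ for $p=1$ Theorem~\ref{thm: main 1-1} gives $\varphi_j\in\mathcal E^1(X,\beta)$ with $\langle(\beta+dd^c\varphi_j)^n\rangle=\mu_j$ and $\sup_X\varphi_j=0$.

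\smallskip\noindent\emph{Step 3 (uniform energy).} By Lemma~\ref{lem: cap est upper level set} (which requires no energy bound) one has $\nu(\{\varphi_j-\rho<-t\})\le\mathrm{Cap}_\beta(\{\varphi_j-\rho<-t\})\le C'/t$, uniformly in $j$; since $f+1\in L^1(\nu)$ its modulus of uniform integrability $F$ satisfies $F(0^+)=0$, hence $\mu_j(\{\varphi_j-\rho<-t\})=\int_{\{\varphi_j-\rho<-t\}}(f_j+c_j)\,d\nu\le F(C'/t)$, uniformly in $j$. By the de la Vall\'ee--Poussin criterion I would then choose a convex weight $\chi\in\mathcal W_h^-$ growing slowly enough at $-\infty$ that, writing $g(t)=-\chi(-t)$ (increasing and concave, hence sublinear), $\int_0^\infty g'(t)F(C'/t)\,dt<+\infty$; this is possible for \emph{every} $f\in L^1(\nu)$ because $g^{-1}$ may be taken to grow arbitrarily fast. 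A layer-cake computation then gives $E_\chi(\varphi_j)=\int_X g(\rho-\varphi_j)\,d\mu_j=\int_0^\infty g'(t)\,\mu_j(\{\varphi_j-\rho<-t\})\,dt$, bounded uniformly in $j$. \emph{Step 4 (limit).} By Hartogs' lemma a subsequence $\varphi_j\to\varphi$ in $L^1(X,\omega^n)$, and by Step 3 and Proposition~\ref{prop: criterion chi ene}, $\varphi\in\mathcal E_\chi(X,\beta)\subset\mathcal E(X,\beta)$. For fixed $m$ and all $k\ge m$, $\langle(\beta+dd^c\varphi_k)^n\rangle=\mu_k\ge f_m\nu$; applying Proposition~\ref{prop: preserve sup} to $\Phi_m:=(\sup_{k\ge m}\varphi_k)^*$ (which lies in $\mathcal E_\chi(X,\beta)$ since $\Phi_m\ge\varphi_m$ and $\mathcal E_\chi$ is monotone under $\ge$) and then the monotone convergence Theorem~\ref{thm: mono converge} as $m\to\infty$ gives $\langle(\beta+dd^c\varphi)^n\rangle\ge f_m\nu$ for every $m$, hence $\ge f\nu=\mu$. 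Since $\varphi\in\mathcal E(X,\beta)$, both measures have total mass $\int_X\beta^n=1=\mu(X)$, so $\langle(\beta+dd^c\varphi)^n\rangle=\mu$.

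\smallskip\noindent\emph{Main obstacle.} The two delicate points are Step 1 --- setting up the Cegrell-type decomposition in the compact Hermitian category and pinpointing exactly where hypothesis $(2)$ and the continuity of $\rho$ are needed --- and Step 3 --- choosing the convex weight $\chi$ adapted to $f$ so that the $\chi$-energies of the truncated solutions stay bounded uniformly in the truncation level; the crucial enabler there is the unconditional capacity estimate of Lemma~\ref{lem: cap est upper level set}. Everything else is a routine adaptation of \cite{GZ07, BEGZ10}.
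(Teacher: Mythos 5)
Your overall architecture coincides with the paper's: for $(1)\Rightarrow(2)$ you build a non-pluripolar measure charging a putative non-pluripolar set of zero capacity and use the affine rescaling $\lambda_k(\varphi^{(k)}-\rho)+\rho+c_k$ to contradict $\mathrm{Cap}_\beta=0$ (the paper uses $v^{(k)}/k$ to the same effect), and for $(2)\Rightarrow(1)$ you decompose $\mu=f\nu$ with $\nu$ dominated by $\mathrm{Cap}_\beta$, truncate, solve with Theorem~\ref{thm: main 1-1}, establish a uniform $\chi$-energy bound for a weight adapted to $f$, and pass to the limit via Proposition~\ref{prop: preserve sup} and Theorem~\ref{thm: mono converge}. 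Steps 2 and 4 are correct as written.

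The genuine gap is your Step 1, and it is not a peripheral one: it is exactly Lemma~\ref{lem: npp measure} of the paper, the only place where hypothesis $(2)$ and the continuity of $\rho$ do any work, and your one-line sketch (``solving auxiliary complex Monge--Amp\`ere equations and passing to a limit'') does not describe the mechanism that is actually needed. The paper's argument is: continuity of $\rho$ gives weak compactness of $\mathcal{H}(1,1,\beta)=\{\nu:\nu\le \mathrm{Cap}_\beta\}$ by \cite[Proposition 1.6]{BBGZ13}; Rainwater's generalized Radon--Nikodym theorem \cite{Ra69} then writes $\mu=g\nu+\sigma$ with $\nu\in\mathcal{H}(1,1,\beta)$ and $\sigma$ singular with respect to every element of $\mathcal{H}(1,1,\beta)$; hypothesis $(2)$ forces $\sigma$ to be carried by a pluripolar set, hence $\sigma=0$ since $\mu$ is non-pluripolar; finally $\nu$ is realized as $\langle(\beta+dd^c\rho+dd^cv)^n\rangle$ with $v\in\mathcal{E}^1$ by Theorem~\ref{thm: main 1 weak}, and one passes to a \emph{bounded} potential via $u=e^v$ and $e^{nv}\nu\le(\beta+dd^c\rho+dd^cu)^n$, which yields $\mu=f\,(\beta+dd^c\rho+dd^cu)^n$ with $0\le u\le1$ (equivalently $\psi=\rho+u$ with $\rho\le\psi\le\rho+1$, as you want). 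So the conclusion of your Step 1 is exactly right, but without the compactness/Rainwater/exponentiation argument the implication $(2)\Rightarrow(1)$ is unproved.

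Your Step 3 is a genuinely different route from the paper's and, as far as I can check, a correct one. The paper constructs the weight via an Orlicz function $\gamma$ with $\gamma\circ f\in L^1(\nu)$, its Legendre transform, and the Young-type inequality $-\tilde\chi(t)f(x)\le -t+\gamma\circ f(x)$, then bounds $\int_X-v_j\,d\nu$ by the Chern--Levine--Nirenberg inequality. You instead combine the unconditional decay $\mathrm{Cap}_\beta(\{\varphi_j-\rho<-t\})\le C'/t$ of Lemma~\ref{lem: cap est upper level set} with the absolute continuity of $(f+1)\,d\nu$ with respect to $\nu$ to get a uniform tail bound $\mu_j(\{\varphi_j-\rho<-t\})\le F(C'/t)$, and then select $\chi$ so that the layer-cake integral converges; the required selection lemma (given $H\downarrow 0$ there is a decreasing $g'>0$ with $\int g'=\infty$ and $\int g'H<\infty$) is elementary. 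This avoids both the Legendre transform and Chern--Levine--Nirenberg, at the cost of two small points you should make explicit: the piecewise-defined $g$ must be smoothed and glued to $t\mapsto t$ near $0$ so that $\chi\in\mathcal{W}^-$ (the paper does this with a regularized maximum), and Lemma~\ref{lem: cap est upper level set} is applied to $v=\varphi_j-\rho$ whose supremum is only pinned between two constants rather than equal to $0$, which is covered by the remark following that lemma.
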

	
\begin{proof}[Proof of (1)$\implies$(2)]
	Suppose to the contrary, we assume that $E$ satisfies $Cap_{\beta}(E)=0$ but $E$ is not pluripolar. By \cite[Theorem 6.9, Theorem 8.2]{BT82}, there are concentric coordinate balls $U\Subset U'$ and a function $u\in\mbox {PSH}(U')\cap L^\infty(U')$ such that $\int_{E\cap U}(dd^cu)^n>0$.
	
	Let $\chi$ be a cut-off function such that $\chi=1$ on $U$ and $supp(\chi)\subset U'$. Let $c>0$ be a positive number such that $\int_X c\chi(dd^cu)^n=\int_X\beta^n$. Set $\mu=c\chi(dd^cu)^n$, then $\mu$ is a non-pluripolar measure. By (1), there exists $v\in \mathcal{E}(X,\beta+dd^c\rho)$ such that $\langle(\beta+dd^c\rho+dd^cv)^n\rangle=\mu$ and $\sup_Xv=0$. Thus, there is a large integer $k$ such that $$\int_{\{v>-k\}\cap E}(\beta+dd^c\rho+dd^cv^{(k)})^n>0,$$ where $v^{(k)}:=\max\{v,-k\}$.
	
	Since $$(\beta+dd^c\rho+dd^cv^{(k)})^n\leq k^n\left(\beta+dd^c\rho+dd^c\frac{v^{(k)}}{k}\right)^n,$$ we have $$\left(\beta+dd^c\rho+dd^c\frac{v^{(k)}}{k}\right)^n(E)>0.$$ This contradicts $Cap_{\beta}(E)=0$.
	\end{proof}
		\begin{rem}
		We would like to emphasize that, in order for the above proof to work, it is sufficient for $\rho$ to be bounded.
		\end{rem}
To proceed, we require the following lemma:
		\begin{lem}\label{lem: npp measure}
	Let $\mu$ be a non-pluripolar positive Radon measure on $X$, and let $\rho\in \mbox{PSH}(X,\beta)\cap \mathcal C(X)$. Assume that $Cap_{\beta}(\cdot)$ can characterize pluripolar sets. Then there exists $u\in \mbox{PSH}(X,\beta+dd^c\rho)\cap L^{\infty}(X)$ and $0\leq f\in L^1((\beta+dd^c\rho+dd^c u)^n)$ such that $\mu=f(\beta+dd^c\rho+dd^cu)^n.$
	\end{lem}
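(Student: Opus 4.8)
The plan is to use a Radon–Nikodym-type argument combined with a dominating capacity estimate. First I would reduce to producing a single bounded potential $u$ together with a finite total mass bound: it suffices to find $u \in \mathrm{PSH}(X,\beta+dd^c\rho)\cap L^\infty(X)$ such that $\mu$ is absolutely continuous with respect to $(\beta+dd^c\rho+dd^c u)^n$ with an $L^1$ density, since $f := d\mu/d(\beta+dd^c\rho+dd^c u)^n$ then has $\int_X f\,(\beta+dd^c\rho+dd^c u)^n = \mu(X) < +\infty$. The natural strategy, following the Kähler case, is to write $\mu$ as a decreasing limit (or suitable regularization) of measures in one of the Hölder classes $\mathcal H(A,a,\beta)$ and solve there using Corollary \ref{cor: a>1}, but since $\mu$ itself need not satisfy any such estimate, the key device is instead to split $\mu$ via a weight. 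Concretely, I would first handle the case where $\mu$ has bounded density with respect to some fixed measure of the form $(\beta+dd^c\rho+dd^c w)^n$ — this is essentially \cite[Theorem 1]{LWZ23} — and then bootstrap to the general non-pluripolar $\mu$.

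The heart of the argument is the following. Since $Cap_\beta$ characterizes pluripolar sets and $\mu$ puts no mass on pluripolar sets, I claim $\mu$ is ``absolutely continuous with respect to capacity'' in a weak, quantitative-after-truncation sense: for every $\varepsilon > 0$ there is $\delta > 0$ such that $Cap_\beta(E) < \delta \implies \mu(E) < \varepsilon$. This is a standard consequence of $\mu$ being a Radon measure vanishing on the intersection of all sets of zero capacity (which, by hypothesis (2), is exactly the ideal of pluripolar sets): if it failed, one would extract Borel sets $E_j$ with $Cap_\beta(E_j) < 2^{-j}$ but $\mu(E_j) \geq \varepsilon$, and then $E := \bigcap_k \bigcup_{j\geq k} E_j$ would satisfy $Cap_\beta(E) = 0$ hence be pluripolar, yet $\mu(E) \geq \varepsilon$ by upper continuity along the decreasing sets $\bigcup_{j\geq k}E_j$ — contradiction. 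Having this, one shows that $\mu$ can be dominated: there exists a bounded $u$ with $\mu \leq C(\beta+dd^c\rho+dd^c u)^n$ on a co-small set, and more usefully one iterates to write $\mu = f\,(\beta+dd^c\rho+dd^c u)^n$. I would do this by the Radon–Nikodym theorem once I exhibit one bounded potential $u$ whose Monge–Ampère measure dominates $\mu$ up to pluripolar sets; the domination is obtained by decomposing $\mu = \sum_j \mu_j$ with $\mu_j$ supported where a suitable quasi-psh comparison function lives and each $\mu_j$ of the form treated by \cite[Theorem 1]{LWZ23}, then taking an infinite convex combination $u = \sum 2^{-j} u_j$ (convergence of the potentials in $L^\infty$ being guaranteed by the normalization $\sup_X u_j$ and a uniform $L^\infty$ bound from \cite[Theorem 1]{LWZ23}), using that Monge–Ampère measures behave super-additively under such sums on plurifine sets.

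The main obstacle I anticipate is making the decomposition-and-domination step precise in the \emph{Hermitian}, possibly \emph{non-big} setting: the classical Kähler proof (Guedj–Zeriahi, \cite[Theorem A]{GZ07}) leans on solving $(\beta+dd^c\cdot)^n = g\,\omega^n$ for bounded $g$ with control on $\|u\|_\infty$, and here the analogous input is \cite[Theorem 1]{LWZ23}, which requires the density to lie in $L^p(\omega^n)$, $p>1$ — not merely $L^\infty$ of some Monge–Ampère measure. So I expect the real work is in choosing the auxiliary comparison function $w$ and the partition of $\mu$ so that each piece $\mu_j$ is genuinely $\leq$ (bounded)$\cdot \omega^n$ on its support, which is where the hypothesis that $\rho$ is \emph{continuous} (not just bounded) gets used: continuity of $\rho$ lets one compare $(\beta+dd^c\rho+dd^c\cdot)^n$ with $\omega^n$ uniformly on compact plurifine pieces and pass local regularizations through. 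Once $u$ and the domination $\mu \leq C(\beta+dd^c\rho+dd^c u)^n$ modulo a pluripolar (hence $\mu$-null) set are in hand, Radon–Nikodym produces $f \in L^1$ and the proof concludes. A secondary technical point to check is that the exceptional pluripolar set in the domination really is $\mu$-negligible — but that is immediate from hypothesis (2) together with the fact, recorded after Lemma \ref{lem: npp basic prop}, that non-pluripolar products charge no pluripolar set.
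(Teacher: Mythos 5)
There is a genuine gap in the domination step, which is exactly the point you flag as "the real work" but then do not resolve. Your plan is to exhibit a single bounded $u$ with $\mu\leq C(\beta+dd^c\rho+dd^cu)^n$ modulo a pluripolar set. This cannot work for a general non-pluripolar $\mu$: if $0\leq u\leq M$ then $\beta+dd^c u\leq M(\beta+dd^c(u/M))$ for $M\geq1$, so such a domination forces $\mu(E)\leq CM^n\,Cap_\beta(E)$ for every Borel $E$, i.e.\ $\mu\in\mathcal H(CM^n,1,\beta)$ — but an arbitrary measure vanishing on pluripolar sets need not satisfy any inequality $\mu\leq A\,Cap_\beta$ (your own $\varepsilon$--$\delta$ uniform absolute continuity, which is correct, is strictly weaker and does not upgrade to a linear bound). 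Likewise, the proposed decomposition $\mu=\sum_j\mu_j$ with each $\mu_j$ treated by \cite[Theorem 1]{LWZ23} requires each piece to have an $L^p(\omega^n)$ density, which fails whenever $\mu$ is singular with respect to $\omega^n$ (e.g.\ a Monge--Amp\`ere measure of a bounded potential charging a Lebesgue-null non-pluripolar set). So the conclusion you are entitled to is only absolute continuity with an $L^1$, generally unbounded, density — and your argument never produces that.

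The paper closes this gap with two ideas absent from your proposal. First, instead of dominating $\mu$ itself, it applies Rainwater's generalized Radon--Nikodym theorem to the compact convex set $\mathcal H(1,1,\beta)=\{\nu:\nu\leq Cap_\beta\}$ (compactness is where the \emph{continuity} of $\rho$ is actually used, via \cite[Proposition 1.6]{BBGZ13} — not for comparing $(\beta+dd^c\rho+dd^c\cdot)^n$ with $\omega^n$ as you suggest), writing $\mu=g\nu+\sigma$ with $\nu\leq Cap_\beta$, $g\in L^1(\nu)$, and $\sigma$ singular with respect to every element of $\mathcal H(1,1,\beta)$; the hypothesis that $Cap_\beta$ characterizes pluripolar sets then forces $\sigma=0$. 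Second, since $\nu\leq Cap_\beta$, Theorem \ref{thm: main 1 weak} solves $\langle(\beta+dd^c\rho+dd^cv)^n\rangle=\nu$ with $v\in\mathcal E^1$ possibly unbounded, and the bounded potential is obtained by the substitution $u=e^v$, which gives $(\beta+dd^c\rho+dd^ce^v)^n\geq e^{nv}\nu$ and hence $\mu=g\nu\ll(\beta+dd^c\rho+dd^cu)^n$ off the pluripolar set $\{v=-\infty\}$. Without the Rainwater decomposition (or an equivalent Cegrell-type argument) and the $e^v$ trick, your outline does not reach the stated conclusion.
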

		\begin{proof}
	The proof is almost the same as \cite[Lemma 4.5]{GZ07}, with the exception of a new ingredient: Theorem \ref{thm: main 1 weak}. Recalling the definition:
				$$\mathcal{H}(1,1,\beta):=\{\nu\in \mathcal{M}_X: \nu \leq Cap_{\beta}\}.$$
				Since $\rho$ is continuous, according to \cite[Proposition 1.6]{BBGZ13}, we know that $\mathcal{H}(1,1,\beta)$ is compact in the weak topology. From a generalization of the Radon-Nikodym theorem (see \cite{Ra69}), we have:
			
		\begin{center}
				$\mu=g\nu+\sigma$, where $\nu \in \mathcal{H}(1,1,\beta)$, $0\leq g \in L^1(\nu)$, and $\sigma \perp \mathcal{H}(1,1,\beta)$.
			\end{center}
				It is clear that all measures of the form $(\beta+dd^c\rho+dd^cv)^n$, where $v \in \mbox{PSH}(X,\beta+dd^c\rho)$ and $0\leq v \leq 1$, are contained in $\mathcal{H}(1,1,\beta)$. Additionally, $Cap_{\beta}(\cdot)$ can characterize pluripolar sets. Therefore, we see that $\sigma$ is carried by a pluripolar set. However, since $\sigma=\mu-g\nu$ is non-pluripolar, this implies that $\sigma=0$.
				Using Theorem \ref{thm: main 1 weak}, there exists $v \in \mathcal{E}^1(X,\beta+dd^c\rho) $ with $\sup_Xv=0$, such that $\langle(\beta+dd^c\rho+dd^cv)^n\rangle=\nu$. We set $u:=e^v$, then $u\in \mbox{PSH}(\beta+dd^c\rho)\cap L^\infty(X)$ and we have:
				$$\beta+dd^c\rho+dd^ce^v=e^v(\beta+dd^c\rho+dd^cv)+(1-e^v)(\beta+dd^c\rho)+e^vdv\wedge d^cv$$ $$\geq e^v(\beta+dd^c\rho+dd^cv)\geq0.$$
				Therefore, we have:
				$$ e^{nv}\nu= e^{nv}\langle(\beta+dd^c\rho+dd^cv)^n\rangle\leq (\beta+dd^c\rho+dd^cu)^n,$$
				which implies that $\mu$ is absolutely continuous with respect to $(\beta+dd^c\rho+dd^cu)^n$. This completes our proof.
				\end{proof}
		
		\begin{proof}[Proof of (2)$\implies$(1)]
			We follow the strategy outlined in \cite[Theorem 4.6]{GZ07}.  Let's assume that $\mu$ is a non-pluripolar Radon measure such that $\mu(X)=\int_X\beta^n$. By using Lemma \ref{lem: npp measure}, we can write $\mu=f\nu$, where $f$ is a non-negative function in $L^1(\nu)$, and $\nu=(\beta+dd^c\rho+dd^cu)^n$ for some $0\leq u \leq1$, where $u\in \mbox{PSH}(X,\beta+dd^c\rho)$.  We define $\mu_j:=c_j\min\{f,j\}\nu$, with $c_j\searrow 1$ such that $\mu_j(X)=\int_X\beta^n$. It can be easily seen that $\mu_j\leq 2\min\{f,j\}\nu\leq 2f\nu$ and $\mu_j\leq 2j\nu$ for sufficiently large $j$, hence $\mu_j\in \mathcal H(2j,1,\beta)$. By applying Theorem \ref{thm: main 1}, we can find a sequence of $v_j \in \mathcal{E}^1(X,\beta+dd^c\rho)$, with $\sup_X v_j=0$, such that $\langle(\beta+dd^c\rho+dd^cv_j)^n\rangle=\mu_j$. Up to a subsequence, we can assume that $v_j\rightarrow v$ in $L^1(X,\omega^n)$ for some $v\in \mbox{PSH}(X,\beta+dd^c\rho)$ with $\sup_Xv=0$.
			
			We will now show that $v\in \mathcal{E}_{\chi}(X,\beta+dd^c\rho)$ for some $\chi\in \mathcal{W}^-$ and that $(\beta+dd^c\rho+dd^cv)^n=\mu$.
			
			 To construct the function $\chi$, let $\gamma:\mathbb{R}^{+} \rightarrow \mathbb{R}^{+}$ be a smooth, convex, and increasing function such that $\gamma(0)=\gamma'(0)=0$ and $\lim_{x\rightarrow +\infty}\gamma(x)/x=+\infty$, while still having $\gamma \circ f$ in $L^1(\nu)$ (cf. \cite{RR91} for the construction of $\gamma$). We define $\gamma^*$ as the Legendre transform of $\gamma$, where $\gamma^*(x)=\sup\{xy-\gamma(y)|y\in [0,\infty) \}\in [0,\infty)$. We then set $\tilde\chi(t):=-(\gamma^*)^{-1}(-t)$, which gives us a convex increasing function defined on $(-\infty,0]$ such that $\tilde\chi(-\infty)=-\infty$, $\tilde\chi(0)=0$, and $$-\tilde\chi(t)\cdot f(x)\leq -t+\gamma \circ f(x), \ \forall (t,x)\in (-\infty,0] \times X\label{equ:1}$$ since $xy-\gamma(y)\leq \gamma^*(x)$ for all $x,y\in [0,\infty)$. We will now modify $\tilde\chi$ to be a weight function defined on $\mathbb{R}$ as follows: let $g: x\in [0,\infty) \rightarrow g(x)\in [0,\infty)$ such that $\gamma'(g(x))=x$. This is a smooth increasing function with the properties $\gamma^*(x)=xg(x)-\gamma(g(x))$, $\lim_{x\rightarrow0}g(x)=0$, and $\lim_{x\rightarrow \infty}g(x)=\infty$. Furthermore, $(\gamma^*)'(x)=g(x)$ and $\tilde\chi'(t)=\frac{1}{(\gamma^*)'[(\gamma^*)^{-1}(-t)]}=\frac{1}{g[(\gamma^*)^{-1}(-t)]}$.
			
			Since $\lim_{x\rightarrow 0} (\gamma^*)(x)=0$ and $\lim_{x\rightarrow \infty}(\gamma^*)(x)=\infty$, we have $\lim_{t\rightarrow 0^-} (\gamma^*)^{-1}(-t)=0$ and $\lim_{t\rightarrow -\infty}(\gamma^*)^{-1}(-t)=\infty$. Therefore, $\lim_{t\rightarrow -\infty}\chi'(t)=0$ and $\lim_{t \rightarrow 0^-}\chi'(t)=\infty$.
			
			Let $0<\varepsilon<\eta$ be two sufficiently small constants. Define $$
			\chi(t)=
			\begin{cases}
				\max_{\varepsilon}\{\tilde\chi(t)-\eta,t\} &\text{if } t<0, \\
				t &\text{if } t\geq0.
			\end{cases}
			$$
			
			Note that $\chi$ is a smooth convex function on $\mathbb{R}$ with $\chi(t)=t$ on $[0,\infty)$, i.e. $\chi\in \mathcal W^-$.
			
			Then we have  \begin{align*}
				&\int_X(-\chi\circ v_j)\langle(\beta+dd^c\rho+dd^cv_j)^n\rangle\\
				&\leq 2\int_X(-\chi\circ v_j) f\nu\\
				&\leq 2\int_X-v_j(\beta+dd^c\rho+dd^cu)^n+2\int_X(\gamma \circ f+\eta f)(\beta+dd^c\rho+dd^cu)^n.
			\end{align*} 
			
			By the Chern-Levine-Nirenberg inequality (see e.g. \cite[Proposition 2.1]{LWZ23}), $\int_X-v_j(\beta+dd^c\rho+dd^cu)^n$ is bounded by $||v_j||_{L^1(X,\omega)}$, so it is uniformly bounded (cf. \cite[Lemma 2.3]{DP10}).  Thus $E_{\chi}(v_j+\rho)$ is uniformly bounded.
			Let $\Phi_j:=(\sup_{k\geq j}v_k+\rho)^*$. By Proposition \ref {prop: preserve sup}, $$\langle(\beta+dd^c\Phi_j)^n\rangle\geq \inf_{k\geq j}\langle(\beta+dd^c\rho+dd^cv_k)^n\rangle\geq \min\{f,j\}\nu.$$
			
			Since $\Phi_j\searrow \rho+v$ and $\Phi_j\geq v_j+\rho$, by Proposition \ref{prop: fund inequ} and Remark \ref{rem: chi ene ineq}, we have $$\int_X(-\chi\circ (\Phi_j-\rho))\langle(\beta+dd^c\Phi_j)^n\rangle\leq 2^n\int_X(-\chi\circ v_j)\langle(\beta+dd^c\rho+dd^cv_j)^n\rangle.$$
			
			Thus, $\rho+v$ belongs to $\mathcal{E}_{\chi}(X,\beta)$ by Proposition \ref{prop:char echi}. Theorem \ref {thm: mono converge} implies $$\langle(\beta+dd^c\rho+dd^cv)^n\rangle=\lim_{j\rightarrow \infty}\langle(\beta+dd^c\Phi_j)^n\rangle\geq \lim_{j\rightarrow \infty}\min\{f,j\}\nu=\mu.$$
			
			Combining with $\int_X\langle(\beta+dd^c\rho+dd^cv)^n\rangle=\int_X\beta^n=\mu(X)$, we conclude that $$\langle(\beta+dd^c\rho+dd^cv)^n\rangle=\mu.$$
			
			The proof of Theorem \ref{thm: main 2} is complete.

		\end{proof}

	\end{document}